\newsavebox{\largestimage}
\DeclareMathOperator{\expo}{exp}
\DeclareMathOperator{\supp}{supp}
\newcommand{\Cli}{C\ell}
\newcommand{\D}{\mathbb{D}} 
\newcommand{\End}{\mathrm{End}}
\newcommand{\fol}{\mathcal{F}} 
\newcommand{\Hol}{\mathrm{Hol}} 
\newcommand{\N}{\mathbb{N}}
\newcommand{\prin}{\mathrm{prin}} 
\newcommand{\R}{\mathbb{R}} 
\newcommand{\Sp}{\mathbb{S}} 
\newcommand{\Sym}{\mathrm{Sym}^2}
\newcommand{\tr}{\mathrm{tr}}
\newtheorem{theorem}{Theorem}  
\newtheorem{corollary}[theorem]{Corollary}  
	\newtheorem{thm}{Theorem}[section]
	\newtheorem{lemma}[thm]{Lemma}
	\newtheorem{proposition}[thm]{Proposition}
	\newtheorem*{question}{Question}
\theoremstyle{definition}	
	\newtheorem{remark}[thm]{Remark}
	\newtheorem{definition}[thm]{Definition}
 \newtheoremstyle{TheoremNum}
        {\topsep}{\topsep}              
        {\itshape}                      
        {}                              
        {\bfseries}                     
        {.}                             
        { }                             
        {\thmname{#1}\thmnote{ \bfseries #3}}
    \theoremstyle{TheoremNum}
\title[Yamabe problem on Foliations]{Yamabe problem in the presence of singular Riemannian  Foliations}
\keywords{Yamabe problem, singular Riemannian foliation}
\author[D.~Corro]{Diego Corro$^{\ast}$}
\address[D.~CORRO]{Mathematics Institute of the National Autonomous University of Mexico (UNAM), Oaxaca, Mexico.}
\curraddr{Mathematisches Institut, Universität zu Köln, Köln, Deutschland.}
\email{\href{mailto:diego.corro.math@gmail.com}{diego.corro.math@gmail.com}}
\thanks{$^\ast$Supported by DGAPA-Fellowship associated to the  Mathematics Institute of UNAM, campus Oaxaca, and by DFG-Eigenestelle Fellowship CO 2359/1-1}
\author[J.~C.~Fernández]{Juan Carlos Fernandez$^{\ast\ast}$}
\address[J.~C.~FERNÁNDEZ]{Facultad de Ciencias, Universidad Na\-cio\-nal Au\-tó\-no\-ma de México (UNAM), Ciudad de Mé\-xi\-co, Mé\-xi\-co.}
\email{\href{mailto:jcfmor@ciencias.unam.mx}{jcfmor@ciencias.unam.mx}}
\thanks{$^{\ast\ast}$Partially supported by Professor Christina Sormani's NSF Reaserch Grant DMS-1612049}
\author[R.~Perales]{Raquel Perales}
\address[R.~PERALES]{Investigadora por M\'exico at the Mathematics Institute of the National Autonomous University of Mexico (UNAM), Oaxaca, Mexico.}
\email{\href{mailto:raquel.perales@im.unam.mx}{raquel.perales@im.unam.mx}}
\subjclass[2020]{Primary 58J05, 53C12; Secondary 35J61, 35B06, 35J20, 35R01, 53C21, 57R30}
\begin{document}

\overfullrule=2mm

\maketitle
\begin{abstract}
    Using variational methods together with symmetries given by singular Riemannian foliations with positive dimensional leaves, we prove the existence of an infinite number of sign-changing solutions to Yamabe type problems, which are constant along the leaves of the foliation, and one positive solution of minimal energy among any other solution with these symmetries. In particular, we find sign-changing solutions to the Yamabe problem on the round sphere with new qualitative behavior when compared to previous results, that is, these solutions are constant along the leaves of a singular Riemannian foliation which is not induced neither by a group action nor by an isoparametric function. To prove the existence of these solutions, we prove a Sobolev embedding theorem for general singular Riemannian foliations, and a Principle of Symmetric Criticality for the associated energy functional to a Yamabe type problem.
\end{abstract}

\section{Introduction}

The Yamabe problem, stated by Yamabe in \cite{Yamabe1960}, asks if for a given closed Riemannian manifold $(M,g)$ there exists a conformal Riemannian metric $\hat{g} = fg$, for some smooth positive function $f\colon M\to \R$, such that the scalar curvature of $\hat{g}$ is constant. This problem can be written in terms of a PDE, and has been completely solved in the positive by the combined work of Yamabe \cite{Yamabe1960}, Trudinger \cite{Trudinger1968}, Aubin \cite{Aubin1976}, and Schoen \cite{Schoen1984}. Solutions to this problem are not necessarily unique, although for manifolds not conformally equivalent to the round sphere with dimension less than or equal to $24$, the set of solutions is compact (see \cite{KhuriMarquesSchoen2009}). In contrast, for higher dimensions the set of solutions is not compact (see \cite{Brendle2008,BrendleMarques2009}). For an extensive exposition of the Yamabe problem, the interested reader may consult \cite{Aubin1998,LeeParker1987}.

In the past years, sign-changing solutions to the Yamabe equation have been studied. These are functions $u$ which are sign changing and satisfy the Yamabe equation. For $k,n\geq 2$, Ding \cite{Ding1986} established the existence of infinitely many sign-changing solutions for the round $(k+n)$-sphere, using the fact that there is a (linear) $\mathrm{O}(k)\times \mathrm{O}(n)$ action by isometries. Moreover, Ammann and Humbert proved in \cite{AmmannHumbert2006} that in dimension at least $11$ for a closed Riemannian manifold with positive Yamabe invariant and not locally conformally flat, there exists a minimal energy sign-changing solution to the Yamabe equation. 

The study of the existence of sign-changing solutions has been recently carried out by several authors using very different techniques \cite{AmmannHumbert2006,ClappPacella2008,ClappFernandez2017,delPinoMussoPacardPistoia2011,FernandezPetean(2020),Henry2019,Vetois2007}. One of the approaches considered has been to find equivariant solutions with respect to a given compact Lie group action by isometries with positive dimensional orbits. The approach of finding equivariant solutions to the Yamabe equation has also led to an equivariant solution to the Kazdan-Warner problem in \cite{CavenaghiDoOSperanca2021}.

Recently, singular Riemannian foliations have been considered as a notion of symmetry for Riemannian manifolds in the context of manifolds with nonnegative sectional curvature, since they are a natural extension to the concepts of group actions and Riemannian submersions (see for example \cite{Alexandrino,CorroMoreno2020,Galaz-Garcia2015,GeRadeschi2013}). Moreover, several results which hold for group actions that depend only on the  geometry that is transverse  to the orbits can be extended to the setting of singular Riemannian foliations.

In light of this, we study the existence of sign-changing solutions for a family of elliptic partial differential equations related to the  Yamabe problem in the presence of singular Riemannian foliations. 

Namely, let $(M,g)$ be a closed Riemannian manifold of dimension $m\geq 3$ and consider the following Yamabe type problem:
\begin{equation}\label{Main:Yamabe type}
	-\Delta_g u + b u = c \vert u\vert^{p-2}u\tag{Y} \quad\text{on }\ M,
\end{equation}
where $\Delta_g=\text{div}_g\text{grad}_g$ is the Laplace-Beltrami operator, $p>2$, $b,c\in\mathcal{C}^\infty(M)$ with $c>0$. We will assume that the operator $-\Delta_g + b$ is \emph{coercive} in the Sobolev space $H_g^1(M)$, meaning, that there exists $\mu>0$ such that
\begin{equation}\label{eq:coercivity}
\int_M\langle \nabla_g u,\nabla_g v\rangle_g + b\, u v \; dV_g \geq \mu \int_M \langle \nabla_g u,\nabla_g v\rangle_g + uv \; dV_g
\end{equation}
for every $u,v\in H_g^1(M)$.

Let $\mathcal{F}$ be a singular Riemannian foliation on $M$ with closed leaves and nontrivial, meaning that $\fol$ is different to the foliation that consists of only one leaf, $\{M\}$, and to the foliation $\{\{p\}\mid p\in M\}$ (see Section~\ref{Sub: Singular Riemannian foliations} for definitions). We will further assume that all the leaves of $\fol$ have dimension greater than or equal to one. For that, we define the number
\begin{equation}\label{eq:kappa_F}
\kappa_\fol:=\min\{\dim L\mid L\in\mathcal{F}\}.
\end{equation}
We say that a function $u\colon M\to \R$ is $\mathcal{F}$-invariant if $u$ is constant on each leaf $L$ of $\fol$.  

\begin{theorem}\th\label{Theorem Main}
Let $(M,g,\fol)$ be an $m$-dimensional Riemannian manifold, $m\geq 3$, together with a nontrivial closed singular Riemannian foliation such that $\kappa_\fol \geq 1$. Assume that $-\Delta_g + b$ is coercive in $H_g^1(M)$, that $b$ and $c$ are $\fol$-invariant functions, with $c>0$ and $2<p\leq2_m^\ast := \frac{2m}{m-2} $. Then \eqref{Main:Yamabe type} admits an infinite number of $\fol$-invariant  solutions, one of them is positive and has least energy among any other such solutions. That is, it attains \eqref{def:LeastEnergy} given below, and the rest are sign-changing solutions. 
\end{theorem}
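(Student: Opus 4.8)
The plan is to realize the $\fol$-invariant solutions as critical points of the restricted energy functional and to invoke a Principle of Symmetric Criticality to conclude that they solve \eqref{Main:Yamabe type} on all of $M$. Concretely, let
\[
J(u)=\frac12\int_M |\nabla_g u|_g^2 + b\,u^2 \; dV_g - \frac1p\int_M c\,|u|^p \; dV_g
\]
be the energy functional associated to \eqref{Main:Yamabe type}, defined on $H_g^1(M)$; by the coercivity hypothesis \eqref{eq:coercivity} the quadratic part is an equivalent inner product on $H_g^1(M)$. Let $H_{g,\fol}^1(M)\subset H_g^1(M)$ denote the closed subspace of $\fol$-invariant functions. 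The first step is to establish the relevant functional-analytic setup: the Sobolev embedding theorem for singular Riemannian foliations stated earlier in the paper gives that $H_{g,\fol}^1(M)$ embeds \emph{compactly} into $L^p(M)$ for $2<p\leq 2_m^\ast$ — this is the crucial gain over the non-symmetric case, where compactness fails precisely at the critical exponent $p=2_m^\ast$. The second step is the Principle of Symmetric Criticality for $J$: since $b$ and $c$ are $\fol$-invariant, $J$ is invariant under the natural action on functions, and one shows that a critical point of $J|_{H_{g,\fol}^1(M)}$ is a critical point of $J$ on all of $H_g^1(M)$, hence a weak (and then, by elliptic regularity, a classical) solution of \eqref{Main:Yamabe type}.

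Granting these two ingredients, the third step is a standard variational argument carried out on the Nehari manifold
\[
\mathcal{N}_\fol=\{u\in H_{g,\fol}^1(M)\setminus\{0\} : J'(u)u=0\},
\]
restricted to $\fol$-invariant functions. For the positive least-energy solution, one minimizes $J$ over $\mathcal{N}_\fol$: coercivity makes $J$ bounded below on $\mathcal{N}_\fol$ with minimizing sequences bounded in $H_{g,\fol}^1(M)$, and the compact embedding into $L^p$ lets one pass to the limit and obtain a minimizer $u_0$; replacing $u_0$ by $|u_0|$ (which is still $\fol$-invariant and in $\mathcal{N}_\fol$ with the same energy) and applying the strong maximum principle yields a positive solution, attaining the least-energy value
\[
c_\fol=\inf_{u\in\mathcal{N}_\fol} J(u)
\]
(this is the quantity \eqref{def:LeastEnergy} referenced in the statement). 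For the infinitely many sign-changing solutions, one notes that $J$ is even, that $J|_{H_{g,\fol}^1(M)}$ satisfies the Palais--Smale condition thanks once more to the compact embedding, and that $H_{g,\fol}^1(M)$ is infinite-dimensional (since $\fol$ is nontrivial with positive-dimensional leaves, the quotient carries infinitely many linearly independent invariant functions); then the symmetric mountain-pass theorem / Lusternik--Schnirelmann theory produces an unbounded sequence of critical values, hence infinitely many pairs $\pm u_k$ of solutions with $J(u_k)\to\infty$. Since $J(u_k)>c_\fol$ for $k$ large while the least-energy positive solution is unique up to sign among minimizers on $\mathcal{N}_\fol$, all but possibly finitely many of these are distinct from $\pm u_0$, and a sign-changing solution can be extracted (for instance, by minimizing $J$ over the subset of $\mathcal{N}_\fol$ consisting of sign-changing invariant functions, or by observing that the higher mountain-pass levels cannot be attained by solutions of constant sign).

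The main obstacle — and the conceptual heart of the paper — is the compactness at the critical exponent $p=2_m^\ast$: without symmetry the embedding $H_g^1(M)\hookrightarrow L^{2_m^\ast}(M)$ is continuous but not compact, and bubbling can destroy minimizing/Palais--Smale sequences. The whole argument hinges on the foliated Sobolev embedding theorem (proved earlier in the paper) upgrading this to a compact embedding on the $\fol$-invariant subspace, which is where the hypothesis $\kappa_\fol\geq 1$ (positive-dimensional leaves) is essential: it effectively lowers the relevant dimension to $m-\kappa_\fol$, pushing the critical exponent above $p$. A secondary technical point is verifying the Principle of Symmetric Criticality in this setting — the symmetry here is a foliation rather than a group action, so one cannot directly cite Palais's theorem; instead one must argue that the orthogonal complement of $H_{g,\fol}^1(M)$ is mapped into itself by $J'$, which follows from averaging $J'(u)$ along the leaves using the transverse-volume structure of a singular Riemannian foliation. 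Both of these are supplied by the earlier sections, so the remainder is a by-now-classical application of critical-point theory on the Nehari manifold together with elliptic regularity and the maximum principle.
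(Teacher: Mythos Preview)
Your overall architecture matches the paper's: reduce to critical points of $J$ on $H_g^1(M)^\fol$ via the Principle of Symmetric Criticality, use the foliated Sobolev embedding to recover compactness (hence Palais--Smale) even at $p=2_m^\ast$, and then run critical-point theory. The construction of the positive least-energy solution by minimizing on the Nehari manifold is essentially what the paper does inside the proof of its Variational Principle (the paper argues that a minimizer cannot change sign because otherwise $u^+,u^-\in\mathcal{N}_g^\fol$ would give $\tau_g^\fol\geq 2\tau_g^\fol$, rather than replacing $u_0$ by $|u_0|$, but either works).

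The real discrepancy is in the sign-changing part. The symmetric mountain-pass / fountain argument you invoke produces an unbounded sequence of critical values, but it does \emph{not} tell you that the corresponding critical points change sign; nothing you have said rules out infinitely many distinct positive (or negative) $\fol$-invariant solutions. Your two escape hatches are both incomplete: minimizing $J$ over the sign-changing part of $\mathcal{N}_g^\fol$ does not obviously yield a critical point of $J$ (that set is not a natural constraint, and the limit of a minimizing sequence could lie on its boundary $\{u^+=0\}\cup\{u^-=0\}$), and the assertion that ``higher mountain-pass levels cannot be attained by solutions of constant sign'' is exactly the nontrivial point that needs proof.

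This is why the paper does not use the plain symmetric mountain-pass theorem. Instead it invokes the Clapp--Pacella variational principle (Theorem~\ref{Theorem:Variational Principle}): one shows that neighborhoods $\overline B_\alpha(\pm\mathcal{P}^\fol)$ of the cones of nonnegative/nonpositive functions are positively invariant under the negative $\langle\cdot,\cdot\rangle_\theta$-gradient flow of $J$, and then measures a \emph{relative genus} $\mathfrak{g}(\mathcal{D}_d^\fol,\mathcal{D}_0^\fol)$ with respect to these cones. The resulting min-max values are attained by critical points lying \emph{outside} $\overline B_\alpha(\mathcal{P}^\fol)\cup\overline B_\alpha(-\mathcal{P}^\fol)$, hence are genuinely sign-changing. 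Feeding in, for each $k$, the $k$-dimensional subspace spanned by $\fol$-invariant bump functions with disjoint supports (this is where Theorem~\ref{Lemma:Infinite dimensional} is used) gives $k-1$ pairs of sign-changing critical points, and letting $k\to\infty$ yields infinitely many. Your sketch is correct up to the point where you need to certify the sign-changing property; closing that gap requires precisely this extra flow-invariance/relative-genus machinery, which is the substantive content of Section~\ref{Section:Variational Principle}.
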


\begin{remark}
We point out that if \eqref{Main:Yamabe type} admits a constant function as solution, then this is a foliated solution and hence is obtained applying \th\ref{Theorem Main}. In case \eqref{Main:Yamabe type} does not admit a constant function as solution, then the solution given by \th\ref{Theorem Main} is a solution with non trivial geometry.
\end{remark}

\begin{remark}\th\label{R: b constant implies coercive}
Observe that when $b$ is a positive function then the operator $-\Delta_g+b$ is coercive, since we can take 
\[
\mu = \min\{1,\inf\{b(x)\mid x\in M\}\}.
\]
\end{remark}

As an application of \thref{Theorem Main} and \th\ref{R: b constant implies coercive} we give new sign changing solutions to the Yamabe problem and a positive minimal energy solution on any compact manifold of positive scalar curvature that admits a singular foliation:

\begin{corollary}\th\label{Main Corollary Manifolds}
Let $\fol$ be a nontrivial singular Riemannian foliation with closed leaves on a compact Riemannian manifold $(M,g)$.  Assume that $1\leqslant \kappa_\fol$, $c>0$ is constant and that $b>0$ is $\fol$-invariant. Then there exists an infinite number of sign changing $\fol$-invariant solutions to  \eqref{Main:Yamabe type}. In particular, this is true for the Yamabe problem:
\begin{equation}
	-\Delta_g u + S_g u = c \vert u\vert^{\frac{4}{m-2}}u \quad\text{on }\ M,
\end{equation}
provided that the scalar curvature of $(M,g)$, $b=S_g>0$ is $\fol$-invariant. 
\end{corollary}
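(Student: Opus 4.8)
The plan is to obtain \thref{Main Corollary Manifolds} as a direct specialization of \thref{Theorem Main}. First I would verify the coercivity hypothesis: since $b$ is a $\fol$-invariant function with $b>0$ on the compact manifold $M$, the infimum $\inf_{x\in M}b(x)$ is attained and positive, so by \thref{R: b constant implies coercive} the operator $-\Delta_g+b$ is coercive on $H^1_g(M)$ with constant $\mu=\min\{1,\inf_M b\}>0$. The coefficient $c$ is a positive constant, hence in particular $\fol$-invariant, and $\fol$ is a nontrivial closed singular Riemannian foliation with $\kappa_\fol\geq 1$ --- exactly the standing hypotheses of \thref{Theorem Main}.

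Next, for any exponent $p$ with $2<p\leq 2_m^\ast$, \thref{Theorem Main} applies and produces an infinite family of $\fol$-invariant solutions of \eqref{Main:Yamabe type}; among them exactly one is positive (the one of least energy among $\fol$-invariant solutions) and all the remaining ones are sign-changing. Discarding the single positive solution from this infinite set leaves an infinite family of sign-changing $\fol$-invariant solutions, which is the first assertion of the corollary.

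Finally, to recover the Yamabe equation I would choose the critical exponent $p=2_m^\ast=\frac{2m}{m-2}$, for which $p-2=\frac{4}{m-2}$, and take $b=S_g$. Under the stated assumption that $S_g$ is $\fol$-invariant and $S_g>0$, the two previous paragraphs apply verbatim, yielding infinitely many sign-changing $\fol$-invariant solutions of $-\Delta_g u+S_g u=c\,\vert u\vert^{\frac{4}{m-2}}u$. I do not anticipate a genuine obstacle here: the whole argument is bookkeeping on top of \thref{Theorem Main}, the only points needing a word being that positivity of a $\fol$-invariant $b$ on a compact manifold forces coercivity (handled by \thref{R: b constant implies coercive}) and that the Yamabe nonlinearity is precisely the critical power $p=2_m^\ast$.
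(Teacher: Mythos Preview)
Your proposal is correct and matches the paper's approach: the corollary is stated as a direct application of \thref{Theorem Main} together with \thref{R: b constant implies coercive}, and your argument carries out precisely that specialization, including the identification of the Yamabe equation with the critical case $p=2_m^\ast$.
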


Observe that for a singular Riemannian foliation with positive dimensional leaves with respect to a Riemannian metric of constant scalar curvature all the conditions in  \th\ref{Main Corollary Manifolds} are satisfied. Due to the fact that the Yamabe problem has been extensively studied for the unit sphere in $\R^n$, and the extensive examples of singular Riemannian foliations for this manifold (see Section~\ref{SS: Examples SRF}) we restate the previous corollary specifically for the case of the unit sphere:

\begin{corollary}\th\label{Main Corollary}
If $c>0$ is constant, then for any nontrivial singular Riemannian foliation $\fol$ on the round sphere $(\Sp^m,g)$ with $1\leq\kappa_\fol$, there exist an infinite number of sign changing $\fol$-invariant solutions to the Yamabe problem: 
\begin{equation}
	-\Delta_g u + \frac{m(m-1)}{4} u = c \vert u\vert^{\frac{4}{m-2}}u \quad\text{on }\ \Sp^m.
\end{equation}
\end{corollary}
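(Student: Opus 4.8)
The plan is to derive Corollary~\ref{Main Corollary} as a direct specialization of Corollary~\ref{Main Corollary Manifolds} to the case $(M,g)=(\Sp^m,g_{\mathrm{round}})$. First I would observe that the round sphere has constant scalar curvature $S_g = m(m-1)$, so setting $b = S_g = m(m-1)$ and recalling that the conformal Laplacian exponent gives $2_m^\ast = \frac{2m}{m-2}$, the Yamabe equation in the statement is exactly \eqref{Main:Yamabe type} with $p = 2_m^\ast$ and $c>0$ the prescribed constant. Since $b = m(m-1) > 0$ is a (globally) constant function, it is trivially $\fol$-invariant for any foliation $\fol$, and by Remark~\ref{R: b constant implies coercive} the operator $-\Delta_g + b$ is coercive in $H^1_g(\Sp^m)$ with $\mu = \min\{1, m(m-1)\} = 1$ since $m \geq 3$. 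Likewise, the constant function $c>0$ is $\fol$-invariant.

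The only hypothesis of Corollary~\ref{Main Corollary Manifolds} left to check is that $\fol$ is a nontrivial singular Riemannian foliation with closed leaves and $\kappa_\fol \geq 1$; but this is precisely what is assumed in the statement of Corollary~\ref{Main Corollary}. I note that the statement of Corollary~\ref{Main Corollary} writes the coefficient as $\frac{m(m-1)}{4}$ rather than $m(m-1)$ — if that normalization of the round metric (scalar curvature normalized so that $S_g = \frac{m(m-1)}{4}$, i.e.\ the sphere of radius $2$, or equivalently the standard conformal-Laplacian normalization with the factor $\frac{4(m-1)}{m-2}$ absorbed) is the one in force, one simply takes $b = \frac{m(m-1)}{4}$, which is again a positive constant, and the argument is unchanged. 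Either way, all hypotheses of Corollary~\ref{Main Corollary Manifolds} are met.

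Applying Corollary~\ref{Main Corollary Manifolds} then yields an infinite family of sign-changing $\fol$-invariant solutions (together with a positive least-energy $\fol$-invariant solution via Theorem~\ref{Theorem Main}), which is exactly the assertion. There is essentially no obstacle here beyond bookkeeping: the entire content has been placed in Theorem~\ref{Theorem Main} and Corollary~\ref{Main Corollary Manifolds}, and the point of stating Corollary~\ref{Main Corollary} separately is to emphasize, as the surrounding text does, that the sphere $\Sp^m$ carries singular Riemannian foliations (see Section~\ref{SS: Examples SRF}) that arise neither from isometric group actions nor from isoparametric functions, so that the resulting solutions exhibit genuinely new qualitative behavior compared with the constructions of Ding and others. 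If one wished to say more, one could append a sentence recalling a concrete such $\fol$ from the examples section to make the corollary non-vacuous, but strictly for the proof it suffices to invoke the preceding corollary.
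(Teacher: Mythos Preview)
Your proposal is correct and matches the paper's own treatment: the paper gives no separate proof of Corollary~\ref{Main Corollary}, presenting it explicitly as a restatement of Corollary~\ref{Main Corollary Manifolds} for the round sphere, and your verification of the hypotheses (positive constant $b$, hence coercive and $\fol$-invariant; constant $c>0$; the foliation assumptions) is exactly what is needed. Your remark about the normalization of the constant $\tfrac{m(m-1)}{4}$ is apt and does not affect the argument, since any positive constant $b$ suffices.
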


\th\ref{Theorem Main} and  Corollaries \ref{Main Corollary Manifolds} and \ref{Main Corollary} directly expand the results obtained in \cite{ClappFernandez2017,FernandezPetean(2020),FernandezPalmasPetean(2020)}. In \cite{ClappFernandez2017} solutions to \eqref{Main:Yamabe type} were found for foliations arising from closed subgroups of isometries acting on a given Riemannian manifold, meanwhile  \cite{FernandezPetean(2020),FernandezPalmasPetean(2020)} provided solutions for foliations arising from isoparametric functions (codimension one foliations). We point out that by the work of Radeschi \cite{Radeschi2014} and of Farrell and Wu \cite{FarrellWu2018}, the notion of a singular Riemannian foliation is more general than the one of a group action and a Riemannian submersion; that is, there are singular Riemannian foliations of arbitrary dimension which cannot arise from a group action nor from a Riemannian submersion (see Subsection~\ref{SS: Examples SRF} for a description of Radeschi's examples). Hence \th\ref{Theorem Main} gives a strict generalization of the current literature in finding sign-changing solutions to problem \eqref{Main:Yamabe type}.

We stress out that \th\ref{Theorem Main} gives an $\fol$-invariant solution with minimal energy  among any other $\fol$-invariant solution. In the absence of symmetries, even if the Yamabe invariant is always attained, problem \eqref{Main:Yamabe type} may not have a ground state. That is, there may not be a solution with minimal energy, as it was shown for instance in  \cite[Theorem~1.5]{ClappFernandez2017}. For the case when $b$ equals the scalar curvature of $M$ it is not clear if the energy minimizing solutions  from \th\ref{Main Corollary Manifolds} attain the Yamabe constant.

Recently in \cite{ClappPistoia2021,ClappSaldanaSzulkin2019}, given a compact Lie group $G$  acting by isometries with positive dimensional orbits, the authors give $G$-invariant solutions to the Yamabe problem which have minimal energy among any other $G$-invariant sign-changing solutions with a fixed number of nodal domains,  by showing the existence of regular optimal $G$-invariant partitions for an arbitrary number of components. This is a more controlled way of finding sign-changing solutions to the Yamabe equation, and highlights the noncompactness of the set of sign-changing solutions. We point out that for singular Riemannian foliations given by group actions it is not clear if there exists a relation between the solutions in  \cite{ClappPistoia2021,ClappSaldanaSzulkin2019} and the ones given by \th\ref{Theorem Main}.

To prove \th\ref{Theorem Main} we state and prove a Rellich–Kondrachov embedding theorem for the subspace of foliated Sobolev functions in \th\ref{Theorem:Sobolev Embedding} below, and a Principle of Symmetric criticality for the energy functional associated to problem \eqref{Main:Yamabe type} in \th\ref{Theorem:Symmetric Criticality}. This is a generalization of Palais' Principle of Symmetric criticality \cite{Palais1979}, and also generalizes the work of Henry \cite{Henry2019} in codimension one foliations. We point out that \th\ref{Theorem:Sobolev Embedding} has been proven independently by Alexandrino and Cavenaghi \cite{AlexandrinoCavenaghi2021}. Nonetheless there are examples of foliations for which \th\ref{Theorem:Symmetric Criticality} still holds (see Section~\ref{SS: Examples SRF}) but not the one in \cite{AlexandrinoCavenaghi2021}. We point out that a general statement of the Palais' Principle of Symmetric criticality for an arbitrary foliated functional is probably false in general. Nonetheless \th\ref{Theorem:Symmetric Criticality} can be applied to the classical Yamabe equation in the context of the foliated Kazdan-Warner problem, that is, finding Riemannian metrics with scalar curvature equal to a prescribed $\fol$-invariant function for a singular Riemannian foliation $(M,\fol)$ as in \cite[Section~2]{CavenaghiDoOSperanca2021}. This problem has also been considered for regular Riemannian foliations on closed manifolds in \cite{WangZhang2013}.

When we have two singular Riemannian foliations $\fol_1$ and $\fol_2$ on a fixed Riemannian manifold $(M,g)$, it is difficult to know if there is a relation between the $\fol_1$-invariant and the $\fol_2$-invariant sign-changing solutions to \eqref{Main:Yamabe type} given by our method. We say that $\fol_1\subset \fol_2$ if for any leaf $L_1\in \fol_1$ there exists a leaf $L_2\in \fol_2$ such that $L_1\subset L_2$. Thus we have the following question:

\begin{question}
Consider a fixed Riemannian manifold $(M,g)$ and two singular Riemannian foliations $\fol_1$ and $\fol_2$ with respect to $g$ such that $\fol_1\subset \fol_2$. Does there exist a solution to  \eqref{Main:Yamabe type} which is $\fol_1$-invariant but  not $\fol_2$-invariant?
\end{question}

As a particular example, note that for a fixed Riemannian manifold $(M,g)$ and a  given  group action by a compact Lie group $G$ via isometries on $(M,g)$, and  a fixed closed subgroup $H\subset G$, then the $H$-orbits are contained in the $G$-orbits. That is we have $\fol_H = \{H(p)\mid p\in M\}\subset \fol_G = \{G(p)\mid p\in M\}$. Even in this homogeneous setting it is not clear how to answer the previous question. Although due to the work of Galaz-Garcia, Kell, Mondino, Sosa \cite[Theorem~5.12]{Galaz-GarciaKellMondinoSosa2018} the subspace of $G$-invariant Sobolev functions $H^1_g(M)^G\subset H^1_g(M)$ is isometric to the Sobolev space $H^1(M/G,\mu^\ast_G)$, where $\mu^\ast_G$ is the pushfoward measure of the Riemannian volume  $V_g$ via quotient the map $M\to M/G$. Thus from our proof it is clear that if $H^1(M/H,\mu^\ast_H)$ is isometric to $H^1(M/G,\mu^\ast_G)$ then we have a negative answer to the question posted above. That is, the $H$-invariant sign-changing solutions given by \th\ref{Theorem Main} are the same as the $G$-invariant solutions given by the theorem.

Our work is organized as follows. In Section~\ref{Sec: Variational Setting} we give the proof of \th\ref{Theorem Main} together with all the necessary concepts to write the proof. In Section~\ref{sec:Preliminaries Foliations} we present some preliminaries of singular Riemannian foliations, give a proof of \th\ref{Lemma:Infinite dimensional}, which says that $H^1_g(M)^\fol$ is an infinite dimensional subspace of $H_g^1(M)$, and finalize presenting examples of singular Riemannian foliations which are not coming from group actions or induced by isoparametric functions (i.e. are codimension one foliations). In Section~\ref{Sec:Symmetryc Criticality} we prove \th\ref{Theorem:Symmetric Criticality}, which is a kind of Principle of Symmetric Criticality for the energy functional associated to our problem, $J$. In Section~\ref{Sec:Compactness} we give a proof of \th\ref{Theorem:Sobolev Embedding} which is a foliated version of the Sobolev and Rellich–Kondrachov embedding theorems. We also prove \th\ref{Theorem:Compactness} which states that $J$ satisfies some Palais-Smale condition. We end with Section~\ref{Section:Variational Principle}, where, employing a variational principle for sign-changing solutions, we show the existence of arbitrarily large number of  critical points for the energy functional in $H^1_g(M)^\fol$ (\th\ref{Theorem:Variational Principle}), which due to \th\ref{Theorem:Symmetric Criticality} are the sign-changing solutions to \eqref{Main:Yamabe type}.

\section*{Acknowledgments}

We thank Monica Clapp for useful conversations about the question posted in the introduction. We also thank Jimmy Petean for useful comments.

\section{Proof of Theorem \ref{Theorem Main}}\label{Sec: Variational Setting}

In this section we briefly state some definitions and results needed to prove our main theorem and we conclude the section by giving its proof.

\subsection{Some words about singular Riemannian foliations}\label{Sub: Singular Riemannian foliations}

Here we define some basic things of singular Riemannian foliations, and provide more details in Section \ref{sec:Preliminaries Foliations}.

\bigskip 
In what follows, $(M,g)$ will be a closed (compact and without boundary) Riemannian manifold of dimension $m\geq 3$. By a \emph{singular Riemannian foliation} $\mathcal{F}$ of $(M,g)$ we mean a decomposition of $M$ into connected injectively immersed submanifolds, called \emph{leaves}, which may have different dimensions, satisfying:
\begin{enumerate}[label = (\roman*)]
	\item $\mathcal{F}$ is a transnormal system, i.e., every geodesic orthogonal to one leaf remains orthogonal to all the leaves that it intersects.
	\item $\mathcal{F}$ is smooth, that is, for every leaf $L\in\mathcal{F}$ and $p \in L$, 
	\[
	T_pL= \mathrm{span}\{ X_p\mid X\in\mathcal{X}_\mathcal{F} \},
	\]
	where $\mathcal{X}_\mathcal{F}$ is the module of smooth vector fields on $M$ which are everywhere tangent to the leaves of $\mathcal{F}$. 
\end{enumerate}

\begin{remark}
Observe that for a compact leaf, an injective immersion is the same as an embedding. Thus for foliations whose leaves are all compact, we consider the leaves embedded in $M$.
\end{remark}

\begin{remark}
It is conjectured that (i) implies (ii), that is, a transnormal system is a smooth foliation. For some discussion on this conjecture see \cite[Final Remarks (c)]{Wilking2007}.  
\end{remark}

\begin{remark}
Condition (i) is independent of (ii), since there are singular foliations satisfying (ii) but not (i). For example, in \cite{Sullivan1976} a $5$-dimensional manifold with a smooth foliation by $1$-dimensional circles, that does not have finite holonomy is given. By \cite[Theorem~2.6]{Moerdijk} any foliation with compact leaves,  all of the same dimension, which admits a Riemannian metric satisfying (i) has finite holonomy. Thus, the smooth foliation given in  \cite{Sullivan1976} does not admit a Riemannian metric satisfying (i). 
\end{remark}

The leaves of maximal dimension are called \emph{regular} and the other ones \emph{singular}. If all the leaves are regular, then $\mathcal{F}$ is called a \emph{regular Riemannian foliation}. The \emph{dimension} of $\mathcal{F}$, denoted by $\dim\mathcal{F}$, is the dimension of the regular leaves and the \emph{codimension of $\fol$} is equal to $\dim M - \dim\fol$.

\subsection{Variational Setting}
Let $(M,g)$ be a closed Riemannian manifold of dimension $m\geq 3$, $p\in(2,2^\ast)$, and let $\fol$ be a singular Riemannian foliation on $M$. In what follows, $\langle \cdot,\cdot\rangle_g$ will denote the Riemannian metric in $M$, while $\nabla_g u$ will denote the gradient  with respect to $g$ of a smooth function $u\colon M\to\R$. The Sobolev space $H_g^1(M)$ is the closure of $\mathcal{C}^\infty(M)$ under the norm, $\Vert \cdot \Vert_{H^1_g(M)}$, induced by the interior product
\begin{equation}\label{Eq:Sobolev Norm}
\langle u,v \rangle_{H^1_g(M)}:=\int_{M} \langle \nabla_g u,\nabla_g v\rangle_g + uv\;dV_g,\quad u,v\in\mathcal{C}^\infty(M).
\end{equation}
By density, this bilinear form extends to a continuous symmetric bilinear form on $H_g^1(M)\times H_g^1(M).$ Even if, formally, $\langle\nabla_g u,\nabla_g v\rangle_g$ is not defined for an arbitrary weak gradient, we will understand \eqref{Eq:Sobolev Norm} as a limit and conserve the notation on the right hand side for every $u,v\in H_g^1(M)$.

Fix two functions $b, c \in C^\infty(M)$ with $c>0$ and suppose that the operator $-\Delta+b$ is coercive
(see \eqref{eq:coercivity}). Since $-\Delta+b$ is coercive, the bilinear product
\begin{equation}\label{eq-innerprod-b}
\langle u, v\rangle_b:= \int_M \langle \nabla_g u,\nabla_g v \rangle_g + b\, uv\ dV_g,\qquad u,v\in H_g^1(M)
\end{equation}
is definite positive. Thus, it is an inner product and consequently induces a norm in $H_g^1(M)$, which furthermore is 
equivalent to the norm $\Vert \cdot \Vert_{H^1_g(M)}$ given that
$M$ is compact, $b$ is continuous and $-\Delta+b$ is coercive.

Since $c$ is positive and continuous and $M$ is compact, we also have a norm in the $L^p_g(M)$ space,
\begin{equation}\label{eq-norm-cp}
\vert u\vert_{c,p}:= \left(\int_M c\vert u\vert^p \; dV_g\right)^{1/p} \quad u \in L^p_g(M), 
\end{equation}
equivalent to the standard norm of $L^p_g(M)$. Indeed, denoting by $K$ the maximum of $c$, and by $k$ its minimum, we have that $K\geqslant k >0$, and thus
\[
    k^{1/p}\Vert u\Vert_{L^p_g(M)} \leqslant \vert u\vert_{c,p} \leqslant K^{1/p}\Vert u\Vert_{L^p_g(M)}.
\]

The energy functional $J \colon H^1_g(M)\to\R$ given by
\[
J(u):= \frac{1}{2}\Vert u\Vert_b^2 - \frac{1}{p}\vert u\vert_{c,p}^p
\]
is a well defined $C^2$ functional for all $p\in(2,2^\ast_m]$, and its derivative is given by
\begin{equation}\label{Eq:Derivative J}
J'(u)v = \langle u,v \rangle_b - \int_M c \vert u\vert^{p-2}uv\, dV_g,\qquad u,v\in H^1_g(M). 
\end{equation}
Moreover, $u \in H^1_g(M)$ is a
critical point of $J$ if and only if $u$ is a solution to \eqref{Main:Yamabe type}. The nontrivial critical points of $J$ lie in the Nehari manifold
\begin{linenomath}
\begin{equation}\label{eq-defNehari}
\begin{split}
   \mathcal{N}_g &:= \{u\in H_g^1(M)\mid u\neq 0 , J'(u)u=0\}\\
   &= \{u\in H_g^1(M)\mid u\neq 0 , \Vert u\Vert_b^2 = \vert u\vert_{c,p}^p\}, 
\end{split}
\end{equation}
\end{linenomath}
which is a closed $C^1$ Hilbert submanifold of $H^1_g(M)$ of codimension one and radially diffeomorphic to $\Sp_{H_g^1(M)}$, the unit sphere in $H^1_g(M)$ with respect to the norm $\Vert\cdot\Vert_b$. Concretely, for each $u\in H_{g}^1(M)\smallsetminus\{0\}$, there exists a unique $t_u\in(0,\infty)$ such that $t_u u\in \mathcal{N}_g$, and it is given explicitly by
\[
t_u := \left(\frac{\Vert u\Vert_b^2}{\vert u\vert_{c,p}^p}\right)^{1/(p-2)}. 
\]
 Hence, we can define a projection $\sigma\colon H_g^1(M)\smallsetminus\{0\}\to \mathcal{N}_g$ given by
\begin{equation}\label{eq: Projection Nehari}
\sigma(u) := t_u u = \left(\frac{\Vert u\Vert_b^2}{\vert u\vert_{c,p}^p}\right)^{1/(p-2)} u,
\end{equation}
which induces the radial diffeomorphism $\sigma\colon\Sp_{H_g^1(M)}\to \mathcal{N}_g$ 
with inverse $u\mapsto \frac{u}{\Vert u\Vert_b}$.  In addition, if $u\in\mathcal{N}_g$, then 
\begin{equation}\label{Eq:Nehari maximum}
    J(u)=\max\{J(tu)\mid t\geqslant 0\},
\end{equation}
(see \cite[Lemma 4.1]{Willem1996}).

For any singular Riemannian foliation, define the space
\[
\mathcal{C}^\infty(M)^\fol:=\{u\in\mathcal{C}^\infty(M)\mid u \text{ is constant on } L \text{ for any }L\in\fol\}
\]
and let $H_g^1(M)^\fol$ be the closure of $\mathcal{C}^\infty(M)^\fol$ under the norm $\Vert\cdot\Vert_{H^1_g(M)}$. 
Hence $H_g^1(M)^\fol$ is a closed subspace of $H_g^1(M)$ and $u\in H_g^1(M)^\fol$ if and only if $u\in H_g^1(M)$ and is  $dV_g$-a.e. constant on the leaves of $\fol$.

For $1\leqslant \kappa_\fol<m$ with $\kappa_\fol$ defined as in \eqref{eq:kappa_F} we show in \th\ref{Lemma:Infinite dimensional} that $H^1_g(M)^\fol$ is an infinite dimensional Hilbert space endowed with the restriction of the interior product $\langle\cdot,\cdot \rangle_{H^1_g(M)}$. The critical points of $J$ that are contained in $H_g^1(M)^\fol$ will be called $\fol$-invariant critical points of $J$. Hence, the nontrivial $\fol$-invariant critical points of $J$ lie in the restricted Nehari manifold
\[
\mathcal{N}_g^\fol:=\mathcal{N}_g\cap H_g^1(M)^\fol, 
\]
which is nonempty by virtue of Theorem \ref{Lemma:Infinite dimensional} below and the definition of the projection \eqref{eq: Projection Nehari}. 
The least energy of the functions contained in the restricted Nehari manifold is given by 
\begin{equation}\label{def:LeastEnergy}
\tau_g^\fol := \inf \{ J(u) \mid u \in \mathcal{N}_g^\fol\},    
\end{equation}
which is a positive number (see \cite[Theorem 4.2]{Willem1996}).
We recall that the positive solution obtained in \th\ref{Theorem Main} will attain this number, and thus it will be a solution of least energy.

\subsection{Main results to prove Theorem~\ref{Theorem Main} and proof of Theorem~\ref{Theorem Main}}\label{Sec:Main Results}
The main ingredients for the proof of \th\ref{Theorem Main} are 
\th\ref{Lemma:Infinite dimensional}, a compactness result (\th\ref{Theorem:Compactness}), a Principle of Symmetric Criticality for the functional $J$ restricted to the space $H_g^1(M)^\fol$ (\th\ref{Theorem:Symmetric Criticality}), and a variational principle (\th\ref{Theorem:Variational Principle}), which we state below. 

\bigskip
First we notice that $H_g^1(M)^\fol$ is an infinite dimensional space, see Section~\ref{sec:Preliminaries Foliations} for the proof. Recall that $\kappa_\fol:=\min\{\dim L\mid L\in\mathcal{F}\}$.

\begin{theorem}\th\label{Lemma:Infinite dimensional}
Let $(M,g,\fol)$ be an $m$-dimensional closed Riemannian manifold together with a nontrivial singular Riemannian foliation with $1\leq \kappa_\fol$. Then, for any $k\in\mathbb{N}$, there exist nontrivial nonnegative functions $u_1,\ldots,u_k\in\mathcal{C}^\infty(M)^\fol$ with pairwise disjoint supports. In particular, it follows that $H_g^1(M)^\fol$ is infinite dimensional.
\end{theorem}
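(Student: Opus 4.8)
The goal is to produce, for any $k$, a family of $k$ nontrivial nonnegative $\fol$-invariant smooth functions with pairwise disjoint supports; the infinite-dimensionality of $H^1_g(M)^\fol$ then follows immediately since such functions are linearly independent (their supports are disjoint) and lie in $\mathcal{C}^\infty(M)^\fol\subset H^1_g(M)^\fol$. The natural approach is to pass to the leaf space $M/\fol$, build $k$ disjoint "bump" regions there, pull them back to $M$, and smooth them out while preserving $\fol$-invariance.

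\smallskip
\textbf{Step 1: a well-behaved $\fol$-invariant function to separate points.} Since $\fol$ is nontrivial, there exist at least two distinct leaves, so the leaf space $M/\fol$ (with the quotient topology) is a nontrivial metrizable space; in fact the quotient metric $\dist_{M/\fol}([x],[y]) := \dist_M(L_x, L_y)$ makes $M/\fol$ a metric space, and the projection $\pi\colon M\to M/\fol$ is continuous, proper (leaves are closed, $M$ compact) and $1$-Lipschitz. The key observation is that the function $x\mapsto \dist_M(x, L)$, for a fixed leaf $L$, is $\fol$-invariant: this is because $\fol$ is a transnormal system, so distance to a leaf is constant along leaves (a standard fact recorded in Section~\ref{sec:Preliminaries Foliations}). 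Thus $M/\fol$ carries genuinely nonconstant continuous $\fol$-invariant functions coming from distance functions on $M$.

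\smallskip
\textbf{Step 2: carve out $k$ disjoint regions.} Pick a leaf $L_0$ that is not all of $M$ and a point $q\notin L_0$; then $\rho(x):=\dist_M(x,L_0)$ is a continuous $\fol$-invariant function with $\rho\equiv 0$ on $L_0$ and $\rho(q)=:R>0$. Choose $k$ disjoint closed intervals $[a_i,b_i]\subset (0,R)$, $i=1,\dots,k$, and set $V_i:=\rho^{-1}\big((a_i,b_i)\big)$. These are open, nonempty (intermediate value theorem applied along a minimizing geodesic from $L_0$ to $q$, which meets every level set of $\rho$ between $0$ and $R$), pairwise disjoint, and saturated (unions of leaves) since $\rho$ is $\fol$-invariant.

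\smallskip
\textbf{Step 3: produce smooth $\fol$-invariant bumps.} On each $V_i$ we want a nontrivial nonnegative $u_i\in\mathcal{C}^\infty(M)^\fol$ supported in $\overline{V_i}$. The cleanest route is to compose: take $\varphi_i\in\mathcal{C}^\infty(\R)$ nonnegative, supported in $(a_i,b_i)$, not identically zero, and consider $\varphi_i\circ\rho$. This is automatically $\fol$-invariant with support in $\overline{V_i}$, but $\rho$ is only Lipschitz, not smooth, so $\varphi_i\circ\rho$ need not be smooth. To fix this one can either (a) invoke a smoothing procedure for $\fol$-invariant functions — averaging a smooth bump over leaves, or using that $\mathcal{C}^\infty(M)^\fol$ is dense in the continuous $\fol$-invariant functions, a point addressed in Section~\ref{sec:Preliminaries Foliations} — or (b) replace the single distance function by a smooth basic function: the distance-squared $\rho^2$ is smooth near $L_0$ on a tubular neighborhood (the normal exponential map is a diffeomorphism there and $\fol$ restricted to it is described by the isometric action of the holonomy on the slice), and it is $\fol$-invariant; restricting attention to a small tube around $L_0$ and rescaling, one gets a smooth $\fol$-invariant $\tilde\rho$ with enough range to run Step 2. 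Then $\varphi_i\circ\tilde\rho$ is genuinely in $\mathcal{C}^\infty(M)^\fol$, nonnegative, nontrivial, and the supports are pairwise disjoint by the disjointness of the $[a_i,b_i]$.

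\smallskip
\textbf{Main obstacle.} The only real subtlety is the regularity in Step 3: distance functions to leaves are merely Lipschitz, and one must produce honestly \emph{smooth} $\fol$-invariant functions. I expect the paper handles this via the structure of a tubular neighborhood of a leaf (where the foliation is linearized and a smooth invariant "radius" function exists) together with a leaf-averaging/partition-of-unity argument to globalize, or by citing the density of $\mathcal{C}^\infty(M)^\fol$ in continuous basic functions. The topological/combinatorial part — getting $k$ disjoint saturated open sets — is elementary once one has any nonconstant continuous $\fol$-invariant function, which the nontriviality hypothesis and transnormality guarantee.
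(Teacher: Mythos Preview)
Your approach is correct and genuinely different from the paper's. You work directly on $M$: fix a leaf $L_0$, use the $\fol$-invariant distance function $\rho=\dist(\cdot,L_0)$ (transnormality makes this basic), slice its range into $k$ disjoint intervals to get $k$ disjoint saturated open sets, and then resolve the smoothness issue by restricting to a tubular neighborhood of $L_0$ where $\rho^2$ is a genuine smooth $\fol$-invariant function, so that $\varphi_i\circ\rho^2$ with $\varphi_i\in\mathcal{C}^\infty_c\bigl((a_i,b_i)\bigr)$ and $b_i$ below the tube radius is smooth, nonnegative, $\fol$-invariant, and compactly supported in the $i$-th shell. The paper instead passes to the leaf space: it uses that the principal stratum $M_{\prin}\to M_{\prin}^\ast$ is a Riemannian submersion onto a smooth manifold, pulls back a smooth partition of unity from $M_{\prin}^\ast$ subordinate to a cover with $k$ disjoint members, and extends by zero off $M_{\prin}$. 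Your route is more elementary and self-contained---it needs only the tubular-neighborhood description of a single leaf and avoids the structure theory of the principal stratum---while the paper's route is more structural and yields a full $\fol$-invariant partition of unity as a by-product, which they in fact need elsewhere (\th\ref{L: Existence of foliated Partition of unity}). Your option (a) as stated is too vague to count as a proof, but option (b) is a clean and complete argument.
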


Given $\tau \in \mathbb R$, we say that a sequence $(u_n)_{n\in\mathbb{N}}$ in $H_g^1(M)^\fol$ is a \emph{$(PS)^\fol_\tau$-sequence for $J$} if $J(u_n)\to \tau$ and $J'(u_n)\to 0$ in $(H_g^1(M)^\fol)^{\ast}$, where the last space is the \emph{dual of $H_g^1(M)^\fol$}. We say that \emph{$J$ satisfies the $(PS)^\fol_\tau$-condition in $H_g^1(M)^\fol$} if every 
$(PS)^\fol_\tau$-sequence for $J$ has a strongly convergent subsequence in $H^1_g(M)$.  With these definitions, the statement of the compactness results is as follows.

\begin{theorem}[Compactness]\th\label{Theorem:Compactness}
Under the hypotheses of \th\ref{Theorem Main}, except that we also allow $b$ and $c$ to be non foliated, the functional $J$ satisfies the $(PS)^\fol_\tau$-condition for every  $\tau\in\R$.
\end{theorem}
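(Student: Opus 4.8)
The plan is to run the classical three-step scheme for Palais--Smale sequences of a ``subcritical-type'' functional --- boundedness, weak limit, strong convergence --- where the only non-routine input is the foliated Rellich--Kondrachov theorem \th\ref{Theorem:Sobolev Embedding}. Note that foliation-invariance of $b$ and $c$ will play no role anywhere below, which is exactly why the statement is allowed to drop it; only continuity of $b,c$ (for the equivalence of $\Vert\cdot\Vert_b$ with $\Vert\cdot\Vert_{H^1_g(M)}$ and of $|\cdot|_{c,p}$ with $\Vert\cdot\Vert_{L^p_g(M)}$) and $c>0$ are used.

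First I would show that any $(PS)^\fol_\tau$-sequence $(u_n)$ is bounded in $H^1_g(M)$. Since $u_n\in H^1_g(M)^\fol$ and $J'(u_n)\to 0$ in $(H^1_g(M)^\fol)^\ast$, combining $J(u_n)$ with $\tfrac1p J'(u_n)u_n$ and using $p>2$ gives, for $n$ large,
\[
\left(\tfrac12-\tfrac1p\right)\Vert u_n\Vert_b^2 = J(u_n)-\tfrac1p J'(u_n)u_n \le \tau+1+\varepsilon_n\Vert u_n\Vert_b,
\]
with $\varepsilon_n\to 0$, which bounds $\Vert u_n\Vert_b$ and hence $\Vert u_n\Vert_{H^1_g(M)}$. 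Passing to a subsequence I would then take a weak limit $u_n\rightharpoonup u$ in $H^1_g(M)$; since $H^1_g(M)^\fol$ is a closed, hence weakly closed, subspace, $u\in H^1_g(M)^\fol$.

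The crux is the next step. By \th\ref{Theorem:Sobolev Embedding}, because $\kappa_\fol\ge 1$ the restricted space $H^1_g(M)^\fol$ embeds \emph{compactly} into $L^q_g(M)$ for every exponent $q$ below the critical Sobolev exponent $2^\ast_{m-\kappa_\fol}$ of the effective dimension $m-\kappa_\fol$ (with the convention that this means all $q<\infty$ when $m-\kappa_\fol\le 2$); since $d\mapsto 2d/(d-2)$ is decreasing, $2^\ast_m<2^\ast_{m-\kappa_\fol}$, so in particular the embedding $H^1_g(M)^\fol\hookrightarrow L^p_g(M)$ is compact for the exponent $p\le 2^\ast_m$ of \eqref{Main:Yamabe type}. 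Thus, along the subsequence, $u_n\to u$ strongly in $L^p_g(M)$, so $|u_n-u|_{c,p}\to 0$ and $\Vert u_n\Vert_{L^p_g(M)}$ stays bounded. Finally, from \eqref{Eq:Derivative J} I would write
\[
\Vert u_n-u\Vert_b^2 = J'(u_n)[u_n-u]-J'(u)[u_n-u]+\int_M c\big(|u_n|^{p-2}u_n-|u|^{p-2}u\big)(u_n-u)\; dV_g,
\]
where the first term tends to $0$ because $(u_n-u)$ is bounded in $H^1_g(M)^\fol$ and $J'(u_n)\to 0$ in its dual; the second tends to $0$ because $u_n-u\rightharpoonup 0$ tested against the fixed functional $J'(u)\in(H^1_g(M))^\ast$; and the integral tends to $0$ by splitting it in two and applying Hölder's inequality, using $\Vert u_n-u\Vert_{L^p_g(M)}\to 0$ together with the $L^p$-boundedness of $(u_n)$ and the continuity of $c$. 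Hence $\Vert u_n-u\Vert_b\to 0$, i.e. $u_n\to u$ strongly in $H^1_g(M)$, which is the assertion.

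The main obstacle is entirely concentrated in the appeal to \th\ref{Theorem:Sobolev Embedding}: without symmetry the embedding $H^1_g(M)\hookrightarrow L^{2^\ast_m}_g(M)$ is merely continuous, not compact, and the $(PS)$-condition genuinely fails at the critical exponent $p=2^\ast_m$ --- this is the standard source of difficulty in the Yamabe problem. It is precisely the positive-dimensionality of the leaves, $\kappa_\fol\ge 1$, that converts the critical exponent of $M$ into a subcritical exponent for the transverse dimension $m-\kappa_\fol$ and thereby restores compactness; everything else above is routine Nehari-manifold / Palais--Smale bookkeeping that needs no property of $\fol$ beyond closedness of $H^1_g(M)^\fol$.
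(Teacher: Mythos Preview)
Your proposal is correct and follows essentially the same route as the paper: boundedness via the identity $J(u_n)-\tfrac1pJ'(u_n)u_n=\tfrac{p-2}{2p}\Vert u_n\Vert_b^2$, weak compactness, and then the foliated Rellich--Kondrachov theorem (\th\ref{Theorem:Sobolev Embedding}) to turn $p\le 2^\ast_m$ into a subcritical exponent for the transverse dimension. The only cosmetic difference is in the last step: the paper shows $\Vert v_n\Vert\to\Vert v\Vert$ by expanding $J'(v_n)(v_n-v)$ alone and using $\langle v_n,v\rangle\to\Vert v\Vert^2$, whereas you subtract off $J'(u)[u_n-u]$ and bound $\Vert u_n-u\Vert_b^2$ directly; both are standard and equivalent.
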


We prove this result in Section \ref{Sec:Compactness}. This is a
generalization of a classic result by E. Hebey and M. Vaugon \cite{HebeyVaugon1997} and a consequence of a Sobolev embedding theorem for singular Riemannian foliations (\th\ref{Theorem:Sobolev Embedding}). We believe this result is interesting in its own right, since previous versions of it have been applied to restore compactness in many variational problems involving symmetries  given by compact groups actions via isometries (see for instance, \cite{ClappFernandez2017,BartschSchneiderWeth2004,ClappTiwari2016}). 

With this at hand, in Section \ref{Section:Variational Principle} we will be able to generalize Clapp-Pacella's variational principle in \cite{ClappPacella2008} (see also \cite{ClappFernandez2017,ClappWeth2005}). 

\begin{theorem}[Variational Principle]\th\label{Theorem:Variational Principle}
Under the hypotheses of Theorem~\ref{Theorem Main}, let $W$ be a nontrivial finite dimensional subspace of $H_g^1(M)^\fol$. If $J$ satisfies the $(PS)^\fol_\tau$ condition in $H_g^1(M)^\fol$ for every $\tau\leq\sup_W J$, then $J$ has at least one positive critical point $u_1$ and $k-1:=\dim W -1$ pairs of sign-changing critical points $\pm u_2,\ldots,\pm u_k$ in $H_g^1(M)^\fol$ such that $J(u_1)=\tau_g^\fol$ and $J(u_i)\leq \sup_W J$ for $i=2,\ldots,k$.
\end{theorem}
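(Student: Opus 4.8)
The plan is to run the Clapp--Pacella deformation/linking scheme of \cite{ClappPacella2008} inside the closed subspace $H_g^1(M)^\fol$, using the $(PS)^\fol_\tau$-condition exactly where compactness is needed. First I would pass to the Nehari manifold $\mathcal N_g^\fol := \mathcal N_g \cap H_g^1(M)^\fol$, which by the projection \eqref{eq: Projection Nehari} is radially diffeomorphic to the unit sphere $\Sp$ of $H_g^1(M)^\fol$; critical points of $J$ restricted to $\mathcal N_g^\fol$ coincide with nontrivial $\fol$-invariant critical points of $J$ by \eqref{Eq:Nehari maximum}, and $J$ restricted to $\mathcal N_g^\fol$ is bounded below by $\tau_g^\fol>0$. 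The least-energy level $\tau_g^\fol$ is attained by a nonnegative (hence, by the strong maximum principle applied to \eqref{Main:Yamabe type}, positive) function $u_1$: one takes a minimizing sequence, which is a $(PS)^\fol_{\tau_g^\fol}$-sequence after Ekeland's variational principle on the complete $C^1$-manifold $\mathcal N_g^\fol$, and extracts a strongly convergent subsequence via \th\ref{Theorem:Compactness}; replacing $u_1$ by $|u_1|$ (which stays in $\mathcal N_g^\fol$ and has the same energy) and invoking regularity gives positivity.

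For the sign-changing critical points I would use the even functional $\Psi := J\circ\sigma|_{\Sp}$ on $\Sp \subset H_g^1(M)^\fol$, which is even, bounded below by $\tau_g^\fol$, and $C^1$, and which satisfies the $(PS)_\tau$-condition for $\tau \le \sup_W J$ by transporting \th\ref{Theorem:Compactness} through the diffeomorphism $\sigma$. Given the $k$-dimensional subspace $W$, I would intersect the radial projection of $W\setminus\{0\}$ with $\Sp$ to produce a symmetric subset homeomorphic to $\Sp^{k-1}$, on which $\Psi \le \sup_W J$. Using the $\Z/2$-genus (Krasnoselskii genus) one defines the minimax values
\[
c_i := \inf\{ \sup_{u\in A} \Psi(u) \mid A\subset\Sp \text{ symmetric, closed, } \mathrm{genus}(A)\ge i\}, \qquad i=1,\dots,k,
\]
so that $\tau_g^\fol = c_1 \le c_2 \le \cdots \le c_k \le \sup_W J$ (the last inequality because the sphere in $W$ has genus $k$ and sits at sublevel $\sup_W J$). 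Standard equivariant deformation lemma arguments on $\Sp$, valid because $\Psi$ satisfies $(PS)$ up to level $\sup_W J$, show each $c_i$ is a critical value; if some $c_i = c_{i+1}$ the critical set at that level has genus $\ge 2$, hence is infinite, so in all cases one extracts $k$ distinct critical points of $\Psi$, equivalently $k$ critical points $u_1,\dots,u_k$ of $J$ on $\mathcal N_g^\fol$ with $\tau_g^\fol = J(u_1)\le J(u_2)\le\cdots\le J(u_k)\le\sup_W J$. Because $\Psi$ is even, the sign-changing ones come in pairs $\pm u_i$; it remains to check $u_2,\dots,u_k$ are genuinely sign-changing, which follows from the fact that the only nonnegative (or nonpositive) critical point at each positive level has energy exactly $\tau_g^\fol$ and is the unique least-energy solution class, so any critical point with $J(u_i)>\tau_g^\fol$, or any second critical point at level $\tau_g^\fol$ linearly independent from $u_1$, must change sign.

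The main obstacle I anticipate is \emph{not} the abstract minimax machinery — that is classical once compactness is in hand — but rather making the genus/linking estimate $c_k \le \sup_W J$ rigorous: one must verify that the radial projection $\sigma|_{\Sp_W}\colon \Sp_W \to \mathcal N_g^\fol$ is an odd homeomorphism onto a set of genus exactly $k=\dim W$ that lies in the sublevel set $\{\Psi \le \sup_W J\}$, using \eqref{Eq:Nehari maximum} to control energies along rays, and that all relevant sets stay inside $H_g^1(M)^\fol$ so that the $\fol$-restricted $(PS)$ condition actually applies. A secondary subtlety is the equivariant deformation lemma on the manifold $\Sp$ (as opposed to a linear space): one needs a $\Z/2$-equivariant pseudo-gradient flow tangent to $\Sp$, which exists because the whole construction is carried out within the Hilbert space $H_g^1(M)^\fol$ and $\Sp$ is a smooth symmetric submanifold. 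Once these points are settled, Theorem~\ref{Theorem:Variational Principle} follows by combining the minimax critical values with \th\ref{Theorem:Symmetric Criticality} to return from critical points of $J|_{H_g^1(M)^\fol}$ to genuine $\fol$-invariant solutions of \eqref{Main:Yamabe type}.
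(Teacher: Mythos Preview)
Your argument for the positive least-energy solution is essentially the same as the paper's (minimize on $\mathcal{N}_g^\fol$, use the $(PS)^\fol$ compactness, and rule out sign change), so that part is fine.

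The gap is in the sign-changing part. The standard Krasnoselskii genus minimax on $\Sp$ that you describe produces $k$ critical values $c_1\le\cdots\le c_k\le\sup_W J$, but it does \emph{not} by itself yield sign-changing critical points. Your justification---``the only nonnegative (or nonpositive) critical point at each positive level has energy exactly $\tau_g^\fol$''---is false in general: problem \eqref{Main:Yamabe type} may well admit several positive solutions at distinct energy levels (and certainly nothing in the hypotheses rules this out), so a critical point with $J(u_i)>\tau_g^\fol$ need not change sign. Likewise, if some $c_i=c_{i+1}$, the infinite critical set you obtain could consist entirely of constant-sign functions. The classical $\Z/2$-genus sees only the antipodal symmetry $u\mapsto -u$; it cannot distinguish the cone $\mathcal{P}^\fol\cup(-\mathcal{P}^\fol)$ from its complement.

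This is precisely why the paper does \emph{not} use the ordinary genus. Following Clapp--Pacella, it works with a \emph{relative} genus $\mathfrak{g}(\mathcal{D}_d^\fol,\mathcal{D}_0^\fol)$, where $\mathcal{D}_d^\fol=\overline{B}_\alpha(\mathcal{P}^\fol)\cup\overline{B}_\alpha(-\mathcal{P}^\fol)\cup J^d$. The crucial extra ingredient (Lemma~\ref{lem-Invariance}) is that, for a suitable inner product $\langle\cdot,\cdot\rangle_\theta$, the neighborhoods $\overline{B}_\alpha(\pm\mathcal{P}^\fol)$ are strictly positively invariant under the negative gradient flow of $J$; this uses the decomposition $\nabla J(u)=u-Lu-Gu$ and a maximum-principle argument showing $Lu,Gu\in\mathcal{P}^\fol$ whenever $u\in\mathcal{P}^\fol$. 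With this invariance, the minimax values built from the relative genus are forced to be attained \emph{outside} $\mathcal{D}_0^\fol$, hence at genuinely sign-changing critical points (Remark~\ref{Rem:SignChanging complement D}, Lemma~\ref{Lemma:Var pri crit points}). The estimate $\mathfrak{g}(\mathcal{D}_d^\fol,\mathcal{D}_0^\fol)\ge\dim W-1$ is then obtained via a Borsuk--Ulam argument. Your scheme would need to incorporate this cone-avoiding mechanism; without it the conclusion about sign change does not follow.
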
 

The Principle of Symmetric Criticality needed to prove our main theorem is the following.  

\begin{theorem}[Principle of Symmetric Criticality for $J$]\th\label{Theorem:Symmetric Criticality}
Under the hypotheses of Theorem~\ref{Theorem Main}, if $u\in H_g^1(M)^\fol$ is a critical point of the functional $J$ restricted to $H_g^1(M)^\fol$, then $u$ is a critical point of $J$ in the space $H_g^1(M)$.
\end{theorem}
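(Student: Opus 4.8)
\textbf{Proof proposal for the Principle of Symmetric Criticality (\thref{Theorem:Symmetric Criticality}).}

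The plan is to exhibit $H^1_g(M)^\fol$ as the fixed-point set of a group of isometries of $H^1_g(M)$ for the inner product $\langle\cdot,\cdot\rangle_b$, and then invoke a Hilbert-space version of Palais' Principle of Symmetric Criticality; the key point that must be checked carefully is that the relevant symmetry, the ``leafwise averaging'' operator, is a bounded linear isometry of $H^1_g(M)$ that fixes exactly $H^1_g(M)^\fol$ and is compatible with the $L^p$ term in $J$. Concretely, since the leaves of $\fol$ are closed (hence compact and embedded), one considers the averaging projection $P\colon H^1_g(M)\to H^1_g(M)^\fol$ obtained leafwise: for $u\in\mathcal C^\infty(M)$, set $(Pu)(x):=\fint_{L_x} u\,d\mu_{L_x}$, the mean of $u$ over the leaf through $x$ with respect to the induced Riemannian measure. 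First I would record that $P$ is well defined on smooth functions, that $Pu$ is again $\fol$-invariant and $dV_g$-measurable, and (using the coarea/disintegration of $dV_g$ along the foliation, or equivalently working on the leaf space with its pushforward measure as in the cited result of Galaz-Garcia–Kell–Mondino–Sosa for group actions) that $P$ extends to a bounded linear idempotent on $L^2_g(M)$ and on $L^p_g(M)$ with $\|Pu\|_{L^p}\le\|u\|_{L^p}$, and on $H^1_g(M)$ with $\|\nabla_g(Pu)\|_{L^2}\le\|\nabla_g u\|_{L^2}$; the gradient estimate follows from Jensen's inequality applied leafwise together with the transnormality of $\fol$, which guarantees that differentiation transverse to the leaves commutes appropriately with leafwise averaging.

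Next I would verify the variational identity that makes symmetric criticality work: because $b$ and $c$ are $\fol$-invariant and $P$ is leafwise averaging, one has $\langle Pu,v\rangle_b=\langle u,Pv\rangle_b$ and $\langle Pu,v\rangle_b=\langle Pu,Pv\rangle_b$ for all $u,v\in H^1_g(M)$, i.e.\ $P$ is a self-adjoint idempotent for $\langle\cdot,\cdot\rangle_b$ with image $H^1_g(M)^\fol$; similarly $\int_M c\,|u|^{p-2}u\,(v-Pv)\,dV_g$ behaves well under $P$ when $u$ is already $\fol$-invariant, since then $|u|^{p-2}u$ is $\fol$-invariant and its pairing against $v$ depends only on $Pv$. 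Granting this, let $u\in H^1_g(M)^\fol$ be a critical point of $J|_{H^1_g(M)^\fol}$. For arbitrary $v\in H^1_g(M)$ write $v=Pv+(v-Pv)$ with $Pv\in H^1_g(M)^\fol$; then $J'(u)(Pv)=0$ by hypothesis, while $J'(u)(v-Pv)=\langle u,v-Pv\rangle_b-\int_M c|u|^{p-2}u\,(v-Pv)\,dV_g$, and both terms vanish because $u$ and $|u|^{p-2}u$ are $\fol$-invariant so that their $\langle\cdot,\cdot\rangle_b$- and $L^2$-pairings against the ``mean-zero-on-leaves'' function $v-Pv$ are zero (this is exactly the statement that $H^1_g(M)^\fol$ is $\langle\cdot,\cdot\rangle_b$-orthogonal to $\ker P$, plus the analogous orthogonality in $L^2_g(M)$ with weight $c$). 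Hence $J'(u)v=0$ for all $v\in H^1_g(M)$, which is the desired conclusion.

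The main obstacle I anticipate is making the leafwise averaging operator $P$ rigorous on the Sobolev space in the presence of singular leaves of varying dimension: the naive pointwise definition is only clearly sensible on the regular stratum, and one must check that the resulting function extends across the singular stratum with the right regularity and that the $H^1$-norm estimate survives. I would handle this either by (i) approximation: define $P$ first on $\mathcal C^\infty(M)$, prove the norm bounds there using that $\mathcal C^\infty(M)^\fol$ is the relevant target (which is legitimate because smooth functions are dense and, by \thref{Lemma:Infinite dimensional} and the surrounding discussion, $H^1_g(M)^\fol$ is the $H^1$-closure of $\mathcal C^\infty(M)^\fol$), and then extend $P$ by continuity; or (ii) identifying $H^1_g(M)^\fol$ with a Sobolev space on the leaf space $M/\fol$ equipped with the pushforward measure, so that $P$ becomes the orthogonal projection onto that subspace and the estimates are automatic. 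Approach (i) seems cleanest and avoids having to develop analysis on the (possibly singular) quotient; the only genuinely technical lemma needed is the leafwise Jensen/commutation estimate $\|\nabla_g(Pu)\|_{L^2}\le\|\nabla_g u\|_{L^2}$ for $u\in\mathcal C^\infty(M)$, which uses condition (i) in the definition of a singular Riemannian foliation (transnormality) in an essential way.
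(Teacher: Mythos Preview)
Your approach via an explicit leafwise averaging operator $P$ has a genuine gap. The crucial step is the claim that $\langle u,v-Pv\rangle_b=0$ whenever $u\in H^1_g(M)^\fol$, equivalently that $P$ is the $\langle\cdot,\cdot\rangle_b$-orthogonal projection onto $H^1_g(M)^\fol$. The zero-order piece $\int_M bu(v-Pv)\,dV_g$ does vanish (on the principal stratum, leafwise averaging is the $L^2$-orthogonal projection onto basic functions, by the coarea formula for the Riemannian submersion $M_{\prin}\to M^\ast_{\prin}$), but the gradient piece $\int_M\langle\nabla_g u,\nabla_g(v-Pv)\rangle_g\,dV_g$ is the problem. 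Integrating by parts turns it into $-\int_M(\Delta_g u)(v-Pv)\,dV_g$, and for this to vanish for all $v$ one needs $\Delta_g u$ to be $\fol$-invariant whenever $u$ is; for a Riemannian submersion this is equivalent to the mean curvature vector of the fibers being \emph{basic}, which is not part of the hypotheses and is not automatic for an arbitrary bundle-like metric. Your heuristic that ``transnormality guarantees that differentiation transverse to the leaves commutes with leafwise averaging'' is precisely what fails outside the group-action case: for an isometric $G$-action one averages against a fixed Haar measure and pulls back by global isometries of $M$, so commutation with $\nabla_g$ is immediate; for a general singular Riemannian foliation the intrinsic leaf measures vary from leaf to leaf and there is no such commutation. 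Neither of your proposed fixes (i)--(ii) addresses this, since both still presuppose the gradient estimate $\|\nabla_g(Pu)\|_{L^2}\le\|\nabla_g u\|_{L^2}$ that encodes the missing commutation.

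The paper sidesteps the difficulty by using the \emph{abstract} Hilbert-space orthogonal projection rather than any geometric averaging: decompose $v=v_1+v_2$ with $v_1\in H^1_g(M)^\fol$ and $v_2$ in its orthogonal complement $H^\perp_\fol$ for the \emph{standard} $H^1_g(M)$ inner product. Then $\langle u,v_2\rangle_{H^1_g(M)}=0$ holds by definition, so the gradient term is already accounted for, and one is left only with the $L^2$-type corrections $\int_M(b-1)uv_2\,dV_g$ and $\int_M c|u|^{p-2}u\,v_2\,dV_g$. These vanish once one proves the single lemma $H^\perp_\fol\subset (L^2_\fol)^\perp$, which the paper obtains by passing to the leaf space $(M/\fol,\pi_\ast dV_g)$ and using only density of Lipschitz functions there---no averaging operator, no gradient commutation, and no hypothesis on the mean curvature of the leaves.
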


The proof of this result appears in Section \ref{Sec:Symmetryc Criticality}. In \cite{Henry2019}, a similar argument is used for the case of foliations given by isoparametric functions.

\bigskip
We now prove \th\ref{Theorem Main} in an analogous fashion to the first Main Result in \cite{ClappFernandez2017}.

\begin{proof}[Proof of Theorem \ref{Theorem Main}] 
In order to find nontrivial foliated critical points of $J$, by Theorem \ref{Theorem:Symmetric Criticality}, it suffices to consider critical points of the restriction of $J$ to the space $H_g^1(M)^\fol$. To do so, we proceed as follows. By Theorem \ref{Lemma:Infinite dimensional},
since we are assuming that the singular Riemannian foliation is nontrivial,  for any
given $k\in\mathbb{N}$, we may choose $k$ nontrivial functions $\omega_{i}%
\in\mathcal{C}^\infty(M)^\fol$ $i=1,\ldots,k$ with disjoint supports.
Let $W$ be the linear subspace of
$H_g^1(M)^\fol$ spanned by $\{\omega_{1},\ldots,\omega_{k}\}$. Since
$\omega_{i}$ and $\omega_{j}$ have disjoint supports for $i\neq j,$ the set
$\{\omega_{1},\dots,\omega_{k}\}$ is orthogonal in $H_g^1(M)^\fol$.
Hence, $\dim W=k.$

Now, as $\kappa_\fol\geq 1$, Theorem
\ref{Theorem:Compactness} yields that $J$ satisfies $(PS)^\fol_{\tau}$ in $H_g^1(M)^\fol$ for every $\tau\in\mathbb{R}$. Therefore, we can apply Theorem
\ref{Theorem:Variational Principle} and get at least one positive and $(k-1)$ sign-changing
$\fol$-invariant critical points for the restriction of $J$  to $H_g^1(M)^\fol$. Theorem \ref{Theorem:Symmetric Criticality} yields that these are also critical points of $J$ in $H_g^1(M)$ and, hence nontrivial $\fol$-invariant solutions to problem
\eqref{Main:Yamabe type}. Given that  $k\in\mathbb{N}$ is arbitrary,
we conclude that there are infinitely many sign-changing solutions.
\end{proof}

\section{Singular Riemannian Foliations}\label{sec:Preliminaries Foliations}

In the first part of this section we provide the material needed in Section \ref{Sec:Compactness} to show a Sobolev embedding theorem (Theorem \ref{Theorem:Sobolev Embedding}), such as the  existence of smooth and $\fol$-invariant partitions of unity and the Slice Theorem. Furthermore,  we prove that $H^1_g(M)^\fol$ is infinite dimensional (Theorem \ref{Lemma:Infinite dimensional}). In the second part we give examples of singular Riemannian foliations. In particular, we present examples for which previous methods to find solutions to \eqref{Main:Yamabe type} such as those \cite{ClappFernandez2017,FernandezPetean(2020),FernandezPalmasPetean(2020),AlexandrinoCavenaghi2021} do not apply.

\subsection{Singular Riemannian Foliations Revisited}

\bigskip 

Let $\mathcal{F}$ be a singular Riemannian foliation with closed leaves on a closed Riemannian manifold $(M^m,g)$. Given $q\in M$, denote by $L_q$ the leaf containing $q$, by $k_q$ the dimension of $L_q$ and by $\pi_q\colon \R^{k_q}\times\R^{m-k_q}\to\R^{m-k_q}$ the natural projection. The following lemma describes the foliation in a small neighborhood of $q$.

\begin{lemma}[Proposition~2.17 in \cite{Radeschi-notes}, Figure~\ref{fig:Plaque Trivialization}]\th\label{Lemma:Charts of foliation around a point}
For any $q\in M$, there exists a coordinate system $(W,\varphi)$  such that
\begin{enumerate}
\item $q\in W$, $\varphi(W)=U\times V$, where $U\subset\R^{k_q}$ and $V\subset\R^{m-k_q}$ are open and bounded subsets with smooth boundary;
\item for any $q'\in W$, $U\times\pi_q(\varphi(q'))\subset\varphi(L_{q'}\cap W)$.
\end{enumerate}
\end{lemma}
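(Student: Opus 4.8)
The plan is to build the chart by ``flowing out'' a transverse slice along a frame of leaf-tangent vector fields, in the spirit of the classical flow-box theorem. By the smoothness condition (ii) in the definition of a singular Riemannian foliation, there are globally defined vector fields $X_1,\dots,X_{k_q}\in\mathcal{X}_\fol$ whose values at $q$ form a basis of $T_qL_q$; since $M$ is compact, each $X_i$ has a globally defined flow $\psi^i_t$. A key elementary observation is that the flow of any $X\in\mathcal{X}_\fol$ preserves the leaf decomposition: if $X_p\in T_pL$ for every $p$ in a leaf $L$, then $X$ restricts to a smooth vector field on the immersed (here embedded, as the leaves are closed) submanifold $L$, so its integral curves through points of $L$ stay in $L$, i.e.\ $\psi^X_t(L)\subseteq L$ for all $t$.

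Next I would pick an $(m-k_q)$-dimensional embedded disk $S$ through $q$ transverse to $\mathrm{span}\{(X_1)_q,\dots,(X_{k_q})_q\}$ — for instance a small geodesic ball in $\exp_q(\nu_qL_q)$ — together with a chart $\chi\colon S\to V\subset\R^{m-k_q}$ with $V$ an open ball and $\chi(q)=0$. Define
\[
\Phi(t_1,\dots,t_{k_q},s):=\psi^1_{t_1}\!\big(\psi^2_{t_2}(\cdots\psi^{k_q}_{t_{k_q}}(s)\cdots)\big),\qquad (t,s)\in B\times S,
\]
where $B\subset\R^{k_q}$ is a small open ball around $0$. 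A direct computation gives $D\Phi_{(0,q)}(\partial/\partial t_i)=(X_i)_q$ and $D\Phi_{(0,q)}|_{T_qS}=\mathrm{id}$, so $D\Phi_{(0,q)}$ is an isomorphism; shrinking $B$ and $S$, the map $\Phi$ becomes a diffeomorphism onto an open neighborhood $W$ of $q$. Setting $U:=B$ and $\varphi:=(\mathrm{id}_U\times\chi)\circ\Phi^{-1}\colon W\to U\times V$ yields a coordinate system with $\varphi(W)=U\times V$, with $U$ and $V$ open, bounded, and with smooth boundary (balls), proving (1); note that $\pi_q\circ\varphi$ returns precisely the $\chi$-coordinate of the $S$-factor.

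For (2), take $q'\in W$ and write $\varphi(q')=(u',v')$, so $\pi_q(\varphi(q'))=v'$ and $q'=\Phi(u',s')$ with $s':=\chi^{-1}(v')\in S$. The coordinate slice $U\times\{v'\}$ equals $\varphi\big(\{\Phi(t,s')\mid t\in B\}\big)$, since $\varphi(\Phi(t,s'))=(t,v')$. Each point $\Phi(t,s')$ is obtained from $s'$ by successively flowing along $X_{k_q},\dots,X_1$, and since every $X_i$ is leaf-preserving, all of these points — including $q'=\Phi(u',s')$ itself — lie in the single leaf $L_{s'}$; in particular $L_{q'}=L_{s'}$. As they also lie in $W$, we get $\{\Phi(t,s')\mid t\in B\}\subseteq L_{q'}\cap W$, hence $U\times\pi_q(\varphi(q'))=U\times\{v'\}\subseteq\varphi(L_{q'}\cap W)$, which is exactly (2).

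The main obstacle is a bookkeeping one: ensuring that the iterated flows defining $\Phi$ remain inside the eventual chart domain and that $\Phi$ is injective, which is handled by the usual shrinking of $B$ and $S$ in the flow-box argument. One should also remark that the vector fields $X_1,\dots,X_{k_q}$ need not span the tangent spaces of nearby (possibly higher-dimensional) leaves, but this is irrelevant here, since (2) only asserts that the $U$-plaque through $q'$ lies inside $L_{q'}$, not that it exhausts $L_{q'}\cap W$.
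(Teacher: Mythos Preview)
The paper does not supply its own proof of this lemma; it is simply quoted as Proposition~2.17 from Radeschi's lecture notes \cite{Radeschi-notes}. Your argument is the standard flow-box construction and is correct: the only place where the singular Riemannian foliation hypothesis enters is through condition (ii), which guarantees the existence of the leaf-tangent frame $X_1,\dots,X_{k_q}\in\mathcal{X}_\fol$, and your observation that the flow of any $X\in\mathcal{X}_\fol$ preserves every leaf is exactly what makes (2) go through. Your closing remark is also to the point: since nearby leaves may have strictly larger dimension, the plaques $U\times\{v'\}$ need only be \emph{contained in}, not equal to, $\varphi(L_{q'}\cap W)$, and this is precisely what the statement asserts.
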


\begin{figure}[h!]
    \centering
    \includegraphics[scale =0.3]{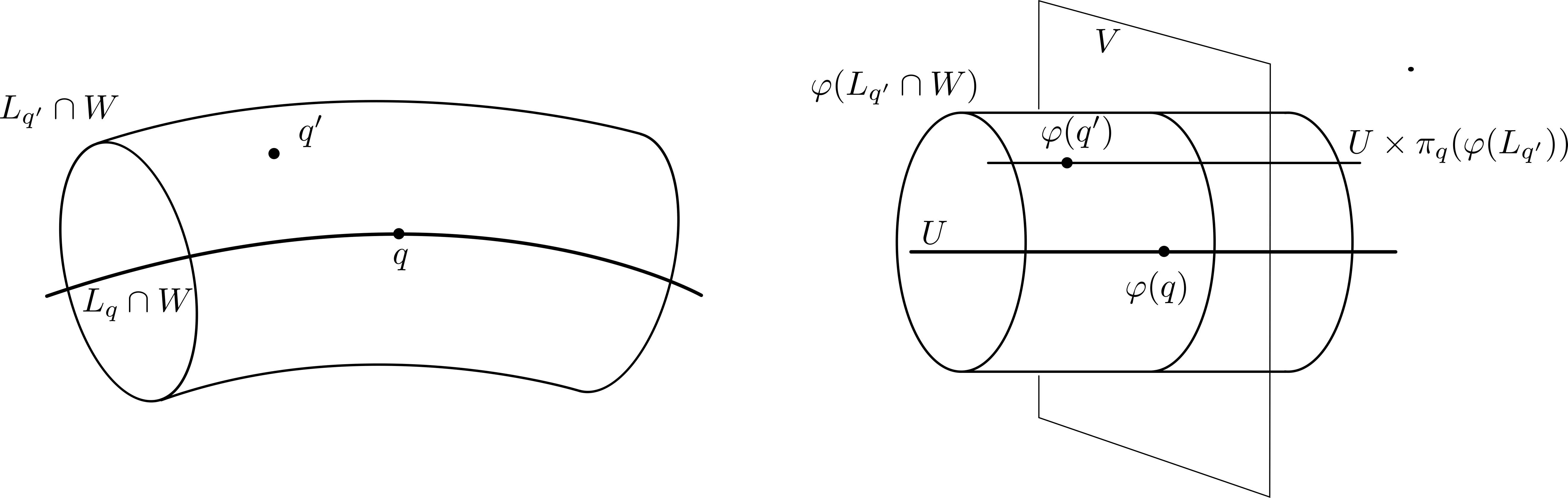}
    \caption{Local trivialization of a foliation}
    \label{fig:Plaque Trivialization}
\end{figure}

Consider $p\in M$ fixed. Take $\varepsilon>0$ such that $\expo_p\colon B_\varepsilon(0)\subset T_p M\to M$ is a diffeomorphism. Denote by $\D^\perp_p(\varepsilon)\subset \nu_p(M,L_p)$  the closed ball of radius $\varepsilon$ in the normal space to the leaf at $p$. Consider $S_p(\varepsilon) = \expo_p(\D^\perp_p(\varepsilon))$. 
For $q \in S_p(\varepsilon)$ denote by $\tilde{L}_q$ the connected component of $L_q\cap S_p(\varepsilon)$ containing $q$. 
Since $\varepsilon$ is smaller than the injectivity radius at $p$, then $q\in S_p(\varepsilon)$ if and only if $q= \expo_p(v)$ for a unique $v\in \D^\perp_p(\varepsilon)$. For such $q$ we
set $\mathcal{L}_v = \expo_p^{-1}(\tilde{L}_q)$. The following theorem states that the partition of $\D^\perp_p(\varepsilon)$ by $\fol_p^\perp(\varepsilon) = \{\mathcal{L}_v\mid v\in \D^\perp_p(\varepsilon) \}$ is a singular Riemannian foliation.

\begin{thm}[Infinitesimal Foliation, Proposition~6.5 in \cite{Molino}]\th\label{T: Infintisimal foliation}
    Let $(M,\fol)$ be a singular Riemannian foliation on the compact manifold $M$ and fix $ p\in M$. Take $\varepsilon>0$ smaller than the injectivity radius of $M$ at $p$. Then   $(\D_p^\perp(\varepsilon), \fol_p^\perp(\varepsilon))$ equipped with the Euclidean metric is a singular Riemannian foliation.
\end{thm}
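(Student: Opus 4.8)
The statement to be proved is that $(\D_p^\perp(\varepsilon),\fol_p^\perp(\varepsilon))$, equipped with the flat Euclidean metric pulled back via $\expo_p$ from a normal ball, is itself a singular Riemannian foliation. Since this is a classical result (Molino, Proposition~6.5), the plan is to reconstruct the standard argument rather than to invent something new. The key conceptual point is that the decomposition of the normal slice is inherited from the ambient foliation $\fol$ restricted to the slice $S_p(\varepsilon)$, and one must check the two defining axioms: (i) that $\fol_p^\perp(\varepsilon)$ is a transnormal system for the Euclidean metric, and (ii) that it is smooth in the sense of being spanned at each point by smooth vector fields tangent to its leaves.

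First I would set up notation carefully: identify $\D_p^\perp(\varepsilon)$ with $S_p(\varepsilon)$ via the diffeomorphism $\expo_p$, so that the induced metric on $S_p(\varepsilon)$ is what we transport back. The plaques $\mathcal{L}_v$ are, by definition, the $\expo_p$-preimages of connected components of $L_q\cap S_p(\varepsilon)$. The main structural input is the transnormality of $\fol$ in $(M,g)$: a geodesic of $M$ issuing orthogonally from the leaf $L_p$ stays orthogonal to every leaf it meets; this is precisely what guarantees that the slice $S_p(\varepsilon)$ is ``saturated enough'' — geodesics in $M$ tangent to $S_p(\varepsilon)$ at $p$ and orthogonal to $\tilde L_q$ remain in a controlled relation to the foliation. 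To pass from transnormality of $\fol$ in $(M,g)$ to transnormality of $\fol_p^\perp(\varepsilon)$ in the Euclidean slice, I would use the fact that, to first order at $p$, geodesics of the slice $S_p(\varepsilon)$ (with its induced metric) agree with geodesics of $M$ orthogonal to $L_p$; more precisely, one linearizes the situation, invoking the Slice Theorem (referenced earlier as available in Section~\ref{sec:Preliminaries Foliations}) and Lemma~\ref{Lemma:Charts of foliation around a point} to obtain local product coordinates in which the plaques $\mathcal{L}_v$ are horizontal slices. Smoothness (axiom (ii)) then follows because the module $\mathcal{X}_\fol$ of smooth foliated vector fields on $M$ restricts, after projecting off the $L_p$-tangent directions and evaluating along $S_p(\varepsilon)$, to a module of smooth vector fields on $\D_p^\perp(\varepsilon)$ that spans the tangent spaces to the $\mathcal{L}_v$ at every point — a computation local in the charts of Lemma~\ref{Lemma:Charts of foliation around a point}.

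The order of steps would be: (1) record the charts and the Slice Theorem, fixing the identification $S_p(\varepsilon)\cong\D_p^\perp(\varepsilon)$; (2) show the decomposition $\fol_p^\perp(\varepsilon)$ is well defined, i.e., the $\mathcal{L}_v$ partition $\D_p^\perp(\varepsilon)$ and are injectively immersed submanifolds — this is immediate from taking connected components of $L_q\cap S_p(\varepsilon)$ and the fact that leaves are injectively immersed; (3) verify smoothness by restricting and projecting foliated vector fields; (4) verify the transnormal/metric condition by comparing Euclidean geodesics in the slice with ambient geodesics orthogonal to $L_p$ and invoking transnormality of $\fol$. I expect step (4) — the transnormality of the infinitesimal foliation — to be the main obstacle, because it is the step where one genuinely uses the Riemannian (not merely foliated) structure and where the subtlety about ``Euclidean metric versus induced metric on $S_p(\varepsilon)$'' must be handled: a priori the induced metric on $S_p(\varepsilon)$ is only asymptotically flat near $p$, so one must argue that the transnormality condition, being a first-order/infinitesimal condition, is insensitive to higher-order corrections, or equivalently pass to the genuine infinitesimal picture by a blow-up/rescaling at $p$. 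Since this is precisely Molino's Proposition~6.5, I would ultimately cite \cite{Molino} for the delicate verification and restrict the written proof to explaining the setup and the reduction to that statement.
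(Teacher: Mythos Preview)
The paper does not prove this theorem at all: it is stated with attribution (``Proposition~6.5 in \cite{Molino}'') and used as a black box, with no argument given in the text. Your proposal to sketch the setup and then cite \cite{Molino} for the actual verification is therefore exactly what the paper does, only the paper skips even the sketch; there is no proof in the paper to compare your plan against.
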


Moreover, $(\D_p^\perp(\varepsilon), \fol_p^\perp(\varepsilon))$ does not depend on the radius chosen. 

\begin{lemma}[\cite{Radeschi-notes}]\th\label{L: Homothetic transformation Lemma}
 Let $\varepsilon>0$ and $\lambda>0$ be such that   $\varepsilon$ and $\lambda \varepsilon$ are smaller than the injectivity radius of $M$ at $p$. Then the map $h_\lambda\colon \D^\perp_p(\varepsilon)\to \D^\perp_p(\lambda\varepsilon)$ given by  $h_\lambda(v) = \lambda v $ is a foliated diffeomorphism between $(\D_p^\perp(\varepsilon), \fol_p^\perp(\varepsilon))$ and $(\D_p^\perp(\lambda\varepsilon), \fol_p^\perp(\lambda\varepsilon))$
\end{lemma}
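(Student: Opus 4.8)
The statement is essentially the observation that the infinitesimal foliation $\fol_p^\perp(\varepsilon)$ is invariant under the linear scaling $v\mapsto \lambda v$ of the normal disk, and the natural way to prove this is to exploit the fact that the leaves of $\fol_p^\perp(\varepsilon)$ are defined intrinsically through the original foliation $\fol$ together with the homothety (re-scaling) property of singular Riemannian foliations. The plan is to first recall, from Theorem \ref{T: Infintisimal foliation} and the surrounding discussion, that for $v\in\D^\perp_p(\varepsilon)$ the plaque $\mathcal{L}_v$ equals $\expo_p^{-1}(\tilde L_q)$ where $q=\expo_p(v)$ and $\tilde L_q$ is the connected component of $L_q\cap S_p(\varepsilon)$ through $q$; the key point stated right after Theorem \ref{T: Infintisimal foliation} is that this partition does not depend on the radius $\varepsilon$ chosen (as long as $\varepsilon$ is below the injectivity radius). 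Concretely this independence means: if $v\in\D^\perp_p(\varepsilon)$ and also $v\in\D^\perp_p(\varepsilon')$ for two admissible radii, then the plaque $\mathcal{L}_v$ computed at scale $\varepsilon$ is the germ at $v$ of the plaque computed at scale $\varepsilon'$. I would take this radius-independence as given (it is quoted from \cite{Molino,Radeschi-notes}).

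\textbf{Key steps.} First, I would show $h_\lambda$ is a diffeomorphism of disks: it is the restriction of the linear isomorphism $v\mapsto \lambda v$ of $\nu_p(M,L_p)$, so this is immediate, and it sends $\D^\perp_p(\varepsilon)$ onto $\D^\perp_p(\lambda\varepsilon)$ by definition of the closed balls. Second, and this is the substantive part, I would verify that $h_\lambda$ carries plaques to plaques, i.e. $h_\lambda(\mathcal{L}_v)=\mathcal{L}_{\lambda v}$ for every $v\in\D^\perp_p(\varepsilon)$. For this I would use the behavior of geodesics under the metric homothety: the curve $t\mapsto \expo_p(t w)$ is a geodesic, and scaling the normal vector corresponds to reparametrizing geodesics emanating from $p$. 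The cleanest route is to invoke the homothetic transformation lemma for the infinitesimal foliation as it is classically stated — namely that $\fol_p^\perp(\varepsilon)$ is a \emph{cone foliation}: if $\mathcal{L}$ is a leaf and $v\in\mathcal{L}$ then $tv\in\mathcal{L}$ for all admissible $t$ with $tv$ in the disk — or, more precisely, that rescaling is a foliated map onto the rescaled disk. Combining the cone/homogeneity structure with the radius-independence from Theorem \ref{T: Infintisimal foliation}, one gets that $\expo_p(\lambda v)$ lies on the plaque through $\expo_p(\lambda v)$ that is the image under scaling of the plaque through $\expo_p(v)$; unwinding the definition $\mathcal{L}_v=\expo_p^{-1}(\tilde L_{\expo_p(v)})$ then yields $h_\lambda(\mathcal{L}_v)=\mathcal{L}_{\lambda v}$. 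Finally, since $h_\lambda$ is a smooth diffeomorphism of the total spaces mapping the leaf partition of $(\D^\perp_p(\varepsilon),\fol^\perp_p(\varepsilon))$ bijectively onto that of $(\D^\perp_p(\lambda\varepsilon),\fol^\perp_p(\lambda\varepsilon))$, and since the Euclidean metric is scaled by the constant factor $\lambda^2$ (so conformal, hence a singular Riemannian foliation is carried to a singular Riemannian foliation — indeed a homothety preserves orthogonality of geodesics to leaves), $h_\lambda$ is a foliated diffeomorphism.

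\textbf{Main obstacle.} The only genuine content is step two: showing that the plaque partition transforms correctly under scaling, and for that the crux is the assertion that $(\D^\perp_p(\varepsilon),\fol^\perp_p(\varepsilon))$ is independent of $\varepsilon$ together with the conical nature of the infinitesimal foliation. Both are classical facts about infinitesimal foliations (Molino, Radeschi's notes), so rather than reproving them I would carefully cite them and spell out the one-line deduction: $h_\lambda$ post-composed with $\expo_p$ at scale $\lambda\varepsilon$ agrees, at the level of germs of leaves, with $\expo_p$ at scale $\varepsilon$ followed by the homothety of $M$ of ratio $\lambda$ restricted near $p$, and the latter permutes the leaves of $\fol$ near $p$ up to the radius rescaling. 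Everything else is bookkeeping about linear maps and Euclidean balls. I would keep the write-up short, essentially reducing the statement to these two cited structural properties.
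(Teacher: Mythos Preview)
The paper does not give a proof of this lemma; it merely cites \cite{Radeschi-notes} and immediately afterward states that ``the previous result implies that for $\varepsilon>0$ small enough $\fol_p^\perp(\varepsilon)$ is independent of $\varepsilon$.'' So there is no in-paper argument to compare against.

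That said, your plan has a genuine circularity. You propose to deduce the lemma from (a) the radius-independence of $(\D_p^\perp(\varepsilon),\fol_p^\perp(\varepsilon))$ and (b) the conical structure of the infinitesimal foliation. But in the paper's logical order the sentence ``Moreover, $(\D_p^\perp(\varepsilon),\fol_p^\perp(\varepsilon))$ does not depend on the radius chosen'' is an \emph{announcement} of what Lemma~\ref{L: Homothetic transformation Lemma} is about to justify; immediately after the lemma the paper says explicitly that radius-independence is a \emph{consequence} of it. Likewise, when you write ``invoke the homothetic transformation lemma for the infinitesimal foliation as it is classically stated --- namely that $\fol_p^\perp(\varepsilon)$ is a cone foliation,'' you are invoking precisely the statement you are asked to prove. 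Neither (a) nor (b) is available to you as input.

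What is actually needed is the homothetic transformation lemma for the foliation $\fol$ on $M$ itself (Molino, and \cite{Radeschi-notes}): using transnormality (condition (i) in the definition of a singular Riemannian foliation), normal geodesics from $L_p$ stay perpendicular to all leaves they meet, so leaves near $L_p$ are contained in metric tubes around $L_p$, and the radial rescaling $\expo_p(v)\mapsto\expo_p(\lambda v)$ in a tubular neighborhood sends plaques of $\fol$ to plaques of $\fol$. Pulling this back through $\expo_p$ to $\D_p^\perp$ yields $h_\lambda(\mathcal{L}_v)=\mathcal{L}_{\lambda v}$ directly from the definition $\mathcal{L}_v=\expo_p^{-1}(\tilde L_{\expo_p(v)})$. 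Your Step~1 (that $h_\lambda$ is a diffeomorphism of disks) and your concluding remarks are fine; the gap is entirely in Step~2, where the input should be a property of $\fol$ on $M$, not an assumed property of $\fol_p^\perp$.
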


The previous result implies that for $\varepsilon>0$ small enough  $\fol_p^\perp(\varepsilon)$ is independent of $\varepsilon$, and thus we set
$\fol_p^\perp = \fol_p^\perp(\varepsilon)$.
We call $\fol_p^\perp$ the \emph{infinitesimal foliation at $p$}.

Observe that the image under $\expo_p$ of different leaves of $\fol_p^\perp$ may be contained in the same leaf of $\fol$. To account for this, we define the holonomy group of a leaf.

\begin{lemma}[See \cite{Corro,Radeschi15,Radeschi-notes}]\th\label{L: action of curves} 
Under the hypotheses of Theorem \ref{T: Infintisimal foliation} and denoting by $L$ the leaf of $\fol$ that contains $p$, the following statements hold:
\begin{enumerate}[label=\alph*)]
\item Given a piece-wise smooth curve $\gamma\colon [0,1]\to L$ starting  at $p$  there exists a map $G(\gamma)(t)\colon \D^\perp_p(\varepsilon)\to \D^\perp_{\gamma(t)}(\varepsilon)$ which is a foliated isometry between $(\D^\perp_p(\varepsilon),\fol_p^\perp)$ and $(\D^\perp_{\gamma(t)},\fol^\perp_{\gamma(t)})$.
\item The set 
\[
\Hol(L,p) = \{G(\gamma)(1)\mid \gamma \mbox{ is a closed loop in } L \mbox{ centered at }p\}
\] is a group under the composition operation. 
\item Consider $p\neq q\in L$, and $\alpha\colon [0,1] \to L$ a piece-wise curve starting at $p$ and ending at $q$. Then 
\[
\Hol(L,p) = G(\alpha)(1)^{-1}\Hol(L,q)G(\alpha)(1).
\]
\item Given two piece-wise smooth curves $\gamma_1$ and $\gamma_2$ starting at $p$ and homotopic relative to its end points,  the composition  of their corresponding foliated isometries, $G(\gamma_2)(1)^{-1}\circ G(\gamma_1)(1)$, is homotopic to the identity map.
\end{enumerate}
\end{lemma}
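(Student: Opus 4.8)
The goal is to establish the four assertions (a)–(d) about the holonomy of a leaf $L\in\fol$ through a fixed point $p$. The overarching strategy is parallel transport of infinitesimal foliations along curves in $L$, using the transnormal structure of $\fol$. First, for part (a): given a piecewise smooth curve $\gamma\colon[0,1]\to L$, I would cover $\gamma([0,1])$ by finitely many foliated trivializing charts as in \th\ref{Lemma:Charts of foliation around a point}, partition $[0,1]$ accordingly, and on each subinterval define the transport map by following geodesics orthogonal to $L$. Concretely, for $v\in\D^\perp_p(\varepsilon)$ one parallel-transports $v$ along $\gamma$ with respect to the normal connection on $\nu(M,L)$ — this is well defined because $\varepsilon$ is below the injectivity radius — and sets $G(\gamma)(t)(v)$ to be the corresponding point of $\D^\perp_{\gamma(t)}(\varepsilon)$. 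The transnormality of $\fol$ (geodesics orthogonal to one leaf stay orthogonal to all leaves they meet) together with \th\ref{T: Infintisimal foliation} is exactly what guarantees that this map sends leaves of $\fol_p^\perp$ to leaves of $\fol_{\gamma(t)}^\perp$; that it is an isometry follows since parallel transport preserves the Euclidean metric on normal spaces. One should also check that the construction is independent of the choice of chart cover, which is a standard gluing argument.

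For part (b): with part (a) in hand, $\Hol(L,p)$ is the set of $G(\gamma)(1)$ for loops $\gamma$ at $p$. Composition corresponds to concatenation of loops (and one checks $G(\gamma_1 * \gamma_2)(1) = G(\gamma_2)(1)\circ G(\gamma_1)(1)$ up to orientation conventions), the constant loop gives the identity, and the reversed loop $\bar\gamma$ gives the inverse since $G(\bar\gamma)(1) = G(\gamma)(1)^{-1}$; closure under composition is immediate. For part (c): given $\alpha$ from $p$ to $q$, conjugation $\beta \mapsto \alpha * \beta * \bar\alpha$ is a bijection between loops at $q$ and loops at $p$, and functoriality of $G$ under concatenation translates this into $\Hol(L,p) = G(\alpha)(1)^{-1}\Hol(L,q)G(\alpha)(1)$. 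For part (d): if $\gamma_1, \gamma_2$ are homotopic rel endpoints, then a homotopy $H\colon [0,1]^2\to L$ gives a continuous family of transport maps; since $\D^\perp_p(\varepsilon)$ is contractible and the space of foliated isometries of $(\D^\perp_p(\varepsilon),\fol_p^\perp)$ is a Lie group (acting on the finite-dimensional normal disk), the path $s\mapsto G(\gamma_2)(1)^{-1}\circ G(H(\cdot,s))(1)$ furnishes the asserted homotopy to the identity; here one may invoke the fact that transport along a nullhomotopic loop, varying the nullhomotopy, stays in the identity component, or simply that transport depends continuously on the curve in the $C^0$ (or piecewise-$C^1$) topology.

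The main obstacle is part (a): making precise that "follow orthogonal geodesics / normal parallel transport" genuinely produces a \emph{foliated} map, i.e. that it respects the infinitesimal foliations and not merely the smooth structure of the normal disk. This requires carefully combining transnormality with the homothety invariance of $\fol_p^\perp$ (\th\ref{L: Homothetic transformation Lemma}) to see that the transport is compatible across overlapping charts and does not depend on $\varepsilon$, and that the leaves $\tilde L_q$ (connected components of $L_q\cap S_p(\varepsilon)$) are carried to the corresponding components at $\gamma(t)$. Everything else — the group structure, the conjugation formula, and the homotopy invariance — is then formal bookkeeping with concatenation of curves, modulo the continuity of $G$ in the curve. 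Since this lemma is cited from \cite{Corro, Radeschi15, Radeschi-notes}, I would at most sketch (a) and refer there for the detailed verification of the foliated compatibility.
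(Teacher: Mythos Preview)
The paper does not give its own proof of this lemma: it is stated with the citation ``See \cite{Corro,Radeschi15,Radeschi-notes}'' and no argument is supplied in the body of the paper. Your sketch is a reasonable outline of the standard construction (normal parallel transport along curves in the leaf, combined with transnormality to ensure the transport is foliated, followed by formal concatenation arguments for (b)--(d)), and you yourself correctly note at the end that one would refer to the cited sources for the detailed verification of the foliated compatibility in (a).
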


The group $\Hol(L,p)$ given in \th\ref{L: action of curves} is called the \emph{holonomy group of the leaf $L$ at $p$}. For any other $q\in L$  due to  $c)$ and $d)$ the groups $\Hol(L,p)$ and $\Hol(L,q)$ are conjugates, and we denote their conjugacy class as $\Hol(L)$. The regular leaves $L$ of $\fol$ for whom $\Hol(L) =[\{Id\}
]$ are called \emph{principal leaves}. 
We denote by $M_{\prin}\subset M$ the set of points contained in the principal leaves. We remark that this is an open and dense subset of $M$.

In order to prove that $H_g^1(M)^\fol$ is infinite dimensional, we define 
the \emph{leaf space} of $(M,g,\fol)$ to be the quotient space induced by the partition $\fol$ equipped with the quotient topology and denote it by $M^\ast=M/\fol$. We denote the quotient map as $\pi\colon M\to M/\fol$. Given a subset $A\subset M$, we denote its image $\pi(A)$ by $A^\ast$.

\begin{proof}[Proof of  \th\ref{Lemma:Infinite dimensional}]
Let $(M,\fol)$ be  a singular Riemannian foliation with closed leaves on the complete manifold $M$. Then the leaf space $M/\fol$ is a complete length metric space. Thus for any open cover $\{U^\ast\}$ of $M/\fol$ we have a subordinate partition of unit $\phi_i^\ast\colon M/\fol\to \R$ such that $\supp(\phi_i^\ast)\subset U_i^\ast$. We can refine the cover $\{U_i^\ast\}$ so that for any $k\in \N$, at least $k$ open neighborhoods of the cover are disjoint. Taking $\pi\colon M\to M/\fol$ and setting $u_j = \phi_j^\ast\circ\pi$ we get the $k$ desired continuous foliated functions with disjoint support.

This approach can be improved to get smooth foliated functions.  We can restrict $\pi\colon M\to M/\fol$ to $M_{\prin} \subset M$ and $M_{\prin}^\ast= M_{\prin} /\fol \subset M/\fol$, so that $\pi\colon M_{\prin}\to M_{\prin}^\ast$. We endow $M_{\prin}$ with $g|_{M_{\prin}}$ the induced metric by the inclusion into $M$. Then $(M_{\prin},g|_{M_{\prin}},\fol|_{M_{\prin}})$ is a regular Riemannian foliation. Moreover the leaf space $M^\ast_{\prin}$ is a manifold. By Theorem \cite{Gromoll} there is a Riemannian metric $g^\ast_{\prin}$ on $M^\ast_{\prin}$  such that the map $\pi\colon (M_{\prin},g|_{M_{\prin}})\to (M^\ast_{\prin},g^\ast_{\prin})$ is actually a Riemannian submersion.

Then by restricting an open cover $\{U^\ast_i\}$ of $M/\fol$ to $M_{\prin}^\ast$, we get a smooth partition of unity $\{\phi_i^\ast\colon M_{\prin}^\ast\to \R\}$ subordinated to the cover $\{U_i^\ast\cap M_{\prin}^\ast\}$. Setting $\phi_i = \phi_i^\ast\circ \pi$ gives us a smooth  partition of unity subordinated to the open cover $\{\pi^{-1}(U_i^\ast\cap M_{\prin}^\ast)\}$ of $M_{\prin}$; 
observe that by construction these functions are constant along the leaves. We can refine this open cover so that for any given $k\in \N$, there are at least $k$ functions $\phi_1,\ldots,\phi_k$ which have disjoint support, contained in $M_{\prin}$.  Setting $U_i = \pi^{-1}(U_i^\ast \cap M_{\prin}^\ast)$, we extend these functions trivially on $M\smallsetminus(U_1\cup\ldots \cup U_k)$ to get the desired smooth functions $u_i$, constant along the leaves with disjoint support.
\end{proof}

The following theorem states how a foliated tubular neighborhood looks like:

\begin{thm}[Slice Theorem in \cite{Radeschi15}]\th\label{T: Slice Theorem}
    Let $(M,\fol)$ be a  singular Riemannian foliation with closed leaves. 
    Given a leaf $L\in\fol$ and $p\in L$, denote by $\mathrm{Tub}^\varepsilon(L)$ the tubular neighborhood of radius $\varepsilon$ of $L$. Then  there exist $\varepsilon>0$ small enough and a $\Hol(L,p)$-principal bundle $P \to L_p$, such that $(\mathrm{Tub}^\varepsilon(L), \fol \cap \mathrm{Tub}^\varepsilon(L))$ is foliated diffeomorphic to 
    \[
        (P\times_{\Hol(L,p)} \D^\perp_p(\varepsilon), P\times_{\Hol(L,p)}\fol_p^\perp).
    \]
    Here $P\times_{\Hol(L,p)} \D^\perp(\varepsilon)$ denotes the quotient $(P\times \D^\perp(\varepsilon))/\Hol(L,p)$ with respect to the product action of $\Hol(L,p)$ on $P\times \D^\perp(\varepsilon)$, i.e. $h(p,v)=(ph,h^{-1}v)$. 
\end{thm}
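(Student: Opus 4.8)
The plan is to realize $\mathrm{Tub}^{\varepsilon}(L)$, $L:=L_p$, through the normal exponential map $\exp^{\perp}$ and then to recognize the resulting normal disk bundle, together with its transverse foliation, as an associated bundle built from the foliated holonomy of \th\ref{L: action of curves}. Since $\fol$ has closed leaves, $L$ is a compact embedded submanifold, so for $\varepsilon$ smaller than the focal radius of $L$ and than the injectivity radius of $M$ along $L$, the map $\exp^{\perp}\colon\nu^{\varepsilon}(L)\to\mathrm{Tub}^{\varepsilon}(L)$ from the $\varepsilon$-disk subbundle of $\nu(M,L)$ is a diffeomorphism, it carries the fiber $\nu^{\varepsilon}_q(L)$ onto the slice $S_q(\varepsilon)$, and, by the definition of the infinitesimal foliation together with \th\ref{T: Infintisimal foliation}, it carries $\fol^{\perp}_q$ onto the partition of $S_q(\varepsilon)$ into connected components of its intersections with the leaves of $\fol$. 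Thus it suffices to produce a principal $\Hol(L,p)$-bundle $P$ over $L$ and a foliated diffeomorphism $\Psi\colon P\times_{\Hol(L,p)}\D^{\perp}_p(\varepsilon)\to\mathrm{Tub}^{\varepsilon}(L)$.

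\emph{Construction of $P$.} For $q\in L$ let $P_q$ be the set of foliated isometries $\phi\colon\D^{\perp}_p(\varepsilon)\to\D^{\perp}_q(\varepsilon)$ of the form $G(\gamma)(1)$ for some piecewise smooth curve $\gamma$ in $L$ from $p$ to $q$, and set $P=\bigsqcup_{q\in L}P_q$ with the obvious projection onto $L$. Part a) of \th\ref{L: action of curves} gives $P_q\neq\emptyset$, and parts b)--c) show that $\Hol(L,p)$ acts freely and transitively on $P_q$ by precomposition, so each fiber is a $\Hol(L,p)$-torsor; here $\Hol(L,p)$ is a compact Lie group, being a closed subgroup of the orthogonal group of $\nu_p(M,L)$ since every foliated isometry of $\D^{\perp}_p(\varepsilon)$ fixes the origin and is therefore linear. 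For local triviality I would fix $q_0\in L$, a geodesically convex neighborhood $U\subset L$ of $q_0$, and $\phi_0\in P_{q_0}$, and take the local section $q\mapsto G(\gamma_q)(1)\circ\phi_0$, where $\gamma_q$ is the short geodesic in $U$ from $q_0$ to $q$; part d) of \th\ref{L: action of curves} makes this independent of the path chosen inside $U$, and smooth dependence of $G(\gamma)(1)$ on $\gamma$ makes the section smooth, giving a principal-bundle chart $U\times\Hol(L,p)\cong P|_U$, $(q,h)\mapsto G(\gamma_q)(1)\circ\phi_0\circ h$.

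\emph{Construction and verification of $\Psi$.} Set $\Psi([\phi,v])=\exp_q(\phi(v))$ for $\phi\in P_q$. This is well defined, since $(\phi\circ h,v)$ and $(\phi,hv)$ have the same image for $h\in\Hol(L,p)$, and it factors as $[\phi,v]\mapsto\phi(v)\in\nu^{\varepsilon}(L)$ followed by $\exp^{\perp}$. The first map is a bundle isomorphism: it is smooth, covers $\mathrm{id}_L$, and on the fiber over $q$ it is a bijection onto $\nu^{\varepsilon}_q(L)=\D^{\perp}_q(\varepsilon)$ because $P_q$ is a $\Hol(L,p)$-torsor and each $\phi\in P_q$ is a bijection $\D^{\perp}_p(\varepsilon)\to\D^{\perp}_q(\varepsilon)$; a fiberwise-bijective smooth bundle map over a connected base is a diffeomorphism, so composing with $\exp^{\perp}$ shows $\Psi$ is a diffeomorphism. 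For the foliated compatibility, note that $P\times_{\Hol(L,p)}\fol^{\perp}_p$ is the descent of the $\Hol(L,p)$-invariant product foliation $\{\mathrm{pt}\}\times\fol^{\perp}_p$ on $P\times\D^{\perp}_p(\varepsilon)$ (invariant precisely because $\Hol(L,p)$ acts by foliated isometries of $(\D^{\perp}_p(\varepsilon),\fol^{\perp}_p)$), so the leaf through $[\phi,v]$ is the image of $\{\phi\}\times\mathcal{L}_v$; since $\phi$ is foliated, $\Psi$ maps this to $\exp_q$ of the $\fol^{\perp}_q$-leaf through $\phi(v)$, which lies in a single leaf of $\fol$. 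Conversely, \th\ref{Lemma:Charts of foliation around a point} (equivalently, transnormality of $\fol$) shows that every plaque of $\fol$ inside $\mathrm{Tub}^{\varepsilon}(L)$ arises in this way and that distinct leaves of $P\times_{\Hol(L,p)}\fol^{\perp}_p$ land in distinct leaves of $\fol$, so $\Psi$ matches the two foliations leaf by leaf.

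The two places where I expect genuine work are: (i) the smoothness and local triviality of $P$, which rest on the smooth dependence of the foliated holonomy $G(\gamma)(1)$ on the curve $\gamma$ and on $\Hol(L,p)$ being an honest compact Lie group --- the technical heart of the infinitesimal-foliation machinery behind \th\ref{L: action of curves}; and (ii) the surjective half of the foliated compatibility, namely that the infinitesimal foliations and the holonomy maps together reconstruct all of $\fol\cap\mathrm{Tub}^{\varepsilon}(L)$ with no extra identifications of leaves, which is where the transnormal structure of $\fol$ near $L$ is really used.
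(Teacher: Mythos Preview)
The paper does not prove this theorem: it is quoted from \cite{Radeschi15} and used as a black box, with only the sentence following the statement clarifying what the leaves of $P\times_{\Hol(L,p)}\fol_p^\perp$ look like. So there is no ``paper's own proof'' to compare against.

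Your sketch is the standard route to the result and is essentially the argument in the cited reference: realize the tube via $\exp^\perp$, build $P$ as the holonomy transport bundle, and identify the normal disk bundle with the associated bundle. Two comments. First, your description of the foliation on $P\times_{\Hol(L,p)}\D^\perp_p(\varepsilon)$ as the descent of ``$\{\mathrm{pt}\}\times\fol_p^\perp$'' is not quite right: the leaves upstairs are $P\times\mathcal{L}_v$, not $\{\mathrm{pt}\}\times\mathcal{L}_v$, so that after quotienting by $\Hol(L,p)$ each leaf surjects onto $L$ --- this matches the paper's description of $L_{[x,v]}$ immediately after the statement, and it is what you need for $\Psi$ to send leaves onto full plaques of $\fol$ in the tube rather than just slices. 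Second, your caveat (i) is exactly the delicate point: showing that $\Hol(L,p)$ is a Lie group and that $P$ is a smooth principal bundle is nontrivial and is the substantive content of \cite{Radeschi15}; the paper you are reading simply imports that fact.
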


Given $[x,v]\in P\times_{\Hol(L)} \D^\perp_p(\varepsilon)$ the leaf $L_{[x,v]}$ of $P\times_{\Hol(L)} \fol_p^\perp$ consists of all the classes $[y,w]\in P\times_{\Hol(L)}\D^\perp_p(\varepsilon)$ such that $(yh,h^{-1}w)\in P\times \mathcal{L}_v$  for some $h\in \Hol(L)$.  

\begin{lemma}\th\label{L: Existence of foliated Partition of unity}
Let $(M,\fol)$ be a closed foliation on a compact manifold. Then for any $\varepsilon>0$ there exists an open cover $\{Z_i\}_{i\in \Lambda}$ of $M$ such that each $Z_i$ is a tubular neighborhood of radius at most $\varepsilon$ and as in \th\ref{T: Slice Theorem} such that there exists a finite subcover $\{Z_i\}_{i = 1}^N$ and a smooth partition of unity $\{\phi_i\}^N_{i = 1}$ adapted to the subcover which is $\fol$-invariant; i.e. for each $p \in M$ if $q\in L_p$ then $\phi_i(q) = \phi_i(p)$. 
\end{lemma}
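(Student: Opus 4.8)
The plan is to construct the open cover by combining the Slice Theorem (\th\ref{T: Slice Theorem}) with compactness of $M$, and then build the $\fol$-invariant partition of unity on the leaf space and pull it back. The key point throughout is that a function on $M$ that is constant along leaves is the same thing as a function on the leaf space $M^\ast = M/\fol$, so the strategy is to perform the partition-of-unity construction downstairs in $M^\ast$ and pull back via $\pi\colon M\to M^\ast$.

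\textbf{Step 1: Construction of the cover.} For each leaf $L\in\fol$ and a chosen $p\in L$, the Slice Theorem gives an $\varepsilon_L>0$ (which we may shrink to be $\le\varepsilon$) and a foliated tubular neighborhood $\mathrm{Tub}^{\varepsilon_L}(L)$. Since $\mathrm{Tub}^{\varepsilon_L}(L)$ is an open saturated (i.e. $\fol$-invariant) neighborhood of $L$, its image $\mathrm{Tub}^{\varepsilon_L}(L)^\ast$ is an open neighborhood of the point $L\in M^\ast$. The collection $\{\mathrm{Tub}^{\varepsilon_L}(L)^\ast\}_{L\in\fol}$ is an open cover of the compact space $M^\ast$ (compact, being the continuous image of the compact $M$), so it admits a finite subcover indexed by leaves $L_1,\dots,L_N$; set $Z_i := \mathrm{Tub}^{\varepsilon_{L_i}}(L_i)$. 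Each $Z_i$ is a tubular neighborhood of radius at most $\varepsilon$ as in \th\ref{T: Slice Theorem}, each $Z_i = \pi^{-1}(Z_i^\ast)$ is saturated, and $\{Z_i\}_{i=1}^N$ covers $M$.

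\textbf{Step 2: Construction of the $\fol$-invariant partition of unity.} One wants a smooth partition of unity on $M^\ast$ subordinate to $\{Z_i^\ast\}$ and then to pull it back and check it is smooth and $\fol$-invariant on $M$. The subtlety is that $M^\ast$ is not a manifold, so "smooth partition of unity on $M^\ast$" must be interpreted as: functions $\phi_i\colon M\to\R$ that are smooth on $M$, $\fol$-invariant, with $\supp\phi_i\subset Z_i$ and $\sum\phi_i\equiv 1$. I would build these directly on $M$. For each $i$, work inside the model $P\times_{\Hol(L_i,p_i)}\D^\perp_{p_i}(\varepsilon_{L_i})$ from the Slice Theorem: take a smooth function on $\D^\perp_{p_i}(\varepsilon_{L_i})$ depending only on the radial distance $|v|$, equal to $1$ on the ball of radius $\varepsilon_{L_i}/2$ and $0$ near the boundary sphere. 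Since the homothety $v\mapsto\lambda v$ is a foliated diffeomorphism (\th\ref{L: Homothetic transformation Lemma}) and radial functions are automatically constant on the leaves of $\fol_{p_i}^\perp$ and invariant under the linear action of $\Hol(L_i,p_i)$ (which acts by foliated isometries fixing the origin), such a radial bump descends to a smooth $\fol$-invariant function $\psi_i$ on $Z_i$, extended by $0$ to all of $M$; it is positive on the smaller tube $\mathrm{Tub}^{\varepsilon_{L_i}/2}(L_i)$. After shrinking the $\varepsilon_{L_i}$ at the outset so that $\{\mathrm{Tub}^{\varepsilon_{L_i}/2}(L_i)\}$ still covers $M$, the sum $\Psi := \sum_{j=1}^N\psi_j$ is smooth, $\fol$-invariant and strictly positive on $M$, and $\phi_i := \psi_i/\Psi$ is the desired $\fol$-invariant partition of unity adapted to $\{Z_i\}_{i=1}^N$.

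\textbf{The main obstacle} is verifying that the locally-defined radial bumps descend to genuinely smooth $\fol$-invariant functions on $M$ and glue consistently on overlaps $Z_i\cap Z_j$. On each $Z_i$ the function $\psi_i$ is manifestly smooth in the slice coordinates of \th\ref{T: Slice Theorem}; one must check it extends smoothly by zero across $\partial Z_i$ (immediate, since it vanishes on a neighborhood of the boundary sphere) and that "constant along leaves of $P\times_{\Hol}\fol_p^\perp$" translates into "constant along leaves of $\fol$" under the foliated diffeomorphism — this is exactly the content of the description of the leaves $L_{[x,v]}$ recorded just after \th\ref{T: Slice Theorem}, together with the fact that $\psi_i$ depends only on $|v|$, which is preserved by the $\Hol(L_i,p_i)$-action and constant along $\mathcal{L}_v$. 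No gluing compatibility is needed beyond smoothness of each $\psi_i$ on $M$, since the final $\phi_i$ is a quotient of globally-defined smooth functions; $\fol$-invariance of $\phi_i$ follows from that of $\psi_i$ and $\Psi$. The only genuinely delicate point, which I would treat carefully, is that a radial function on the infinitesimal disk is constant along the leaves $\mathcal{L}_v$ of the infinitesimal foliation — this holds because $\fol_p^\perp$ is a \emph{Riemannian} foliation with respect to the Euclidean metric (\th\ref{T: Infintisimal foliation}) and its leaves lie in distance spheres about the origin, so $|v|$ is a transnormal first integral.
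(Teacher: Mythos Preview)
Your proposal is correct and follows essentially the same route as the paper: both construct the finite cover from the Slice Theorem plus compactness, then build each bump as a radial function $\tilde{\varphi}_i(\|v\|)$ on the slice disk $\D^\perp_{p_i}$, observe it is $\fol_{p_i}^\perp$-invariant (leaves lie in spheres) and $\Hol(L_i,p_i)$-invariant (isometric action), hence descends to a smooth $\fol$-invariant function on $Z_i\cong P_i\times_{\Hol_i}\D^\perp_i$ that extends by zero, and finally normalize. Your extra care in shrinking so that the half-radius tubes still cover (to guarantee $\Psi>0$) is a detail the paper leaves implicit but is otherwise the same argument.
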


\begin{proof}
For each $p\in M$ consider $\varepsilon(p)>0$ 
such that \th\ref{T: Slice Theorem} holds for the tubular neighborhood of $L_p$ of radius $\varepsilon(p)$. If $\varepsilon(p)>\varepsilon$ then we replace $\varepsilon(p)$ by $\varepsilon$, and note that  \th\ref{T: Slice Theorem} still holds for this value. The collection $\{\mathrm{Tub}^{\varepsilon(p)}(L_p)\mid p\in M\}$  is an open cover of $M$ and since $M$ is compact there exist $Z_i=\mathrm{Tub}^{\varepsilon(p_i)}(L_{p_{i}})$, $i=1,\ldots,N$, such that $M = \cup_{i=1}^N Z_i$.  

For each $1\leqslant i \leqslant N$, let $\psi_i\colon Z_i \to P_i\times_{\Hol_{i}}\D^\perp_i$ be the foliated diffeomorphism given by \th\ref{T: Slice Theorem}.
We will define a $(P_i\times_{\Hol_i}\fol_i)$-foliated function $\bar{\phi}_i\colon P_i\times_{\Hol_{i}}\D^\perp_i\to \R$ and 
then define functions $f_i\colon M\to \R$
\begin{linenomath}
\begin{equation*}
 f_i(q) =
 \begin{cases}
 \bar{\phi}_i\circ\psi_i(q)  &  q\in Z_i\\
 0 & q\not\in Z_i,
 \end{cases}   
\end{equation*} 
\end{linenomath}
so that $\phi_i\colon M\to \R$ defined as 
\[
\phi_i(q) =\frac{f_i(q)}{\sum_{j=1}^N f_j(q)} \quad q \in M, 
\]
will be the desired partition of unity. Observe that $f_i$ is $\fol$-in\-va\-ri\-ant since it is a composition of a foliated diffeomorphism with an $\fol$-in\-va\-ri\-ant function, and thus $\phi_i$ is an $\fol$-in\-va\-ri\-ant function.

For each $1\leqslant i \leqslant N$, to define $\bar \phi_i$, first consider a smooth nonnegative decreasing function $\tilde{\varphi}_i\colon \R\to \R$ such that $\tilde{\varphi}_i(t) = 1$ for $t\leqslant 0$, $\tilde{\varphi}_i(t) = 0$ for $t\geqslant \varepsilon(p_i)$. Let $\overline{\varphi}_i\colon \D^\perp_{p_i}(\varepsilon(p_i))\to \R$ be given by $\overline{\varphi}_i(v) = \tilde{\varphi}_i(\Vert v\Vert)$. Observe that over the spheres in $\D^\perp_{p_i}(\varepsilon(p_i))$ centered at $0$, 
$\overline{\varphi}_i$ is constant. Since the leaves of $\fol_{p_i}^\perp$ are contained in such spheres we conclude that $\overline{\varphi}_{i}$ is $\fol_{p_i}^\perp$-foliated.

Set $\Hol_i = \Hol(L_{p_i},p_i)$, $\D^\perp_i = \D^\perp_{p_i}(\varepsilon(p_i))$, $\fol_i = \fol_{p_i}^\perp$ and let $P_i$ be the total space of the principal $\Hol_i$-bundle from \th\ref{T: Slice Theorem}. Define 
\[\tilde{\phi}_i\colon P_i\times \D_i^\perp\to \R \quad \textrm{as} \quad \tilde{\phi}_i(x,v)= \bar{\varphi}_i(v).\] 
Now note that for any $h\in \Hol_i$ we have $ \tilde{\phi}_i(h(x,v)) = \tilde{\phi}_i(x,v)$. Indeed, this follows from the fact that $\Vert h^{-1}v \Vert = \Vert v\Vert$ as can be seen below, 
\begin{linenomath}
\begin{align*}
    \tilde{\phi}_i(h(x,v)) = & \tilde{\phi}_i(xh,h^{-1}v) = \bar{\varphi}_i(h^{-1}v)
    = \tilde{\varphi}_i(\Vert h^{-1} v\Vert)\\
    = & \tilde{\varphi}_i(\Vert v\Vert)
    = \bar{\varphi}_i(v)
    = \tilde{\phi}_i(x,v).
\end{align*}
\end{linenomath}
Thus we have a well defined map
\[
\bar{\phi}_i\colon P_i\times_{\Hol_{i}}\D^\perp_i\to \R \quad{as}\quad 
\bar{\phi_i}[p,v]= \bar{\varphi_i}(v),
\]
and we will check using local trivializations that it is also smooth.

 Consider the projection map $\pi_i\colon P_i\to L_{p_i}$ of the principal $\Hol_i$-bundle, and denote by $\bar{\pi}_i\colon P_i\times_{\Hol_{i}}\D^\perp_i\to L_{p_i}$ the projection of the associated bundle. Fix $q\in L_{p_i}$, and consider $U_{\alpha i}\subset L_{p_i}$ a sufficiently small open neighborhood of $q$, such that there exists a trivialization $\tau_{\alpha i}\colon U_{\alpha i}\times \Hol_i\to \pi_i^{-1}(U_{\alpha i}) \subset P_i$ of $\pi_i$. Observe that the map $\bar{\tau}_{\alpha i}\colon U_{\alpha i}\times \D^\perp_i \to \bar{\pi}_i^{-1}(U_{\alpha i}) \subset P_i\times_{\Hol_{i}}\D^\perp_i$ given by 
\[
    \bar{\tau}_{\alpha i}(p,v) = [\tau_{\alpha i}(p,\mathrm{Id}),v],
\]
where  $\mathrm{Id}\in \Hol_i$ denotes the identity element, is a trivialization of the associated bundle $\bar{\pi}_i$. Then we have that $\bar{\phi}$ is given over a local trivialization as
\[
    \bar{\phi}_i\circ \bar{\tau}_{\alpha i}(p,v) = \bar{\varphi}_i(v).
\]
Therefore $\bar{\phi}_i\colon P_i\times_{\Hol_{i}}\D^\perp_i\to \R$ can be written as the composition of smooth maps and so it is also smooth.

We check now that $\bar{\phi}_i$ is $(P_i\times_{\Hol_i}\fol_i)$-foliated. 
Consider $[x,v]\in P_i\times_{\Hol_{i}}\D^\perp_i$, and take $[y,w]\in L_{[x,v]}$. Then there exists some $h\in \Hol_i$ such that $hw\in \mathcal{L}_v$. Then
\begin{linenomath}
\begin{align*}
    \bar{\phi}_i[y,w] &= \tilde{\phi}_i(h(y,w))
    = \tilde{\phi}_i(hy,hw)\\
    &= \bar{\varphi}_i(hw)
    = \bar{\varphi}_i(v)\\
   & = \tilde{\phi}_i(x,v)
    = \bar{\phi}_i[x,v].
\end{align*}
\end{linenomath}
This concludes the construction of $\bar{\phi}_i$ and thus the proof of the lemma.
\end{proof}

\subsection{Examples of singular Riemannian foliations}\label{SS: Examples SRF}

Here we pre\-sent in detail some examples of singular Riemannian foliations already mentioned in the introduction.
\bigskip

\textbf{Riemmannian submersions:} Given a Riemannian submersion $\pi\colon M\to B$ we can define a singular Riemannian foliation where the foliation consists of the set of preimages under $\pi$ of the image of $\pi$. We give more details below. 

Recall that a surjective differentiable map $\pi\colon M\to B$ between smooth manifolds is a \emph{submersion} if at any point $p\in M$, the differential map $D_p\pi\colon T_p M\to T_p B$ is a surjective linear map. This implies that the dimension of $M$ is greater than or equal to the dimension of $B$. From now on assume that the dimension of $M$ is strictly greater than the dimension of $B$. Assume that $M$ has a Riemannian metric $g$, and for any $p\in M$ denote by $V(p)$ the subspace of $T_p M$ tangent at $p$ to the fiber $L_p = \pi^{-1}(\pi(p))$ of $\pi$ through $p$, and let $V$ to be the subbunddle over $M$ with fiber equal to $V(p)$.  We say that $\pi$ is a \emph{Riemannian submersion} if for any $X\in V$ the Lie derivative of $g$ in the direction of $X$ satisfies $\mathcal{L}_X g = 0$, that is, the metric $g$ is invariant in the directions tangent to the fibers. Setting $H(p)$ to be the $g$-orthogonal complement of $V(p)$ in $T_p M$, this implies that for any $U,V\in H(p)$ the inner product given as
\[
    h_{\pi(p)}(\pi_\ast U, \pi_\ast V) = g_p(U,V)
\]
is a Riemannian metric on $B$.

\medskip

\textbf{Homogeneous foliations:} Another large family of examples stem from a compact Lie group $G$ acting by isometries on a given Riemannian manifold $(M,g)$. Then, for the partition consisting of the set of orbits of the action of $G$ on $M$, $\fol$, we have that $(M,g , \fol)$ is a singular Riemannian foliation which is known as a \emph{homogeneous} foliation.
\medskip

\textbf{RFKM-foliations:}  By the work of Ferus, Karcher, and Münzner in \cite{FerusKarcherMuenzner1981} there is an infinity of non-homogeneous closed codimension $1$ foliations  on round spheres $\Sp^k\subset \R^{k+1}$  given by Clifford systems. This construction was generalized by Radeschi in \cite{Radeschi2014} and we present some details here. We recall that solutions to \eqref{Main:Yamabe type} such as  \cite{ClappFernandez2017,FernandezPetean(2020),FernandezPalmasPetean(2020)}  do not apply to the foliations in \cite{Radeschi2014} while our main theorems do apply.

Given a real vector space $V$ of dimension $m$, equipped with a positive definite inner-product $\langle\,,\rangle$ the \emph{Clifford algebra} $\Cli(V)$ is the quotient of the tensor algebra $T(V)$ by the ideal generated by $x\otimes x + y\otimes y - 2\langle x,y\rangle 1$, where $1$ is the unit element in $T(V)$. The vector space $V$ embeds naturally into $\Cli(V)$. A \emph{representation} of a Clifford algebra $\Cli(V)$ is an algebra homomorphism $\rho\colon \Cli(V)\to \End(\R^n)$. A \emph{Clifford system} is the restriction of $\rho$ to $V\subset \Cli(V)$, and we denote by $\R_\rho$ the image $\rho(V)$. Given a Clifford system we can find an inner product on $\R^n$, such that for every $v\in V$, the matrix $\rho(v)$ is a symmetric matrix. We endow the space of all symmetric matrices $\Sym(\R^n)$ with the inner product $\langle A, B\rangle = (1/n)\tr(AB)$. With these choices of inner products on $\R^n$ and $\Sym(\R^n)$ the map $\rho\colon V\to \R_\rho\subset\Sym(\R^n)$ is an isometry onto its image. 

\begin{remark}
Given a Clifford system $\rho\colon \Cli(V)\to \End(\R^n)$, the dimension $n$ is even (see \cite{Radeschi2014}).
\end{remark}

Consider the map $\pi_\rho\colon \Sp^{n-1}\to \R_\rho$ that takes $x\in \Sp^{n-1}$ to the unique element $\pi_\rho(x)\in \R_\rho$ which for all $P\in \R_\rho$ satisfies
\[
    \langle \pi_\rho(x), P \rangle = \langle P(x), x\rangle.
\]
The image of this map is contained in the unit disk of $\R_\rho$, denoted by $\D_\rho$ (see \cite[Proposition~2.4]{Radeschi2014}).

\begin{proposition}[Proposition~2.6 in \cite{Radeschi2014}]\th\label{P: Clifford system induce Singular Riemannian foliations}
The set $\fol_\rho = \{\pi_\rho^{-1}(P)\mid P\in \D_\rho\}$ is a singular Riemannian foliation on the unit sphere $\Sp^{n-1}$  with the round metric.
\end{proposition}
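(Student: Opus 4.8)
The plan is to verify directly the two conditions (i) and (ii) of the definition of a singular Riemannian foliation for the partition $\fol_\rho$, after putting $\pi_\rho$ in coordinates. Fix an orthonormal basis $P_0,\dots,P_d$ of $\R_\rho$ (so $d+1=\dim V$); each $P_i$ is a symmetric operator on $\R^n$. Testing the defining relation of $\pi_\rho$ against $P_i$ gives $\pi_\rho(x)=\sum_{i=0}^{d}f_i(x)\,P_i$ with $f_i(x):=\langle P_ix,x\rangle$, so under the identification $\R_\rho\cong\R^{d+1}$ the restriction $\pi_\rho\colon\Sp^{n-1}\to\R^{d+1}$ is cut out by the homogeneous quadratic polynomials $f_0,\dots,f_d$. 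Polarizing the Clifford relation $\rho(v)^2=|v|^2\,\mathrm{Id}$ gives the anticommutation relations $P_iP_j+P_jP_i=2\delta_{ij}\,\mathrm{Id}$, and these, together with the already cited inclusion $\pi_\rho(\Sp^{n-1})\subset\D_\rho$, are the only structural inputs needed. The candidate leaves of $\fol_\rho$ are the connected components of the fibers $\pi_\rho^{-1}(P)$, $P\in\D_\rho$.

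The computational heart of the argument is the identity
\[
\langle\nabla^{\Sp}f_i,\nabla^{\Sp}f_j\rangle \;=\; 4\,(\delta_{ij}-f_if_j)\qquad\text{on }\Sp^{n-1},
\]
obtained from $\nabla^{\Sp}f_i=2(P_ix-f_ix)$ by expanding and using $P_i^2=\mathrm{Id}$ together with the fact that $P_iP_j$ is skew-symmetric for $i\neq j$, so that $\langle P_iP_jx,x\rangle=0$. Thus $\pi_\rho|_{\Sp^{n-1}}$ is a \emph{transnormal map}: the Gram matrix of its differential is $4(I-aa^{\top})$ with $a=\pi_\rho(x)$ (in coordinates), hence $d(\pi_\rho|_{\Sp^{n-1}})$ has maximal rank $d+1$ exactly where $\delta:=|\pi_\rho|$ satisfies $\delta<1$, while the inner products of the $\nabla^{\Sp}f_i$ depend only on $\pi_\rho$. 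Condition~(i) (transnormality of $\fol_\rho$) now follows by the classical argument for transnormal maps, generalizing Wang's theorem on transnormal functions: if a geodesic $\gamma$ of $\Sp^{n-1}$ starts perpendicular to a leaf, then $\gamma'(0)\in\mathrm{span}\{\nabla^{\Sp}f_i(\gamma(0))\}$, and the fact that the Gram matrix is a function of $\pi_\rho$ forces $\gamma'(t)$ to stay in $\mathrm{span}\{\nabla^{\Sp}f_i(\gamma(t))\}$, i.e.\ perpendicular to every leaf it meets. On the open set $\{\delta<1\}$, condition~(ii) is also immediate, since there $\pi_\rho$ is a submersion onto an open subset of $\D_\rho$ and its fibers form a regular foliation with tangent distribution $\ker d\pi_\rho$ locally spanned by smooth vector fields.

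It remains to treat the locus $\Sigma:=\{\delta=1\}=\pi_\rho^{-1}(\partial\D_\rho)$, where the rank of $d\pi_\rho$ drops. Here the relation $\pi_\rho(x)^2=\delta(x)^2\,\mathrm{Id}$, immediate from the anticommutation relations, together with the equality case of Cauchy--Schwarz, identifies $\Sigma$ with $\{x\in\Sp^{n-1}:\rho(v)x=x\ \text{for some unit }v\in V\}$ and shows that the leaf through $x\in\Sigma$ is the unit sphere of the $(+1)$-eigenspace $E_x:=\ker\!\big(\rho(\pi_\rho(x))-\mathrm{Id}\big)$; since $w\mapsto\tfrac{1}{2}(\mathrm{Id}+\rho(w))$ is linear in $w\in V$, the spaces $E_x$ vary smoothly along $\Sigma$, so $\Sigma$ is a smooth submanifold carrying a smooth leaf decomposition. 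The main obstacle --- the only part that goes beyond formal computation --- is precisely the behaviour at $\Sigma$: one has to check that the transnormal identity and the eigenbundle description actually glue the regular foliation on $\{\delta<1\}$ and the leaf decomposition on $\Sigma$ into a genuine smooth singular foliation satisfying both (i) and (ii) across the interface, and with leaves of the expected dimensions, rather than merely a transnormal system, which by the Remark following the definition is only conjecturally equivalent to a smooth one. This is handled by producing the local normal form of $\fol_\rho$ near a point of $\Sigma$ --- equivalently, by invoking the structure theorem for transnormal maps together with the real-analyticity of $\pi_\rho$ --- and it is here that the detailed geometry of Clifford systems, and the fact that $n$ is even, enter the proof.
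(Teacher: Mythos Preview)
The paper does not supply a proof of this proposition at all: it is quoted verbatim as Proposition~2.6 of \cite{Radeschi2014} and used as a black box in the discussion of examples in Section~\ref{SS: Examples SRF}. So there is nothing in the paper to compare your argument against, and strictly speaking no proof is expected of you here.

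That said, your sketch follows the standard route and is largely sound on the regular part: the Gram identity $\langle\nabla^{\Sp}f_i,\nabla^{\Sp}f_j\rangle=4(\delta_{ij}-f_if_j)$ is correct and does make $\pi_\rho$ a transnormal map, which gives condition~(i) on the interior $\{\delta<1\}$, where the submersion structure also handles~(ii). The genuine gap is exactly where you flag it: at the focal set $\Sigma=\{\delta=1\}$ you describe what the leaves are (unit spheres of the $(+1)$-eigenspaces of $\rho(\pi_\rho(x))$) and assert that the regular foliation glues smoothly to this stratum, but you do not actually carry this out. Invoking ``the structure theorem for transnormal maps together with real-analyticity'' or ``producing the local normal form'' is not a proof; in Radeschi's paper this step is done concretely, and it is precisely the content of the result being cited. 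Also, your claim that transnormality of the map yields condition~(i) globally (including across $\Sigma$) needs justification: the usual Wang-type argument propagates $\gamma'(t)\in\mathrm{span}\{\nabla^{\Sp}f_i\}$ only where that span has locally constant dimension, so one must argue separately that a horizontal geodesic hitting $\Sigma$ remains horizontal. In short, your outline is correct in spirit but is a sketch rather than a proof at the singular locus; since the paper treats the proposition as an external citation, this is not a defect relative to the paper's own treatment.
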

 
We refer to the foliation $\fol_\rho$ in \th\ref{P: Clifford system induce Singular Riemannian foliations} as an RFKM-foliation. As proven in \cite[Section 5]{Radeschi2014}, only a finite number of such foliations are homogeneous.

Given a Clifford system $\rho\colon V \subset \Cli(V) \to \R_\rho$, for the map $f\colon \R_\rho\to \R$ defined as $f(P) = 1-2\Vert P \Vert^2$, the preimages of the composition $f\circ\pi_\rho\colon \Sp^{n-1}\to \R$ induce a singular Riemannian foliation of codimension $1$ on the round sphere $\Sp^{n-1}$. These are the singular Riemannian foliations $\fol_{\mathrm{FKM}}$ described in \cite{FerusKarcherMuenzner1981}. 

In general, given two singular Riemannian foliations $\fol_1$ and $\fol_2$ on a given Riemannian manifold $(M,g)$ we say that $\fol_1\subset\fol_2$ if for any leaf $L\in \fol_1$, there exists a leaf $L'\in \fol_2$, such that $L\subset L'$. With this definition observe that $\fol_\rho\subset \fol_{\mathrm{FKM}}$.

\subsection*{Non-orbit like foliations}

With RFKM-foliations we are able to give examples of closed manifolds with singular Riemannian foliations for which the infinitesimal foliation is not given by a group action. Namely, fix $(L,g_L)$ a closed Riemannian manifold and consider the closed disk $(\D^n,g_0,\fol_0)$ with the Euclidean metric, equipped with a singular Riemannian foliation given by the cone of a non-homogeneous RFKM-foliation $\fol_0$ on the sphere $\Sp^{n-1}$. Now consider the foliated  product $(M_1,g_1,\fol_1) = (L\times\D^n,g_L\oplus g_0,\{L\times \mathcal{L}_v\mid \mathcal{L}_v\in \fol_0\})$. Next we glue two copies of $M_1$ along the boundary via the identity map to obtain a new smooth foliated closed manifold $(M,g,\fol)$ whose orbit space is $\Sp^n$, the double of $\D^n$. For any of the  two singular leaves given by  $L\times\{0\}$, in each copy of $M_1$, the infinitesimal foliation  corresponds to the RFKM-foliation $(\Sp^{n-1},g_0,\fol_0)$. Thus $\fol$ cannot be an orbit-like foliation. This implies that the results in \cite{AlexandrinoCavenaghi2021} do not apply to this foliation, but \th\ref{Theorem:Symmetric Criticality}, and in turn \th\ref{Theorem Main} do apply to this foliation.

\subsection*{Other non-homogeneous singular Riemannian foliations}

It is easy to construct closed non simply-connected manifolds with a singular Riemannian foliation which is not given by a group action. For example as suggested in \cite{Galaz-Garcia2015}, we consider $N$ any closed Riemmannian manifold, and  for $n\geq 5$ a smooth manifold $\mathcal{T}^n$ which is homeomorphic but not diffeomorphic to the $n$-torus, i.e. an \emph{exotic torus}. These manifolds exist due to \cite{Hsiang1969}. Now we consider $M = N\times\mathcal{T}^n$ with any product metric. This produces a  Riemannian foliation $\fol = \{\{p\}\times\mathcal{T}^n\mid p\in N\}$ with leaves of the same dimension, whose leaves are not homogeneous spaces and therefore, this foliation cannot be given by a group action. Example~3.6 in \cite{Galaz-Garcia2015} gives a way to construct other non-homogeneous examples. As a particular case of these examples,  we can see that the Klein bottle has a foliation by circles, which is not given by a circle action. Moreover, in \cite{FarrellWu2018} Farrell and Wu gave examples of Riemannian foliations with all leaves of the same dimension. The leaves of one of theses foliations are the fibers of a fiber bundle over $4$-dimensional manifolds, and moreover these fibers  are exotic tori. Again we point out that the previous results in the literature for finding sign-changing solutions to \eqref{Main:Yamabe type} do not apply for these foliations, but \th\ref{Theorem Main} does.

\section{A Principle of Symmetric Criticality for singular Riemannian foliations}\label{Sec:Symmetryc Criticality}

Let $(M,g,\fol)$ be a singular Riemannian foliation with closed leaves. As before, denote by $H_g^1(M)^\fol$ the closure of the smooth foliated real valued functions $\mathcal{C}^\infty(M)^\fol$ under $\Vert\cdot\Vert_{H^1_g(M)}$ and by $H^\perp_\fol$ the orthogonal complement of $H^1_g(M)^\fol$ with respect to  the $H^1_g(M)$-inner product.  The aim of this section is to prove Theorem \ref{Theorem:Symmetric Criticality}. The proof relies on establishing that $\langle f,u \rangle_{L^2_g(M)} = 0$ for any $f\in H^\perp_\fol$ and $u\in H^1_g(M)^\fol$, and using the explicit formula of the derivative of the energy functional $J$.

To our knowledge, Theorem \ref{Theorem:Symmetric Criticality} fills in a part in \cite[Proof of Lemma 4.1]{Henry2019} where a particular case of Theorem \ref{Theorem:Symmetric Criticality} for foliations given by iso\-pa\-ra\-metric functions was used without proof to obtain certain solutions to the subcritical Yamabe equation.

\bigskip

For a metric space $(X,d)$,
given a  a function $f\colon X\to \R$ we define $\mathrm{Lip}(f)$, the \emph{Lipschitz constant of $f$}, as
\[
    \mathrm{Lip}(f) = \sup\left\{\frac{|f(x)-f(y)|}{d(x,y)}\mid x\neq y \in X\right\}.
\]
Denote by 
\[
\mathrm{LIP}(X) = \Big\{f\colon X\to \R\mid \mathrm{Lip}(f)<\infty\Big\}.
\]

We now let $d_M$ be the distance function on $M$ induced by the Riemannian metric $g$ and consider the subset
\[
\mathrm{Lip}_\fol=\{f\in \mathrm{LIP}(M)\mid f \mbox{ is constant along the leaves of $\fol$}\}.
\]

Denote by $L^2_\fol$ the closure of $\mathrm{Lip}_\fol$ in $L_g^2(M)$, and by $(L^2_\fol)^\perp$ the orthogonal complement of $L^2_\fol$ with respect to  the $L^2_g(M)$-inner product.

Recall that the leaf space of $(M,g, \fol)$ is the space $M^\ast=M/\fol$ equipped with the quotient topology induced by the projection $\pi\colon M\to M^\ast$. For $M$ complete and $\fol$ with closed leaves, this map also endows $M^\ast$ with a metric $d^\ast$ and a measure $\mu^\ast =\pi_\ast dV_g$, which is the pushforward of the Riemannian volume form  $dV_g$ of $(M,g)$ under the quotient map $\pi$. Moreover $(M^\ast, d^\ast, \mu^\ast)$ is a complete separable metric space with a non-trivial locally finite Borel measure (\cite[p.~119]{LytchakThobergsson2010}); that is $(M^\ast, d^\ast, \mu^\ast) $ is a \emph{metric measure space} \cite[p.~65]{HeinonenKoskelaShanmugalingamTyson}. So that $\mathrm{LIP}(M^\ast)$ is dense in $L^2(M^\ast, \mu^\ast)$ (see for example \cite[Theorem~4.2.4]{HeinonenKoskelaShanmugalingamTyson}).

Given a function $f\colon M\to \R$ which is constant along the leaves, we define a function $f^\ast\colon M^\ast\to \R$ as $f^\ast(\pi(x)) = f(x)$. Moreover, given a function $h\colon M^\ast\to \R$, we define a function $\hat{h}\colon M\to \R$ which is constant along the leaves of $\fol$ by $\hat{h}(x) = h(\pi(x))$. For a function $f\colon M\to \R$ constant along the leaves we have $f=\widehat f^\ast$.

Using the fact that $\mu^\ast$ is the pushforward measure,  we have well defined functions  $\ast\colon  L_\fol^2\to L^2(M^\ast,\mu^\ast)$ and $\ \widehat{}\,\colon L^2(M^\ast,\mu^\ast)\to L_\fol^2$. Indeed, take $f\in L^2_\fol$. Then it holds that 
\[
\int_{M^\ast} (f^\ast)^2\, \mu^\ast = \int_{M} (f^\ast\circ\pi)^2\, dV_g = \int_M f^2\, dV_g<\infty.
\]
On the other hand, for $h\in L^2(M^\ast,\mu^\ast)$ we have that 
\[
\int_{M} \hat{h}^2\, dV_g = \int_M (h\circ\pi)^2\, dV_g = \int_{M^\ast} h^2\, \mu^\ast<\infty.
\]

\begin{lemma}\th\label{lem-HsubsetL}
For $(M,g, \fol)$ a singular Riemannian foliation with closed leaves, it holds true that $H^1_g(M)^\fol\subset L^2_\fol$.
\end{lemma}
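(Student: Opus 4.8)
The plan is to show the inclusion at the level of the generating dense subsets and then pass to closures. Recall that $H^1_g(M)^\fol$ is, by definition, the closure of $\mathcal{C}^\infty(M)^\fol$ in the norm $\Vert\cdot\Vert_{H^1_g(M)}$, while $L^2_\fol$ is the closure of $\mathrm{Lip}_\fol$ in $L^2_g(M)$. So it suffices to establish two things: (1) $\mathcal{C}^\infty(M)^\fol\subset \mathrm{Lip}_\fol$, and (2) the $H^1_g(M)$-norm is stronger than the $L^2_g(M)$-norm, so that $H^1_g(M)$-convergence implies $L^2_g(M)$-convergence. Granting these, if $u\in H^1_g(M)^\fol$, pick $u_n\in\mathcal{C}^\infty(M)^\fol$ with $\Vert u_n-u\Vert_{H^1_g(M)}\to 0$; then by (1) each $u_n\in\mathrm{Lip}_\fol$, and by (2) $u_n\to u$ in $L^2_g(M)$, so $u$ lies in the $L^2_g(M)$-closure of $\mathrm{Lip}_\fol$, i.e.\ $u\in L^2_\fol$.

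For step (1): a smooth function on a closed (hence compact) Riemannian manifold $M$ is Lipschitz, since its gradient is bounded and $M$ is compact with its Riemannian distance $d_M$ realized by length-minimizing geodesics; concretely $\mathrm{Lip}(f)\le \sup_M|\nabla_g f|_g<\infty$. And if $f\in\mathcal{C}^\infty(M)^\fol$ it is by definition constant along the leaves of $\fol$, which is exactly the second requirement to lie in $\mathrm{Lip}_\fol$. Hence $\mathcal{C}^\infty(M)^\fol\subset\mathrm{Lip}_\fol$. For step (2): directly from \eqref{Eq:Sobolev Norm} we have $\Vert v\Vert_{L^2_g(M)}^2=\int_M v^2\,dV_g\le \int_M |\nabla_g v|_g^2 + v^2\,dV_g=\Vert v\Vert_{H^1_g(M)}^2$, so $\Vert v\Vert_{L^2_g(M)}\le\Vert v\Vert_{H^1_g(M)}$ for all $v\in H^1_g(M)$, which gives the desired implication of convergence.

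I do not expect any genuine obstacle here; the statement is essentially bookkeeping about which dense subset sits inside which, combined with the elementary fact that smooth functions on a compact manifold are Lipschitz and that the $H^1$-norm dominates the $L^2$-norm. The only mild subtlety worth a sentence is making sure the two notions of ``constant along the leaves'' match up: an element of $H^1_g(M)^\fol$ is $dV_g$-a.e.\ constant on leaves (as noted in the excerpt after the definition of $H^1_g(M)^\fol$), but we never need to invoke that — we only use the membership $u_n\in\mathcal{C}^\infty(M)^\fol$ of the approximating sequence, for which ``constant on every leaf'' holds on the nose, and then the limit's leaf-wise behavior is irrelevant since $L^2_\fol$ is defined purely as an $L^2$-closure.
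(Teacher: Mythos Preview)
Your proof is correct and is in fact more direct than the paper's. The paper takes an unnecessary detour through the leaf space: given $u\in H^1_g(M)^\fol$, it passes to $u^\ast\in L^2(M^\ast,\mu^\ast)$, invokes density of $\mathrm{LIP}(M^\ast)$ in $L^2(M^\ast,\mu^\ast)$ to approximate $u^\ast$ by Lipschitz functions $h_n$, and then pulls back via $\hat h_n=h_n\circ\pi\in\mathrm{Lip}_\fol$ to conclude. Your argument bypasses all of this by observing that the very sequence $u_n\in\mathcal{C}^\infty(M)^\fol$ used to \emph{define} membership in $H^1_g(M)^\fol$ already lies in $\mathrm{Lip}_\fol$, so the $L^2$-approximation is for free. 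The paper's route presumably exists because the $\ast$ and $\widehat{\ \ }$ operations are set up for use in the subsequent lemma (where they are genuinely needed), but for this particular statement your approach is both shorter and conceptually cleaner.
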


\begin{proof}
Given $u\in H^1_g(M)^\fol$, since  $\mathrm{LIP}(M^\ast)$ is dense in $L^2(M^\ast, \mu^\ast)$, there exists a sequence $\{h_n\}\subset L^2(M^\ast,\mu^\ast)$ of Lipschitz bounded functions converging to $u^\ast$ in $L^2(M^\ast,\mu^\ast)$.
Applying a change of variable we get that $\hat{h}_n\to \widehat u^\ast=u$ in $L_g^2(M)$. To prove this,  fix $\varepsilon >0$.  Then there exists $N\in \mathbb{N}$, such that for $n\geqslant N$ we have:
\begin{linenomath}
\begin{align*}
    \int_M |\hat{h}_n-u|^2\,dV_g &= \int_M |\hat{h}_n-\widehat{u^\ast}|^2\,dV_g
     = \int_M |h_n\circ\pi-u^\ast\circ\pi|^2\,dV_g\\
    &= \int_{M^\ast} |h_n-u^\ast|^2\,\pi_\ast dV_g
    = \int_{M^\ast} |h_n-u^\ast|^2\,\mu^\ast\\
    &<\varepsilon.
\end{align*}
\end{linenomath}
Thus, $\hat{h}_n\to u$ in $L_g^2(M)$. Finally note that by definition $\hat h_n= h_n \circ \pi$ is the composition of Lipschitz functions, $\pi$ being a $1$-Lipschitz function. Hence,  $\hat{h}_n \in \mathrm{LIP}_\fol$. Thus, we conclude that $u \in L^2_\fol$ as desired.
\end{proof}

\begin{lemma}\th\label{L: H1(M) fol perp in L2(M) fol perp}
For $(M,g, \fol)$ a singular Riemannian foliation with closed leaves, it holds true that $H_\fol^\perp\subset (L^2_\fol)^\perp$.
\end{lemma}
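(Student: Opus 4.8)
The statement is equivalent to: for every $f\in H^\perp_\fol$ the $L^2_g(M)$-orthogonal projection $Pf$ of $f$ onto $L^2_\fol$ vanishes. Granting this, $\langle f,u\rangle_{L^2_g(M)}=\langle Pf,u\rangle_{L^2_g(M)}=0$ for every $u\in L^2_\fol$, and in particular for every $u\in H^1_g(M)^\fol$ by \thref{lem-HsubsetL}, which is the form of the orthogonality used afterwards. So the plan is to prove $Pf=0$.

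The key analytic input I would use is that $\Delta_g$ maps $\mathcal C^\infty(M)^\fol$ into $\mathcal C^\infty(M)^\fol$; as a consequence, by the spectral theory of the invertible self-adjoint operator $-\Delta_g+1$, the subspace $L^2_\fol$ is invariant, $P$ commutes with this operator, and $P$ maps $H^1_g(M)$ into $H^1_g(M)^\fol$. With this in hand, observe first that for $v\in\mathcal C^\infty(M)^\fol$ and any $w\in H^1_g(M)$, integration by parts on the closed manifold $M$ together with $\Delta_g v\in L^2_\fol$ and $w-Pw\in(L^2_\fol)^\perp$ yields
\[
\int_M\langle\nabla_g v,\nabla_g w\rangle_g\,dV_g=-\int_M w\,\Delta_g v\,dV_g=-\int_M (Pw)\,\Delta_g v\,dV_g=\int_M\langle\nabla_g v,\nabla_g(Pw)\rangle_g\,dV_g,
\]
and by density of $\mathcal C^\infty(M)^\fol$ in $H^1_g(M)^\fol$ this identity persists for all $v\in H^1_g(M)^\fol$. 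Apply it with $v=Pf\in H^1_g(M)^\fol$ and $w=f-Pf$: since $P(f-Pf)=0$ the right-hand side is $0$, so $\langle\nabla_g(Pf),\nabla_g(f-Pf)\rangle_{L^2_g(M)}=0$; combined with $\langle Pf,f-Pf\rangle_{L^2_g(M)}=0$ (because $Pf\in L^2_\fol$ and $f-Pf\in(L^2_\fol)^\perp$) this gives $\langle f,Pf\rangle_{H^1_g(M)}=\Vert Pf\Vert^2_{H^1_g(M)}$. On the other hand $Pf\in H^1_g(M)^\fol$ and $f\in H^\perp_\fol$ force $\langle f,Pf\rangle_{H^1_g(M)}=0$. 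Hence $\Vert Pf\Vert_{H^1_g(M)}=0$, i.e. $Pf=0$, which is exactly $f\in(L^2_\fol)^\perp$.

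The step that requires genuine work is the key input that $\Delta_g u$ is $\fol$-invariant whenever $u$ is; everything after it is soft Hilbert-space bookkeeping. This is the only place where the \emph{Riemannian} nature of $\fol$ (the transnormality of $g$), rather than merely its being a foliation, is used, so the transverse geometry of $\fol$ cannot be avoided. To establish it I would work on the principal stratum $M_{\prin}$, which is open, dense and of full $dV_g$-measure in $M$ and on which $\pi\colon M_{\prin}\to M^\ast_{\prin}$ is a Riemannian submersion onto a smooth Riemannian manifold, as in the proof of \thref{Lemma:Infinite dimensional}: there an $\fol$-invariant function is a pullback $v^\ast\circ\pi$, its $g$-gradient is the horizontal lift of $\nabla_{g^\ast}v^\ast$ and hence basic, and one computes that its divergence is again the pullback of a function on $M^\ast_{\prin}$; the extension of this identity across the singular stratum, and the control of the averaging $P$ near the singular leaves, are handled using the Slice Theorem (\thref{T: Slice Theorem}) together with the homothety and holonomy-invariance of the infinitesimal foliation (\thref{L: Homothetic transformation Lemma}, \thref{L: action of curves}). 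I expect this transverse divergence computation to be the main obstacle of the proof.
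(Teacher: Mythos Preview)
Your argument hinges on the claim that $\Delta_g$ maps $\mathcal C^\infty(M)^\fol$ into itself, and this is false for general singular Riemannian foliations. On the principal stratum, where $\pi\colon M_{\prin}\to M^\ast_{\prin}$ is a Riemannian submersion, the Laplacian of a basic function $u=u^\ast\circ\pi$ is
\[
\Delta_g u=(\Delta_{g^\ast}u^\ast)\circ\pi-\langle\nabla_g u,\,H\rangle_g,
\]
where $H$ is the mean curvature vector field of the leaves. The second term is $\fol$-invariant only when $H$ is basic, i.e.\ when the mean curvature is constant along each leaf, and nothing in the hypotheses forces this; it holds for homogeneous foliations and for isoparametric ones, but not in general. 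A concrete regular counterexample: on $M=\T^2$ with $g=f(\theta,\phi)^2\,d\theta^2+d\phi^2$ for a positive smooth $f$ depending nontrivially on $\theta$, the foliation by $\theta$-circles is Riemannian (the curves $s\mapsto(\theta_0,\phi_0+s)$ are geodesics orthogonal to every leaf, since $\Gamma^\theta_{\phi\phi}=\Gamma^\phi_{\phi\phi}=0$), yet for $u=u(\phi)$ one finds $\Delta_g u=u''+(\partial_\phi f/f)\,u'$, which depends on $\theta$. Your step $-\int_M w\,\Delta_g v=-\int_M(Pw)\,\Delta_g v$ therefore fails, and with it the identity $\int_M\langle\nabla_g v,\nabla_g w\rangle_g=\int_M\langle\nabla_g v,\nabla_g(Pw)\rangle_g$; in the torus example the two sides differ by exactly the mean-curvature contribution. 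The ``transverse divergence computation'' you flag as the main obstacle is precisely where this term appears and the argument breaks.

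The paper's proof bypasses the Laplacian entirely. It works through the quotient metric measure space $(M^\ast,d^\ast,\mu^\ast)$: using the correspondence between $L^2_\fol$ and $L^2(M^\ast,\mu^\ast)$ induced by $\pi$, one shows that every element of $L^2_\fol$ is $dV_g$-a.e.\ constant along the leaves. Combined with the characterization of $H^1_g(M)^\fol$ as those $H^1_g(M)$-functions that are a.e.\ leafwise constant, this gives $H^\perp_\fol\cap L^2_\fol=\{0\}$, from which the inclusion is deduced. No second-order transverse computation enters, and the mean curvature of the leaves plays no role.
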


\begin{proof}
We first show that $H^\perp_\fol\cap L^2_\fol\subset L^2_\fol$ consists only of the equivalent class of the zero function. Take $u\in H^\perp_\fol\cap L^2_\fol\subset L^2_\fol$. Since  $u\in L^2_\fol$ there exists a sequence of Lipschitz functions which are constant along the leaves, $\{f_n\}\subset \mathrm{Lip}_\fol$, converging to $u$ with respect to the $L^2_g(M)$-norm. Then $\{f_n^\ast\}$ is a Cauchy sequence in $L^2(M^\ast,\mu^\ast)$; fix $\varepsilon>0$, there exists $N\in \N$ such that for all $m,n\geqslant N$ it holds:
\begin{linenomath}
\begin{align*}
    \varepsilon &> \int_M |f_n-f_m|^2\,dV_g 
     = \int_M |\widehat{f_n^\ast}-\widehat{f_m^\ast}|^2\,dV_g\\
    &= \int_M |f_n^\ast\circ\pi-f_m^\ast\circ\pi|^2\,dV_g
     = \int_{M^\ast} |f_n^\ast-f_m^\ast|^2\, \pi_\ast dV_g\\
     &= \int_{M^\ast} |f_n^\ast-f_m^\ast|^2\, \mu^\ast.
\end{align*}
\end{linenomath}
Since $L^2(M^\ast,\mu^\ast)$ is complete there is a limit $h\in L^2(M^\ast,\mu^\ast)$  of $\{f_n^\ast\}$. As in the proof of the previous lemma,  $\widehat{f_n^\ast}=f_n$  converges to $\hat{h}$ in $L_g^2(M)$. Thus by uniqueness of the limit,  $u$ is equal to $\hat{h}$ $\mu$-a.e., i.e. $u$ is constant along the leaves of $\fol$ up to zero measure. This implies that  the class of $u$ in $H^1_g(M)$ is an element in $H^1_g(M)^\fol$ and since $u \in H^\perp_\fol$ by hypothesis, we conclude that $u$ corresponds to the class of the zero function in $H^1_g(M)$ (recall that $H^1_g(M)= H^1_g(M)^\fol\oplus H^\perp_\fol$). Thus $H^\perp_\fol \cap L^2_\fol = \{0\}$. 

Finally, since $L^2_\fol \cap (L^2_\fol)^\perp$ consists only of the equivalence class of the zero function, by the first paragraph we conclude that $H_\fol^\perp\subset (L^2_\fol)^\perp$.
\end{proof}

\begin{lemma}\label{lem:H1Ortho}
Let $(M,g,\fol)$ be a singular Riemannian foliation with closed leaves. If $f\in H^\perp_\fol$ and $u\in H^1_g(M)^\fol$ then $\langle f,u \rangle_{L^2_g(M)} = 0$.
\end{lemma}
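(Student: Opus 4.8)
The plan is to combine the two preceding lemmas, \th\ref{lem-HsubsetL} and \th\ref{L: H1(M) fol perp in L2(M) fol perp}, which together make the statement almost immediate. First I would recall the setup: $L^2_\fol$ is the closure of $\mathrm{Lip}_\fol$ inside $L^2_g(M)$, and $(L^2_\fol)^\perp$ is its orthogonal complement with respect to the $L^2_g(M)$-inner product, so by definition any element of $(L^2_\fol)^\perp$ is $L^2_g(M)$-orthogonal to every element of $L^2_\fol$.

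Next, given $f\in H^\perp_\fol$, apply \th\ref{L: H1(M) fol perp in L2(M) fol perp} to conclude $f\in (L^2_\fol)^\perp$. Given $u\in H^1_g(M)^\fol$, apply \th\ref{lem-HsubsetL} to conclude $u\in L^2_\fol$. Since $f$ is $L^2_g(M)$-orthogonal to all of $L^2_\fol$ and $u$ lies in $L^2_\fol$, we obtain $\langle f,u\rangle_{L^2_g(M)}=0$, which is exactly the claim.

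There is no real obstacle here: all the work has been front-loaded into \th\ref{lem-HsubsetL} (identifying foliated $H^1$-functions with $L^2$-limits of Lipschitz foliated functions, via the pushforward measure $\mu^\ast$ and density of $\mathrm{LIP}(M^\ast)$ in $L^2(M^\ast,\mu^\ast)$) and \th\ref{L: H1(M) fol perp in L2(M) fol perp} (showing $H^\perp_\fol\cap L^2_\fol=\{0\}$ and deducing the inclusion into $(L^2_\fol)^\perp$). The only point worth stating explicitly is that the orthogonality in \th\ref{L: H1(M) fol perp in L2(M) fol perp} is taken with respect to the $L^2_g(M)$-inner product, which is precisely the pairing appearing in the statement, so no additional compatibility argument between the $H^1$- and $L^2$-structures is needed beyond the two inclusions.
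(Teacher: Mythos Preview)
Your proposal is correct and matches the paper's proof essentially verbatim: the paper also invokes \th\ref{lem-HsubsetL} to place $u\in L^2_\fol$ and \th\ref{L: H1(M) fol perp in L2(M) fol perp} to place $f\in (L^2_\fol)^\perp$, and concludes immediately.
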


\begin{proof}
By Lemma \ref{lem-HsubsetL} and Lemma \ref{L: H1(M) fol perp in L2(M) fol perp}, we have that $f\in (L^2_\fol)^\perp$ and $u\in L^2_\fol$. The lemma then follows.
\end{proof}

We are ready to prove the Principle of Symmetric Criticality, that is, 
that critical points of $J$ restricted to $H_g^1(M)^\fol$ are critical points of $J$.
	
 \begin{proof}[Proof of Theorem \ref{Theorem:Symmetric Criticality}]
 Write any $v\in H^1_g(M)$ as $v=v_1+v_2$ with $v_1\in H_g^1(M)^\fol$ and $v_2\in H_\fol^\perp$. 
  Since $u\in H_g^1(M)^\fol$ is a critical point of $J$ restricted to the space $H^1_g(M)^\fol$, we have that $J'(u)v_1=0$. Hence, 
\begin{linenomath}
 \begin{align*}
 J'(u)v &=J'(u)v_1+J'(u)v_2=J'(u)v_2\\
 &= \langle u,v_2 \rangle_b - \int_M c\vert u\vert^{p-2}uv_2 dV_g, 
\end{align*}
\end{linenomath}
where in the last part we  used the expression \eqref{Eq:Derivative J} for the derivative of $J$. By Lemma \ref{lem:H1Ortho} both terms above equal zero. To see this is true for the first term, by hypothesis we have $\langle u,v_2 \rangle_{H^1_g(M)}=0$ and by Lemma \ref{lem:H1Ortho} we have that $\int_M uv_2 \ dV_g=0$. This and $b$ being an $\fol$-invariant function implies that $\langle u,v_2 \rangle_b=0$. For the second term, since $c$ is $\fol$-invariant we have $c\vert u\vert^{p-2}u\in L_{\fol}^2$ and, we know that $v_2 \in H_\fol^\perp \subset (L^2_\fol)^\perp$. Thus, we can apply again Lemma \ref{lem:H1Ortho} to conclude the claim. 
\end{proof}

\section{Compactness}\label{Sec:Compactness}

In this section we first prove \th\ref{Theorem:Sobolev Embedding} which is a Sobolev embedding theorem for singular Riemannian foliations that generalizes the classical result by Hebey and Vaugon in \cite{HebeyVaugon1997} and Lemma 6.1 in \cite{Henry2019}. Then we apply \th\ref{Theorem:Sobolev Embedding} to show \th\ref{Theorem:Compactness},
which says that $J$ satisfies the $(PS)^\fol_\tau$-condition for every $\tau\in\R$. 

\bigskip

For any $s\geq 1$, consider the Sobolev space $H_g^{1,s}(M)$, which is the closure of $\mathcal{C}^\infty(M)$ with respect to the norm
\[
\Vert u\Vert_{H_g^{1,s}(M)}:=\left(\int_M \vert \nabla_g u\vert^s_g + \vert u\vert^s dV_g\right)^{1/s}.
\]
As before, let $H_g^{1,s}(M)^{\fol}$ be the closure of $\mathcal{C}^\infty(M)^\fol$ under the norm above, so that it is a Banach space. Observe that $H^1_g(M)$ is just 
$H^{1,2}_g(M)$.

Recall that  $\kappa_\fol $ is the smallest dimension of the leaves of $\fol$. We now state the Sobolev embedding theorem needed for the proof of Theorem \ref{Theorem:Compactness}.

\begin{theorem}\th\label{Theorem:Sobolev Embedding}
	Let $\mathcal{F}$ be a singular Riemannian foliation on the $m$-dimensional Riemannain manifold $(M,g)$	and let $s\geq 1$. If any of the following conditions hold,
\begin{equation}\label{C1}
s\geq m-\kappa_\fol \quad \text{and} \quad p\geq 1 \tag{C1}
\end{equation}
or
	\begin{equation}\label{C2}
	s<m-\kappa_\fol  \quad \text{and} \quad p\in\Big[1,\frac{s(m-\kappa_\fol)}{m-\kappa_\fol-s} \Big] \tag{C2},
\end{equation}
then the inclusion map $H_{g}^{1,s}(M)^\fol\hookrightarrow L^{p}_g(M)$ is continuous. For  either $p\geqslant 1$ in \eqref{C1} or $1\leqslant p < \frac{s(m-\kappa_\fol)}{m-\kappa_\fol-s}$ in \eqref{C2}, the map is also compact.

\end{theorem}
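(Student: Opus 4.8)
The plan is to adapt the Hebey--Vaugon localization argument to the foliated setting. The idea is that, near any leaf, an $\fol$-invariant function is really an ordinary function of the normal coordinates only, so it obeys the Euclidean Sobolev inequalities in dimension equal to the codimension of that leaf; since the smallest leaves have dimension $\kappa_\fol$, the worst (largest) such codimension is $m-\kappa_\fol$, which is exactly the \emph{effective dimension} governing the embedding. Concretely, I would first fix, by Lemma \ref{L: Existence of foliated Partition of unity}, a finite cover $M=\bigcup_{i=1}^{N} Z_i$ by slice neighborhoods $Z_i=\Tub^{\varepsilon_i}(L_{p_i})$ together with a smooth $\fol$-invariant partition of unity $\{\phi_i\}_{i=1}^N$ subordinate to it. Because each $\phi_i$ is bounded with bounded gradient one has $\Vert\phi_i u\Vert_{H^{1,s}_g(M)}\le C\Vert u\Vert_{H^{1,s}_g(M)}$, and since $u=\sum_i\phi_i u$ it suffices to establish, for each $i$, an estimate $\Vert\phi_i u\Vert_{L^p_g(M)}\le C_i\Vert\phi_i u\Vert_{H^{1,s}_g(M)}$ (and the corresponding relative compactness) for $u\in\mathcal{C}^\infty(M)^\fol$; the passage to all of $H^{1,s}_g(M)^\fol$ is then a standard density argument.

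For a fixed $i$ I would invoke the Slice Theorem \ref{T: Slice Theorem} to identify $(Z_i,\fol\cap Z_i)$ with the associated bundle $\bigl(P_i\times_{\Hol_i}\D^\perp_{p_i}(\varepsilon_i),\,P_i\times_{\Hol_i}\fol^\perp_{p_i}\bigr)$, where $\Hol_i=\Hol(L_{p_i},p_i)$; this is a fiber bundle over $L_{p_i}$ with fiber the normal disk $\D^\perp_{p_i}(\varepsilon_i)\subset\R^{\,m-k_i}$, $k_i:=\dim L_{p_i}$. From the description of the leaves of $P_i\times_{\Hol_i}\fol^\perp_{p_i}$ given right after Theorem \ref{T: Slice Theorem} one reads off that an $\fol$-invariant $w:=\phi_i u$ supported in $Z_i$ is independent of the $P_i$-variable and therefore descends to a $\Hol_i$-invariant, compactly supported function $\bar w\colon\D^\perp_{p_i}(\varepsilon_i)\to\R$; here $\bar w$ is smooth, as one checks through a local trivialization $\bar\tau_{\alpha i}$ of the associated bundle exactly as in the proof of Lemma \ref{L: Existence of foliated Partition of unity}, where $w\circ\bar\tau_{\alpha i}(q,v)=\bar w(v)$ visibly depends only on $v$. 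Since $w$ equals $\bar w$ in every trivializing chart and the metric on $Z_i$ has distortion bounded, uniformly over the finite cover, relative to a product metric, Fubini's theorem for fiber bundles then yields that $\Vert w\Vert_{L^p_g(M)}$ is comparable to $\Vert\bar w\Vert_{L^p(\D^\perp_{p_i}(\varepsilon_i))}$ and $\Vert w\Vert_{H^{1,s}_g(M)}$ is comparable to $\Vert\bar w\Vert_{W^{1,s}(\D^\perp_{p_i}(\varepsilon_i))}$, with the latter norms taken for the Euclidean metric and Lebesgue measure on the ball.

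It then remains to apply the classical theory on each ball $\D^\perp_{p_i}(\varepsilon_i)\subset\R^{d_i}$, $d_i:=m-k_i$. Since $\kappa_\fol\le k_i$ for every $i$, we have $d_i\le m-\kappa_\fol$; and since $d\mapsto\frac{sd}{d-s}$ is non-increasing on $(s,\infty)$ while every $p\ge1$ is admissible as soon as $d\le s$, the range of exponents $p$ prescribed by \eqref{C1} or \eqref{C2} for the dimension $m-\kappa_\fol$ is contained in the range admissible, by the usual Sobolev embedding, for the dimension $d_i$. Hence $W^{1,s}(\D^\perp_{p_i}(\varepsilon_i))\hookrightarrow L^p(\D^\perp_{p_i}(\varepsilon_i))$ continuously throughout the claimed range; combining with the previous paragraph and summing over $i$ gives continuity of $H^{1,s}_g(M)^\fol\hookrightarrow L^p_g(M)$. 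For $p$ strictly below the relevant critical value, the Rellich--Kondrachov theorem upgrades these local embeddings to compact ones, so a bounded sequence in $H^{1,s}_g(M)^\fol$ produces, after a diagonal extraction over the finitely many charts, a subsequence along which each descended function converges in $L^p$ of its ball; transporting back and recombining with the partition of unity yields a subsequence convergent in $L^p_g(M)$.

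I expect the main obstacle to be the norm comparison in the second paragraph: showing that the $\fol$-invariant $H^{1,s}$-norm on a slice neighborhood coincides, up to constants uniform over the finite cover, with the Euclidean $W^{1,s}$-norm of the descended function on the normal disk. This is precisely where the Slice Theorem, the $\fol$-invariant partition of unity, and the fact that a foliated function is constant along the $L_{p_i}$-directions of the bundle $Z_i\to L_{p_i}$ all come together, and one must track carefully that the Jacobian and distortion constants coming from the bundle structure are bounded independently of $i$. Once the effective dimension $m-\kappa_\fol$ has been correctly identified --- it is forced by the lowest-dimensional leaves --- the remainder is the classical Euclidean Sobolev and Rellich--Kondrachov theory together with routine bookkeeping of exponents.
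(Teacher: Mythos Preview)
Your proposal is correct and follows the same core strategy as the paper: descend $\fol$-invariant functions to functions on a normal slice of codimension $m-k_i\le m-\kappa_\fol$ and invoke the Euclidean Sobolev and Rellich--Kondrachov theorems there. The one structural difference is that the paper separates the two halves: for continuity it uses only the elementary plaque charts of Lemma~\ref{Lemma:Charts of foliation around a point} (a finite cover by $W_i\cong U_i\times V_i$, no partition of unity, just summing local estimates via Lemma~\ref{Lemma:Equivalent Norms}), whereas for compactness it brings in the Slice Theorem~\ref{T: Slice Theorem} and the foliated partition of unity of Lemma~\ref{L: Existence of foliated Partition of unity} to get compactly supported descended functions. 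Your unified route---Slice Theorem plus $\fol$-invariant partition of unity for both parts---works equally well and is arguably tidier; the price is that you must check that the descended function $\bar w$ is globally well-defined on the disk (not just in each trivialization), which follows from the $\Hol_i$-invariance implicit in the leaf description after Theorem~\ref{T: Slice Theorem}. The norm comparison you correctly flag as the main obstacle is exactly what the paper isolates in Lemmas~\ref{lem-equivGradients} and~\ref{Lemma:Equivalent Norms}.
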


The proof of this is virtually the same as the proof of Lemma 6.1 in \cite{Henry2019} or the proof of the Main Lemma in \cite{HebeyVaugon1997}. But we present it below for the convenience of the reader. 
To show the continuity of the inclusion, the idea is to apply \th\ref{Lemma:Charts of foliation around a point}
to any $q\in M$, to get a chart of the form $(W_q,\varphi_q)$.   As $M$ is compact, we can cover it by open sets of the form $W_q$. Then one passes the information of any function in $H_{g}^{1,s}(M)^\fol$ to functions defined on sets $V_q$, where $\varphi_q(W_q)=U_q \times V_q$ as in \th\ref{Lemma:Charts of foliation around a point}, reducing the dimension of $M$ to the dimension of $V_q$, which is $m-k_q$. This allows us to apply the subcritical Sobolev embedding theorem for the sets $V_q$. To prove the compactness of the map,  we also apply  \th\ref{T: Slice Theorem} and  \th\ref{L: Existence of foliated Partition of unity} in order to have a covering of $M$ consisting of tubular neighborhoods and finalize the proof by applying the Rellich-Kondrachov compactness theorem.

\bigskip
 We start by fixing some notation and studying 
the behavior of functions in $H_{g}^{1,s}(M)^\fol$.  Fix $q\in M$, and let $\varphi\colon W \to U\times V$ be a chart around $q$ given by \th\ref{Lemma:Charts of foliation around a point}. For any function  $u\in H_g^{1,s}(M)^\fol$, let $\hat{u}:=u\circ\varphi^{-1}\colon U\times V\to \R$. By the properties of the coordinate chart, for any $x,x'\in U$ and any $y\in V$, we have that $\hat{u}(x,y)=\hat{u}(x',y)$, so we have a well defined function $u_{V}\colon V\to \R$ given by $u_{V}(y):=\hat{u}(x,y)$. Denote by $\nabla_g u$ the gradient of $u$, for $z=(x,y)\in U\times V$ then we denote by $\nabla \hat{u}= (\nabla_x \hat{u},\nabla_y \hat{u})$ the gradient of $\hat{u}$, and by $\nabla_y u_V$ the gradient of $u_V$.

\begin{lemma}\th\label{lem-equivGradients}
Consider a fixed $q\in M$, and $W$ as above. For any open subsets
$U' \subset\subset U$ and $V' \subset\subset V$ and any
$u\in \mathcal{C}^\infty(M)^\fol$
 there exists a constant $C=C(W')$, where $W'= \varphi^{-1}(U' \times V')$, such that 
\[
C^{-1}\vert \nabla_y u_V\vert^2\leq \vert\nabla_g u\vert_g^2\leq C\vert \nabla_y u_V\vert^2
\]
holds over $W'$.
\end{lemma}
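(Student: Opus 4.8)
The plan is to use the coordinate chart $(W,\varphi)$ supplied by \th\ref{Lemma:Charts of foliation around a point} to reduce the claimed two-sided bound to a pointwise comparison between the Riemannian norm of a gradient and the Euclidean norm of a lower-dimensional gradient, and then to make that comparison uniform over $W'$ by a compactness argument.

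First I would write out the local expression for $|\nabla_g u|_g^2$. Put $z=(x,y)\in U\times V$ and let $(g_{ij}(z))$ be the matrix of $g$ in the coordinates $\varphi$, with inverse $(g^{ij}(z))$. Since $u\in\mathcal C^\infty(M)^\fol$, property (2) of \th\ref{Lemma:Charts of foliation around a point} gives $\hat u(x,y)=u_V(y)$, so that $\partial_{x_\alpha}\hat u\equiv 0$ in every $U$-direction while $\partial_{y_a}\hat u=\partial_{y_a}u_V$ in every $V$-direction. Hence the sum $|\nabla_g u|_g^2=\sum_{i,j}g^{ij}(z)\,\partial_i\hat u\,\partial_j\hat u$ collapses to
\[
|\nabla_g u|_g^2=\sum_{a,b=1}^{m-k_q}g^{ab}(z)\,\partial_{y_a}u_V\,\partial_{y_b}u_V=\langle A(z)\nabla_y u_V,\nabla_y u_V\rangle,
\]
where $A(z)$ is the $(m-k_q)\times(m-k_q)$ block of $(g^{ij}(z))$ corresponding to the $V$-directions and $\langle\cdot,\cdot\rangle$ denotes the Euclidean inner product on $\R^{m-k_q}$.

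The second step is the uniform estimate. The inverse metric $(g^{ij}(z))$ is symmetric and positive definite at every point, so its principal submatrix $A(z)$ is symmetric and positive definite as well, and $z\mapsto A(z)$ is continuous on $W$. Because $U'\subset\subset U$ and $V'\subset\subset V$, the closure $\overline{W'}=\varphi^{-1}(\overline{U'}\times\overline{V'})$ is a compact subset of $W$; therefore the continuous functions $z\mapsto\lambda_{\min}(A(z))$ and $z\mapsto\lambda_{\max}(A(z))$ attain a positive minimum $c_1>0$ and a finite maximum $c_2<\infty$ on $\overline{W'}$, which yields $c_1|\xi|^2\le\langle A(z)\xi,\xi\rangle\le c_2|\xi|^2$ for all $z\in\overline{W'}$ and $\xi\in\R^{m-k_q}$. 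Taking $\xi=\nabla_y u_V(y)$ and combining with the displayed identity, the choice $C=C(W'):=\max\{c_2,c_1^{-1}\}$ gives $C^{-1}|\nabla_y u_V|^2\le|\nabla_g u|_g^2\le C|\nabla_y u_V|^2$ on $W'$; note that $C$ depends only on $W'$, through the metric coefficients on $\overline{W'}$, and not on $u$.

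There is no serious obstacle in this argument; the only point worth stating explicitly is that the $V$-block $A(z)$ of the inverse metric inherits positive-definiteness from $(g^{ij}(z))$, which is the standard fact about principal submatrices of positive-definite matrices applied fiberwise. Everything else is the pullback formula for the gradient together with compactness of $\overline{W'}$, exactly as in the corresponding step of \cite{HebeyVaugon1997} and \cite{Henry2019}.
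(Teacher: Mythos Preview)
Your argument is correct and follows the same approach as the paper: exploit the vanishing of the $x$-derivatives of $\hat u$, express $|\nabla_g u|_g^2$ as a quadratic form in $\nabla_y u_V$, and use compactness of $\overline{W'}$ to bound the eigenvalues uniformly. Your version is in fact slightly more careful than the paper's, since you correctly work with the inverse metric $(g^{ij})$ and its $V$-block $A(z)$, whereas the paper's write-up uses $(g_{ij})$ in the quadratic form and contains the line $|\nabla_g u|_g^2=|\nabla\hat u|^2$, which is not literally true; nevertheless the underlying idea and structure are identical.
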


\begin{proof}
Since $u\in \mathcal{C}^\infty(M)^\fol$ we have $\frac{\partial \hat{u}}{\partial x^i}(x,y)=0$. Hence,  $\nabla \hat{u}=(\nabla_x \hat{u},\nabla_y \hat{u})=(0,\nabla_y u_V)$ and thus $\vert \nabla_g u\vert^2_g =\vert \nabla \hat{u}\vert^2=\vert \nabla_y u_V\vert^2 $.

Writing the metric $g$ in coordinates as $g=\sum_{i,j=1}^m g_{ij}dx^i\otimes dx^j$, at each $q'\in W$ the matrix $(g_{ij}(q'))$ is definite positive and symmetric, hence it is diagonalizable and with positive eigenvalues $\lambda_i(q')$. Then, at each $q'$ we have that
\begin{linenomath}
\begin{align*}
\min\{\lambda_i(q')\}\sum_{i=1}^m\left(\frac{\partial \hat{u}}{\partial y^i}(q')\right)^2 &\leq \sum_{i,j=1}^mg_{ij}(q')\left(\frac{\partial \hat{u}}{\partial y^i}(q')\right)\left(\frac{\partial \hat{u}}{\partial y^j}(q')\right)\\
&\leq \max\{\lambda_i(q')\}\sum_{i=1}^m\left(\frac{\partial \hat{u}}{\partial y^i}(q')\right)^2.
\end{align*}
\end{linenomath}
Since the functions $\lambda_i$ are continuous, and $W' \subset \subset W$ then 
\[
A:=\min_{q'\in \overline{W'}}\min\{\lambda_i(q')\},\quad B:=\max_{q'\in \overline{W'}}\max\{\lambda_i(q')\}>0.\] 
If $C>0$ is such that $C^{-1}<A\leq B<C$, substituting everything in the previous inequality, and using that $\vert \nabla \hat{u}\vert^2=\vert \nabla_y u_V\vert^2$ we conclude.
\end{proof}

\begin{lemma}\th\label{Lemma:Equivalent Norms}
For $q\in M$, $W$ and any $W'$ as in the previous lemma, there exists a constant $C=C(W')$, such that, for any $s\geq 1$ and any function $u\in\mathcal{C}^\infty(M)^\fol$
\[
C^{-1}\int_{V'} \vert u_{V}\vert^s dy\leq  \int_{W'} \vert u\vert^s dV_g \leq C \int_{V'} \vert u_V\vert^s dy
\]
and
\[
C^{-1}\int_{V'} \vert \nabla_y u_V\vert^s dy\leq  \int_{W'} \vert \nabla_g u\vert_g^s dV_g \leq C \int_{V'} \vert \nabla_y u_V\vert^s dy.
\]
In particular, there exists $C=C(W')>0$ such that for any $u\in H^{1,s}_g(M)^\fol$,
\begin{linenomath}
\begin{align}\begin{split}
  &C^{-1}\vert u\vert_{L_g^s(W')}\leq \vert u_V\vert_{L^s(V')} \leq C\vert u\vert_{L_g^s(W')},\\\label{Eq:Local Lebesgues Norm Equivalence}
\mbox{ and } &\\ 
 &C^{-1}\vert \nabla_g u\vert_{L_g^s(W')}\leq \vert \nabla u_V\vert_{L^s(V')} \leq C\vert \nabla_g u\vert_{L_g^s(W')}.
\end{split}
\end{align}
\end{linenomath}
\end{lemma}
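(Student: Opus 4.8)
The plan is to prove the two displayed inequalities and then observe that \eqref{Eq:Local Lebesgues Norm Equivalence} follows by taking $s$-th roots and, for the full $H^{1,s}$ statement, by density. First I would recall the setup: $\varphi\colon W\to U\times V$ is the foliation chart from \th\ref{Lemma:Charts of foliation around a point}, $\hat u = u\circ\varphi^{-1}$, and since $u\in\mathcal C^\infty(M)^\fol$ the function $\hat u(x,y)$ is independent of $x\in U$, so it descends to $u_V\colon V\to\R$ with $u_V(y)=\hat u(x,y)$. Writing the Riemannian volume form in these coordinates as $dV_g = \sqrt{\det(g_{ij})}\,dx\,dy$, the key point is that on the relatively compact set $\overline{W'}$ the density $\sqrt{\det(g_{ij})}$ is continuous and bounded between two positive constants; moreover $U'$ has finite Lebesgue measure. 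Since $|u|^s$ in the $z=(x,y)$ coordinates equals $|u_V(y)|^s$ (no $x$-dependence), Fubini gives
\[
\int_{W'}|u|^s\,dV_g \;=\; \int_{U'\times V'} |u_V(y)|^s \sqrt{\det(g_{ij})(x,y)}\,dx\,dy,
\]
and bounding the density above and below and integrating out $x\in U'$ (which contributes the finite factor $|U'|$) yields the first chain of inequalities with a constant $C$ depending on $W'$ through $|U'|$ and the sup/inf of the density. This is the first step.

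Next I would treat the gradient inequality. By \th\ref{lem-equivGradients} there is a constant $C_1=C_1(W')$ with $C_1^{-1}|\nabla_y u_V|^2 \le |\nabla_g u|_g^2 \le C_1|\nabla_y u_V|^2$ pointwise on $W'$; raising to the power $s/2$ gives $C_1^{-s/2}|\nabla_y u_V|^s \le |\nabla_g u|_g^s \le C_1^{s/2}|\nabla_y u_V|^s$. Then I integrate over $W'$ exactly as before — the integrand is again $x$-independent, so Fubini plus the volume-density bounds convert $\int_{W'}\cdots\,dV_g$ into a constant multiple of $\int_{V'}|\nabla_y u_V|^s\,dy$. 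Combining the constant from \th\ref{lem-equivGradients} with the one from the volume density gives the second displayed inequality, with a (possibly enlarged, but still $s$-dependent only through the explicit powers) constant $C=C(W')$; one can absorb the $s$-dependence or simply allow $C$ to depend on $s$ as well, which does not affect the statement since the constant is only claimed to depend on $W'$ for each fixed $s$.

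Finally, \eqref{Eq:Local Lebesgues Norm Equivalence} is immediate: take $s$-th roots in the two chains of inequalities (replacing $C$ by $C^{1/s}$, still a constant depending only on $W'$ for fixed $s$). To extend from $u\in\mathcal C^\infty(M)^\fol$ to arbitrary $u\in H^{1,s}_g(M)^\fol$, I would argue by density: given $u\in H^{1,s}_g(M)^\fol$ choose smooth foliated $u_k\to u$ in $H^{1,s}_g(M)$; the right-hand inequalities show $\{u_k|_{W'}\}$ and $\{\nabla_y (u_k)_V\}$ are Cauchy in $L^s$, hence the restriction $u|_{W'}$ corresponds to a well-defined $u_V\in W^{1,s}(V')$ (independent of the approximating sequence), and passing to the limit in the inequalities preserves them. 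The main obstacle here is essentially bookkeeping rather than a genuine difficulty — one must make sure the constant genuinely depends only on $W'$ (and $s$), which hinges on $\overline{W'}\subset\subset W$ so that the metric coefficients, their eigenvalues, and the volume density are all bounded away from $0$ and $\infty$ there, and on $|U'|<\infty$; both are guaranteed by $U'\subset\subset U$, $V'\subset\subset V$. The only subtlety worth flagging is that the pointwise comparison of $|\nabla_g u|_g$ with $|\nabla_y u_V|$ already used in \th\ref{lem-equivGradients} requires $u$ foliated (so that $\nabla_x\hat u=0$), which is why the lemma is stated for $\mathcal C^\infty(M)^\fol$ and then extended by density.
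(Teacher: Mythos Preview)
Your proposal is correct and follows essentially the same route as the paper: bound the volume density $\sqrt{\det(g_{ij})}$ on $\overline{W'}$, use that $\hat u$ is $x$-independent together with Fubini to reduce to an integral over $V'$ (picking up the factor $|U'|$), invoke \th\ref{lem-equivGradients} pointwise for the gradient terms, then take $s$-th roots and extend by density. Your remark that the constant in fact depends on $s$ as well is accurate and is a small imprecision in the statement that also appears in the paper's own argument.
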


\begin{proof}
For the first inequality, the function $\sqrt{\vert g\vert}\colon \overline{W'}\to \R$ is continuous and attains its maximum and its minimum, say $0<A:=\min \sqrt{\vert g\vert}$ and $0<B=\max \sqrt{\vert g\vert}$. 

First we have, 
\begin{linenomath}
\begin{align*}
\int_{W'}\vert u\vert^s dV_g &= \int_{U'\times V'}\sqrt{\vert g\vert}\vert \hat{u}\vert^s dx dy 
\leq B\int_{U'\times V'}\vert u_V\vert^s dx dy\\
&= B\int_{U'}dx\int_{V'}\vert u_V\vert^s dy
=C_1\int_{V'}\vert u_V\vert^s dy.
\end{align*}
\end{linenomath}
Second we have
\begin{linenomath}
\begin{align*}
\int_{V'}\vert u_V\vert^s dy &= A^{-1}\int_{V'} A\vert u_V\vert^s dy
= C_2 \int_{U'\times V'} A\vert u\vert^s dxdy\\
&\leq C_2 \int_{U'\times V'}\sqrt{\vert g\vert}\vert u\vert^s dxdy
= C_2\int_{W'}\vert u\vert^s dV_g.
\end{align*}
\end{linenomath}

To obtain  the gradient estimate, we use \th\ref{lem-equivGradients}:
\begin{linenomath}
\begin{align*}
\int_{W'}\vert\nabla_g u\vert_g^s dV_g&=\int_{U'\times V'}\sqrt{\vert g\vert}\vert\nabla_g u\vert_g^s dxdy
\leq B_2 \int_{U'\times V'}\vert \nabla_yu_V\vert^{s} dxdy\\
&= B_2 \int_{U'}dx\int_{V'}\vert \nabla_yu_V\vert^{s}dy
=C_3\int_{V'}\vert \nabla_yu_V\vert^{s}dy.
\end{align*}
\end{linenomath}
And again:
\begin{linenomath}
\begin{align*}
&\int_{V'}\vert \nabla_yu_V\vert^{s}dy =A^{-1}\int_{V'}A\vert \nabla_y u_V\vert^{s}dxdy
= C_4\int_{U'\times V'}A\vert \nabla_y u_V\vert^{s}dxdy\\
&\leq C_4\int_{U'\times V'}\sqrt{\vert g\vert}\vert \nabla_y u_V\vert^{s} dxdy
\leq C_5 \int_{U'\times V'}\sqrt{\vert g\vert}\vert \nabla_g u\vert_g^{s}dxdy\\
&=C_5\int_{W'}\vert \nabla_g u\vert_g^s dV_g.
\end{align*}
\end{linenomath}

Taking $C$ to be any real constant such that 
\[
C^{-1} \leq \min\{C_1^{-1/s},C_3^{-1/s}\} \mbox{ and } \max\{C_2^{1/s},C_5^{1/s}\}\leq C
\]
we obtain the inequalities \eqref{Eq:Local Lebesgues Norm Equivalence} for any $u\in\mathcal{C}^\infty(M)^\fol$. Since $\mathcal{C}^\infty(M)^\fol$ is dense in $H^{1,s}_g(M)^\fol$ and in $L_g^s(M)^\fol$, we conclude that  \eqref{Eq:Local Lebesgues Norm Equivalence} holds.
\end{proof}

We are now ready to prove the Sobolev embedding theorem.

\begin{proof}[Proof of Theorem \ref{Theorem:Sobolev Embedding}]
For any $q\in M$, take a chart around $q$, $(W_q,\varphi_q)$, as in  \th\ref{Lemma:Charts of foliation around a point}. 
Take a further open subset $W'_q \subset \subset W_q$ around $q$ as in Lemma \ref{Lemma:Equivalent Norms}. As $M$ is compact, there exist a finite number of points $q_1,\ldots,q_\ell\in M$ such that the charts $(W_i,\varphi_i):=(W'_{q_i},\varphi_{q_i}|_{W'_{q_i}})$ cover $M$. Set $k_i:=k_{q_i}$ and note that $\varphi_i(W_i)=U_i\times V_i$ for some open sets.

To prove the continuity of the inclusion map, take $u\in H_g^{1,s}(M)^\fol$ and consider the function $u_i = u_{V_i}: V_i \to \mathbb R$ as defined in this section, i.e. $u_{V_i}(y):=u\circ\varphi_{i}^{-1}(x,y)$, where $x$ is an  arbitrary element in $U_i$. Then Lemma \ref{Lemma:Equivalent Norms} implies that $u_i\in H^{1,s}(V_i)$. Using  \eqref{Eq:Local Lebesgues Norm Equivalence} twice together with the Sobolev embedding theorems for $V_i\subset \R^{m-k_i}$ \cite[Theorem 5.4]{Adams}, we have the existence of a constant $C_i$ depending on $W_i$ such that
\begin{linenomath}
\begin{align*}
\vert u\vert_{L^{p_i}_g(W_i)}&\leq C_i\vert u_i\vert_{L^{p_i}(V_i)}\\
&\leq C_i\left(\vert \nabla_y u_i\vert_{L^s(V_i)}+\vert u_i\vert_{L^s(V_i)}\right)\\
&\leq C_i\left(\vert \nabla_g u\vert_{L^s_g(W_i)}+\vert u\vert_{L^s_g(W_i)}\right)
\end{align*}
\end{linenomath}
if either $s \geq m-k_i$ and $p_i\geq 1$,  
or $s < m-k_i$ and $1 \leq p_i\leq s(m-k_i)/(m-k_i-s)$.

We now prove that  for every index $i = 1,\ldots,\ell$, 
either $s \geq  m-k_i$ or $s < m-k_i$ and $p\leq s(m-k_i)/(m-k_i-s)$ holds. 
If \eqref{C1} holds, then by the definition of $\kappa_\fol$ we get $s \geq m-k_i$ for every $i=1,\ldots,\ell$.  If  \eqref{C2} holds, fix $i$. Then
either $s \geq m-k_i$ or $s< m-k_i$. Assume that  $s < m-k_i$. Then the fact $s\geqslant 0$ and the definition of $\kappa_\fol$ yields the inequality $s(m-\kappa_\fol)/(m-\kappa_\fol-s)\leq  s(m-k_i)/(m-k_i-s)$. 

Therefore, by taking $C\geqslant C_i$ for all $i$, 
\begin{linenomath}
\begin{align*}
\vert u\vert_{L_g^p(M)} &\leq \sum_{i=1}^\ell \vert u\vert_{L_g^p(W_i)}
\leq C  \sum_{i=1}^\ell \vert \nabla_g u\vert_{L_g^s(W_i)} + \vert u\vert_{L_g^s(W_i)}\\
&\leq C\sum_{i=1}^\ell \Vert u\Vert_{H_g^{1,s}(M)}
\leq C\ell \Vert u\Vert_{H_g^{1,s}(M)}.
\end{align*}
\end{linenomath}
Hence the embedding $H_g^{1,s}(M)^\fol\hookrightarrow L^{p}_g(M)$ is continuous. 

Now we prove the compactness of the embedding. Without loss of generality assume that each open set $W_i = U_i\times V_i$ is a trivialization of the disk bundle $P_i\times_{\Hol(L_i)} \D_i^\perp \to L_i$ given by \th\ref{T: Slice Theorem} centered at some $p_i\in M$. We can identify each element of the finite open cover $ \mathcal{A} = \{\mathrm{Tub}^{\varepsilon(p_i)}(L_{p_i})\}_{i=1}^N$ of $M$, with $P_i\times_{\Hol(L_i)} \D_i^\perp$. By \th\ref{L: Existence of foliated Partition of unity} there exists a smooth foliated partition of unity $\{\phi_i\}$ subordinated to $\mathcal{A}$. For each index $i$, due to the identification we may assume that $\phi_i$ is defined over $P_i\times_{\Hol(L_i)} \D_i^\perp$. 

Let $(u_j)_{j\in\mathbb{N}}$ be a bounded sequence in $H^{1,s}_{g}(M)^\fol$ and define $u_{ji}(y):=(\phi_i\circ\varphi_i^{-1})(u_j\circ\varphi_i^{-1})(x,y)$, for arbitrary $x\in U_i$. Observe $u_{ji}$ is well defined and is compactly supported in $V_i$ and that the sequence $(u_{ji})_{j\in\mathbb{N}} $ is bounded in $H^{1,s}(V_i)$. Then, if either $s\geq m-\kappa_\fol$ or if $s<m-\kappa_\fol$ and $1\leq p<\frac{s(m-\kappa_\fol)}{m-\kappa_\fol-s}$, we have that $\frac{1}{p}>\frac{1}{s}-\frac{1}{m-k_i}$ for every $i=1,\ldots \ell$. Hence, the embedding $H_0^{1,s}(V_i)\hookrightarrow L^{p}(V_i)$ is compact by the Rellich-Kondrachov theorem \cite[Theorem 6.5]{Adams} and for each $i=1,\ldots,\ell$, there is a subsequence of $(u_{ji})_{j\in\mathbb{N}}$, which we denote in the same way, which is a Cauchy sequence in $L^p(V_i)$. Inequalities \eqref{Eq:Local Lebesgues Norm Equivalence} and the fact that $\{\phi_i\}_{i=1}^\ell$  are bounded yield that $(u_j)_{j\in\mathbb{N}}$ has a Cauchy subsequence in $L^p_g(M)$ and, hence, this subsequence converges in this space.
\end{proof}

We now show that $J$ satisfies the $(PS)^\fol_\tau$-condition for every $\tau\in\R$. 

\begin{proof}[Proof of Theorem \ref{Theorem:Compactness}] The proof is standard and consists in showing that any $(PS)^\fol_\tau$-sequence for $J$, $(v_n)_{n\in \N}\subset H_g^1(M)^\fol$, is bounded in $H_g^1(M)$, so there exists $v\in H_g^1(M)$ such that up to a subsequence, $(v_{n})_{n\in \N}$ converges weakly to $v$ in $H_g^1(M)^\fol$. The next step is to apply Theorem \ref{Theorem:Sobolev Embedding} to show that   $\Vert v_{n}\Vert_{H^1_g(M)} =\Vert v_{n}\Vert_{H^1_g(M)}\to\Vert v\Vert_{H^1_g(M)}$, which implies that $v_n\to v$ strongly in $H_g^1(M)^\fol$. We provide more details below. 
 
Let $(v_n)_{n\in \N}\subset H_g^1(M)^\fol$ 
be a $(PS)^\fol_\tau$-sequence for $J$ in $H_g^1(M)^\fol$, i.e. $J(v_n)\to \tau$ and $J'(v_n)\to 0$ in $(H_g^1(M)^\fol)^{\ast}$. We first uniformly bound $\Vert v_n\Vert_b$ in terms of $J(v_n)$ and $J'(v_n)v_n$.

From the definition of $J$ we get 
	\[
	J'(v_n)v_n =\Vert v_n \Vert^2_b - \vert v_n\vert^p_{c,p}.
	\]
	Therefore, we have
\begin{linenomath}
\begin{align*}
J(v_n) - \tfrac{1}{p}J'(v_n)v_n &= \tfrac{1}{2}\Vert v_n\Vert^2_b -\tfrac{1}{p}\vert v_n\vert^p_{c,p} -\tfrac{1}{p}\Vert v_n\Vert^2_b +\tfrac{1}{p}\vert v_n\vert^p_{c,p}\\
&=\tfrac{p-2}{2 p}\Vert v_n\Vert^2_b.
\end{align*}
\end{linenomath}

On the other hand, since $J'(v_n)\to 0$ in $(H_g^1(M)^\fol)^{\ast}$, there exists a positive constant $C_1$  such that for large $n\in \N$ the following holds
\[
	\vert J'(v_n)v_n\vert \leq \Vert J'(v_n)\Vert_{H_g^1(M)^{\ast}}\Vert v_n\Vert_b \leq C_1\Vert v_n \Vert_b.
\]
Since $J(v_n)\to \tau$, there exists a positive constant $C_2$ such that for large $n\in \mathbb N$ the following holds
\[
	\vert J(v_n) \vert \leq C_2.
\]	
Therefore for large $n\in \N$ we obtain that 
\begin{linenomath}
\begin{align*}
    \tfrac{p-2}{2p}\Vert v_n \Vert^2_b &= J(v_n) - \tfrac{1}{p} (J)'(v_n)v_n 
	\leqslant \big\vert  J(v_n) - \tfrac{1}{p} (J)'(v_n)v_n  \big\vert \\
	&\leqslant \vert J(v_n)\vert + \tfrac{1}{p} \vert(J)'(v_n)v_n \vert
	\leqslant C_2 + \tfrac{C_1}{p}\Vert v_n\Vert_b. 
\end{align*}
\end{linenomath}
This, and the fact that the norm $\Vert\cdot\Vert_b$ is equivalent to the standard norm of $H^1_g(M)$, imply that $(v_n)_{n\in \N}$ is bounded in $H^1_g(M)$.

Since $(v_n)_{n\in \N}$ is bounded in $H_g^1(M)^\fol$, there exists $v\in H_g^1(M)$ such that up to a subsequence, also denoted by $(v_{n})_{n\in \N}$, it converges weakly to $v$ in $H_g^1(M)$. As $H_g^1(M)^\fol$ is weakly closed in $H_g^1(M)$, $v$ is $\fol$-invariant. To show that $v_n\to v$ strongly in $H_g^1(M)^\fol$, it suffices to show that $\Vert v_{n}\Vert_{H^1_g(M)}\to\Vert v\Vert_{H^1_g(M)}$. 
 
 As $v_{n}\rightharpoonup v $ weakly in $H_g^1(M)^\fol$, we have that 
 \[
 \langle v_n, v \rangle_{H^1_g(M)} =
 \Vert v\Vert^2_{H^1_g(M)} + o(1).
 \] 
 It follows that
 \begin{linenomath}
\begin{align*}
	J'(v_n)(v_{n}-v)  = & \langle v_{n} , v_{n}-v\rangle_{H^1_g(M)} - \int_{M} c\,\vert v_{n}\vert^{p-2}v_{n}(v_{n} - v)dV_g\\
		= & \Vert v_{n}\Vert^2_{H^1_g(M)} - \Vert v\Vert^2_{H^1_g(M)} + o(1)  \\
		&\qquad - \int_{M} c\,\vert v_{n}\vert^{p-2}v_{n}(v_{n} -  v)dV_g,
\end{align*}
\end{linenomath}
and so, $\Vert v_{n}\Vert_{H^1_g(M)} \to \Vert v\Vert_{H^1_g(M)}$ as $n\to\infty$ if both quantities to the left and to the right of the previous expression converge to zero. 

We first deal with the term to the left. 
Since $(v_{n})_{n\in \N}$ is bounded, then  $(v_{n}-v)_{n\in \N}$ is also bounded, say by a positive constant $C_3$, and thus we get,
\begin{linenomath}
\begin{align}
\begin{split}
	\left\vert J'(v_n)(v_{n}- v) \right\vert &\leq \Vert J'(v_n)\Vert_{(H_g^1(M)^\fol)^{\ast}}\Vert v_{n} - v \Vert_{H_g^1(M)}\\
	&\leq C_3\Vert J'(v_n)\Vert_{(H_g^1(M)^\fol)^{\ast}} \rightarrow 0.\label{Lemma:PS condition J:Eq 1}
\end{split}
\end{align}
\end{linenomath}

For the term to the right, we proceed as follows. Observe that  by \th\ref{Theorem:Sobolev Embedding} when $2\geqslant m-\kappa_\fol$,  then for any $2< p < 2^\ast_m$ the map $H^{1}_g(M)^\fol\to L^p_g(M)$ is compact. In the case when $m-\kappa_\fol>2$ due to the fact that $\kappa_\fol\geqslant 1$ we get that $2^\ast_m < 2(m-\kappa_\fol)/(m-\kappa_\fol-2)$, and thus for $1\leqslant p< 2(m-\kappa_\fol)/(m-\kappa_\fol-2)$ the map $H^{1}_g(M)^\fol\to L^p_g(M)$ is also compact by \th\ref{Theorem:Sobolev Embedding}.

Hence, up to a subsequence, we have that $(v_n)_{n\in \N}$ converges weakly to $v$ in $L_g^{p}(M)$. Using this, Hölder's inequality for $\tfrac{p-1}{p}+\tfrac{1}{p}=1$, Sobolev inequality for the embedding $H_g^1(M)\hookrightarrow L_g^{p}(M)$, the fact that $(v_n)_{n\in \N}$ is bounded in $H_g^1(M)^\fol$ and $c\leq \Vert c\Vert_{L^{\infty}_g(M)}$, we get that
\begin{linenomath}
\begin{align}\label{Lemma:PS condition J:Eq 2}
\begin{split}
\Big\vert  \int_{M} c\, \vert v_{n}&\vert^{p-2} v_{n}( v_{n} - v ) dV_g  \Big\vert 
\leq  \int_{M} c\,\vert v_{n}\vert^{p-1} \vert v_{n} - v \vert dV_g\\
&\leq \Vert c\Vert_{L^\infty_g(M)} \vert v_{n}\vert_{L^p_g(M)}^{p-1} \vert v_{n} - v \vert_{L^{p}_g(M)} \\
&\leq  \Vert c\Vert_{L^\infty_g(M)} \vert v_{n} - v \vert_{L^p_g(M)} \left( C_1^{p-1}\Vert v_{n} \Vert^{p-1}_{H^1_g(M)}\right)\\
&\leq C \vert v_{n} - v \vert_{L^p_g(M)}\to 0, 
\end{split}
\end{align} 
\end{linenomath}
where $C$ denotes some positive constant.

Finally, from the expression we have for $J'(v_n)(v_{n}-v)$, \eqref{Lemma:PS condition J:Eq 1} and \eqref{Lemma:PS condition J:Eq 2}, it follows that $\Vert v_{n}\Vert_{H^1_g(M)} \to \Vert v\Vert_{H^1_g(M)}$ as $n\to\infty$, as we wanted to prove.
\end{proof}

\section{Variational Principle}\label{Section:Variational Principle}

This section is devoted to the proof of Theorem
\ref{Theorem:Variational Principle}. We adapt the proof of \cite[Theorem~2.3]{ClappFernandez2017} (see also  \cite{ClappPacella2008,ClappWeth2005}) to our context using Lemma \ref{L: Aux PDE's} below.

\bigskip
In what follows, $\vert\cdot\vert_{L^{\infty}_g(M)}$ denotes  the usual $L^\infty$ norm. Recall that $c,b\in \mathcal{C}^\infty(M)$ are foliated functions with $c>0$, and that $\vert u\vert_{c,p}$ as defined in \eqref{eq-norm-cp} is a norm in $L^p_g(M)$ equivalent to the standard norm of $L^p_g(M)$. 

For any $\theta>\max\{ 1,\mu,\vert b\vert_\infty\}$, where $\mu$ was defined in \eqref{eq:coercivity},  we have a well defined interior product
\[
\langle u,v\rangle_{\theta}:=\int_M \langle \nabla_g u,\nabla_g v\rangle_g + \theta\, uv\; dV_g,\quad u,v\in H_g^1(M),
\]
which induces a norm, $\Vert\cdot\Vert_\theta$, that is equivalent to the standard norm of $H_g^1(M)$.

Recall that we are studying the functional $J\colon H^1_g(M)\to \mathbb{R}$, given by 
\begin{linenomath}
\[
J(u) = \tfrac{1}{2}\Vert u\Vert^2_b- \tfrac{1}{p}\vert u\vert_{c,p}^p,
\]
\end{linenomath}
restricted to the space $H_g^1(M)^\fol$, and where $\Vert \cdot \Vert_b$ is the norm induced by the inner product defined in \eqref{eq-innerprod-b}. For $p\in(2,2^\ast_m]$, this functional is of class $C^2$ and its gradient with respect to the interior product $\langle \cdot,\cdot\rangle_\theta$ defines a $C^1$ function $\nabla J\colon H_g^1(M)\to H_g^1(M)$. We next show that actually $\nabla J\colon H_g^1(M)^\fol\to H_g^1(M)^\fol$. To do so, we decompose the 
gradient of $J$ at $u\in H^1_g(M)$ as follows.

First, note that with respect to $\langle\cdot,\cdot\rangle_\theta$,
 $\nabla J(u)$, is the vector in $H^1_g(M)$ which satisfies:
\begin{linenomath}
\begin{align*}
     \langle \nabla J(u),v\rangle_\theta &=   J'(u)v\\
    &= \langle u,v\rangle_\theta -\int_M (\theta-b)uv dV_g -\int_M c\vert u\vert^{p-2}uvdV_g.
\end{align*}
\end{linenomath}
for any $v\in H^1_g(M)$ (see \ref{Eq:Derivative J}). Then, we need to apply the following result that we prove in Section~\ref{sub-appendix}.

\begin{lemma}\th\label{L: Aux PDE's}
Let $f\in L_g^2(M)$  and $\theta$ a nonnegative constant. Then the non-homogeneous problem 
\begin{linenomath}
\begin{align*}
-\Delta v +\theta v = f,\quad\text{on }M,
\end{align*}
\end{linenomath}
admits a unique solution in $H^1_g(M)$. Moreover, if $f$ is $\fol$-invariant, then the solution lies in $H_g^1(M)^\fol$.
\end{lemma}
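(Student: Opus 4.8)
The plan is to separate the two assertions. Existence and uniqueness of a weak solution will be a direct application of the Riesz representation theorem to the bilinear form $\langle\cdot,\cdot\rangle_\theta$, and the $\fol$-invariance of that solution will then be extracted from the orthogonal splitting $H^1_g(M)=H^1_g(M)^\fol\oplus H^\perp_\fol$ together with the orthogonality Lemmas~\ref{lem-HsubsetL}, \ref{L: H1(M) fol perp in L2(M) fol perp} and~\ref{lem:H1Ortho} of Section~\ref{Sec:Symmetryc Criticality}. Throughout we take $\theta>0$, which is the case needed in the application, where $\theta>\max\{1,\mu,|b|_\infty\}$; for such $\theta$, as recorded just before the statement, $\langle u,v\rangle_\theta=\int_M\langle\nabla_g u,\nabla_g v\rangle_g+\theta\,uv\,dV_g$ is an inner product on $H^1_g(M)$ whose norm $\|\cdot\|_\theta$ is equivalent to $\|\cdot\|_{H^1_g(M)}$.

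\textbf{Step 1: existence and uniqueness.} By Cauchy--Schwarz, $\big|\int_M f\varphi\,dV_g\big|\le|f|_{L^2_g(M)}\,|\varphi|_{L^2_g(M)}\le C\,|f|_{L^2_g(M)}\,\|\varphi\|_\theta$, so $\varphi\mapsto\int_M f\varphi\,dV_g$ is a bounded linear functional on $(H^1_g(M),\|\cdot\|_\theta)$. The Riesz representation theorem then produces a unique $v\in H^1_g(M)$ with
\[
\langle v,\varphi\rangle_\theta=\int_M f\varphi\,dV_g\qquad\text{for all }\varphi\in H^1_g(M),
\]
which is precisely the statement that $v$ is the unique weak solution of $-\Delta v+\theta v=f$ on $M$.

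\textbf{Step 2: $\fol$-invariance.} Suppose now $f$ is $\fol$-invariant. I would first record two observations. First, the decomposition $H^1_g(M)=H^1_g(M)^\fol\oplus H^\perp_\fol$, which is orthogonal for $\langle\cdot,\cdot\rangle_{H^1_g(M)}$, is also orthogonal for $\langle\cdot,\cdot\rangle_\theta$: for $u\in H^1_g(M)^\fol$ and $w\in H^\perp_\fol$,
\[
\langle u,w\rangle_\theta=\langle u,w\rangle_{H^1_g(M)}+(\theta-1)\langle u,w\rangle_{L^2_g(M)}=0,
\]
the first summand being $0$ by definition of $H^\perp_\fol$ and the second by Lemma~\ref{lem:H1Ortho}. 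Second, the $\fol$-invariant function $f$ lies in $L^2_\fol$: passing to the leaf space gives $f^\ast\in L^2(M^\ast,\mu^\ast)$, and since $\mathrm{LIP}(M^\ast)$ is dense in $L^2(M^\ast,\mu^\ast)$ one may choose Lipschitz functions $h_n\to f^\ast$ there, whence $\hat h_n\in\mathrm{Lip}_\fol$ and $\hat h_n\to\widehat{f^\ast}=f$ in $L^2_g(M)$, exactly as in the proof of Lemma~\ref{lem-HsubsetL}; consequently, by Lemma~\ref{L: H1(M) fol perp in L2(M) fol perp}, $\langle f,w\rangle_{L^2_g(M)}=0$ for all $w\in H^\perp_\fol$. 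Now write $v=v_1+v_2$ with $v_1\in H^1_g(M)^\fol$ and $v_2\in H^\perp_\fol$. Then $\langle v_1,v_2\rangle_\theta=0$ by the first observation, $\int_M f\,v_2\,dV_g=0$ by the second, and testing the weak formulation against $\varphi=v_2$ yields
\[
\|v_2\|_\theta^2=\langle v_2,v_2\rangle_\theta=\langle v,v_2\rangle_\theta-\langle v_1,v_2\rangle_\theta=\langle v,v_2\rangle_\theta=\int_M f\,v_2\,dV_g=0 .
\]
Hence $v_2=0$, i.e.\ $v=v_1\in H^1_g(M)^\fol$. Uniqueness within $H^1_g(M)^\fol$ follows from Step 1.

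\textbf{Expected main obstacle.} Both steps are essentially routine once the orthogonality lemmas of Section~\ref{Sec:Symmetryc Criticality} are in place; the one point that requires care is the second observation of Step~2, namely that a merely $L^2$, leaf-wise constant $f$ actually lies in the closure $L^2_\fol$ of the \emph{Lipschitz} foliated functions, since it is this fact that forces $\langle f,v_2\rangle_{L^2_g(M)}=0$. This is where the metric measure space structure of the leaf space $(M^\ast,d^\ast,\mu^\ast)$ and the density of Lipschitz functions in $L^2(M^\ast,\mu^\ast)$ enter. (For $\theta=0$ the form $\langle\cdot,\cdot\rangle_\theta$ is coercive only on the subspace of functions of vanishing $dV_g$-mean, on which the same argument applies; this case plays no role in the application.)
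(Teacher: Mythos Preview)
Your proof is correct and follows essentially the same route as the paper: Riesz representation for existence/uniqueness, the observation that the $H^1_g(M)^\fol\oplus H^\perp_\fol$ decomposition is also $\langle\cdot,\cdot\rangle_\theta$-orthogonal via Lemma~\ref{lem:H1Ortho}, and the vanishing $\int_M f w\,dV_g=0$ for $w\in H^\perp_\fol$ via Lemma~\ref{L: H1(M) fol perp in L2(M) fol perp}. Your execution is in fact a touch more direct than the paper's, which packages the same computation by introducing an auxiliary functional $J_\theta(u)=\tfrac12\|u\|_\theta^2-\int_M fu\,dV_g$ and showing that the $\fol$-component of $v$ is again a critical point; you simply test the weak formulation against $v_2$ and read off $\|v_2\|_\theta^2=0$. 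You are also a bit more careful than the paper in spelling out why an $\fol$-invariant $f\in L^2_g(M)$ lies in $L^2_\fol$, and in flagging the degenerate case $\theta=0$.
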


Thus, for each $u\in H_g^1(M)$, by applying \th\ref{L: Aux PDE's} to $f$ equal to $(\theta-b)u$ and $c\vert u\vert^{p-2}u$, which are elements of $L_g^2(M)$, there exist unique solutions $Lu$ and $Gu$ to the non-ho\-mo\-ge\-ne\-ous linear problems
\begin{linenomath}
\begin{align*}
-\Delta_{g}(Lu) + \theta Lu &  =(\theta - b) u,\quad\text{on }M\\
-\Delta_{g}(Gu) + \theta Gu &  =c \left\vert
u\right\vert ^{p-2}u,\quad\text{on }M. 
\end{align*}
\end{linenomath}
Note that these solutions are uniquely determined  by the relations
\begin{linenomath}
\begin{align}
\begin{split}
\langle Lu,v\rangle_\theta & = \int_M(\theta - b) uv\; dV_g,\quad \label{Eq:Lu and Gu} \\ \langle Gu,v\rangle_\theta & = \int_M c\vert u\vert^{p-2}uv\; dV_g, 
\end{split}
\end{align}
\end{linenomath}
for every $v\in H_{g}^{1}(M)$. It follows that 
\[
\nabla J(u)=u-Lu-Gu, \quad\text{ for every }u\in H^1_g(M).
\]
If $u\in H^1_g(M)^\fol$, then also $(\theta-b)u$ and  $c|u|^{p-2}$ are $\fol$-invariant. Thus, Lemma \ref{L: Aux PDE's} yields that the functions $Lu$ and $Gu$ are $\fol$-invariant, and so we conclude that for $u\in H_g^1(M)^\fol$, the function $\nabla J(u)$ is $\fol$-invariant  as we claimed.

Recall that the restricted Nehari manifold was defined to be 
\[
\mathcal{N}^\fol_g=\{u\in H_g^1(M)^\fol \mid u\neq0,\ \Vert u\Vert_b^2 = \vert u\vert_{c,p}^p \}.
\]
Observe that if $u\in H_g^1(M)^\fol$, then also $u^{\pm}\in H_g^1(M)^\fol$, where $u^{+}:=\max\{0,u\},$ $u^{-}:=\min\{0,u\}$. The nontrivial $\fol$-invariant sign-changing critical points of $J$, and thus, sign changing solutions to \eqref{Main:Yamabe type} in virtue of Theorem \ref{Theorem:Symmetric Criticality}, must belong to the set 
\begin{linenomath}
\[
    \mathcal{E}_g^{\fol}:=\{u\in\mathcal{N}_g^{\fol}\mid u^{+},u^{-}%
\in\mathcal{N}_g^{\fol}\}.
\]
\end{linenomath}
This set is nonempty. Indeed, Theorem \ref{Lemma:Infinite dimensional} gives the existence of at least two foliated smooth functions $u_1, u_2\geq 0$ with disjoint supports. Then recalling the definition of the projection onto the Nehari manifold given in \eqref{eq: Projection Nehari}, we can define the function $u:=\sigma(u_1)-\sigma(u_2)$, which is an element in $\mathcal{E}_g^\fol$, given that $u^+=\sigma(u_1)\in\mathcal{N}_g^\fol$ and $u^- = -\sigma(u_2)\in \mathcal{N}_g^{\fol}$.

Let $\mathcal{P}^{\fol}:=\{u\in H^1_g(M)^\fol\mid u\geq0\}$ be the convex cone of nonnegative
functions. Then the set of functions in $H^1_g(M)^\fol$ which do not change sign is given as
$\mathcal{P}^{\fol}\cup-\mathcal{P}^{\fol}$.

As the $\langle\cdot,\cdot\rangle_\theta$-gradient of $J$, $-\nabla J\colon H_g^1(M)^\fol\to H_g^1(M)^\fol$,  is of class $C^1$, it is locally Lipschitz and thus for each $u\in H_g^1(M)^\fol$, the Cauchy problem:  
\[
\begin{cases}
\frac{\partial}{\partial t}\psi(t,u) = -\nabla J(\psi(t,u))\\
\psi(0,u)=u,
\end{cases}
\] 
has a unique solution defined for all $0\leq t < T(u)$, where $T(u)\in(0,\infty)$ is the maximal existence time of the solution.  
With this at hand, the \emph{negative gradient flow of $J$}, is simply the map $\psi\colon \mathcal{G}\to H^1_g(M)^\fol$ where 
$\mathcal{G}:=\{(t,u) \mid u\in H^1_g(M)^\fol,$ $0\leq t<T(u)\}$ and $\psi$ is as above. 
A subset $\mathcal{D}$ of $H^1_g(M)^\fol$ is said to be \emph{strictly
positively invariant under $\psi$} if
\[
\psi(t,u)\in\text{int}\mathcal{D}\text{\qquad for every }u\in\mathcal{D}\text{
and }t\in(0,T(u)).
\]

For any set $\mathcal{D} \subset H_g^1(M)^\fol$, we define the set
\[
\mathcal{A}(\mathcal{D}):=\{u\in H_g^1(M)^\fol \mid \psi(t,u)\in\mathcal{D} \text{ for some }t\in(0,T(u))\},
\]
and the entrance time $e_{\mathcal{D}}\colon\mathcal{A}(\mathcal{D})\to\mathbb{R}$, given by
\[
e_{\mathcal{D}}(u):=\inf\{t\geq 0\mid \psi(t,u)\in\mathcal{D}\}.
\]
If $\mathcal{D}$ is strictly positively invariant under $\psi$,
then $\mathcal{A}(\mathcal{D})$ is open in $H_g^1(M)^\fol$ and $e_{\mathcal{D}}(u)$ is continuous.

In what follows, for any subset $\mathcal{D}$ of $H^1_g(M)^\fol$ and any $\varepsilon>0$, $\overline B_{\varepsilon}(\mathcal{D})$ will denote the set
\[
\overline B_{\varepsilon}(\mathcal{D}):=\{u\in H^1_g(M)^\fol\mid\text{dist}_{\theta}%
(u,\mathcal{D}):=\inf_{v\in\mathcal{D}}\left\Vert
u-v\right\Vert_\theta\leq\varepsilon\}.
\]

To find sign-changing critical points of $J$ we use the relative genus between symmetric subsets of $H_{g}^{1}(M)^{\fol}$. But before that we need the following preliminary result that we prove in Subsection \ref{sub-appendix}. 

\begin{lemma}\th\label{lem-Invariance}
\th\label{lem:variational_principle}
There exists $\alpha_{0}>0$ such that for every $\alpha\in(0,\alpha_{0}),$
\begin{enumerate}
\item[(a)] $\overline B_{\alpha}(\mathcal{P}^{\fol}) \cap\mathcal{E}_g^{\fol}=\emptyset$,  
 $\overline B_{\alpha}(-\mathcal{P}%
^{\fol})\cap\mathcal{E}_g^{\fol}=\emptyset$, and
\item[(b)] $\overline B_{\alpha}(\mathcal{P}^{\fol})$ and $\overline B_{\alpha}(-\mathcal{P}%
^{\fol})$ are strictly positively invariant  under the flow of the negative gradient of $J$ with respect to $\langle\cdot,\cdot\rangle_\theta$.
\end{enumerate}
\end{lemma}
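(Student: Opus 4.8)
The plan is to follow the classical Clapp--Pacella strategy, adapted to the foliated setting, splitting the argument into the two parts (a) and (b). For part (a), I would first establish a uniform lower bound on the distance (in the $\Vert\cdot\Vert_\theta$-norm) from any $u\in\mathcal{E}_g^{\fol}$ to the cone $\mathcal{P}^{\fol}$. The key observation is that if $u\in\mathcal{E}_g^{\fol}$ then both $u^{+}$ and $u^{-}$ lie in $\mathcal{N}_g^{\fol}$, so $\Vert u^{-}\Vert_b^2=\vert u^{-}\vert_{c,p}^p$; combining this with the equivalence of $\Vert\cdot\Vert_b$ (resp. $\Vert\cdot\Vert_\theta$) to the standard $H^1_g(M)$-norm and the Sobolev embedding $H^1_g(M)\hookrightarrow L^p_g(M)$, one gets $\Vert u^{-}\Vert_\theta\geq\rho$ for some $\rho>0$ independent of $u$. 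Since $\mathrm{dist}_\theta(u,\mathcal{P}^{\fol})=\Vert u^{-}\Vert_\theta$ (the nearest nonnegative function to $u$ is $u^{+}$, a standard fact about the lattice operations in $H^1$), choosing $\alpha_0<\rho$ forces $\overline B_{\alpha}(\mathcal{P}^{\fol})\cap\mathcal{E}_g^{\fol}=\emptyset$; the argument for $-\mathcal{P}^{\fol}$ is symmetric, using $u^{+}$ in place of $u^{-}$.

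For part (b) I would use the decomposition $\nabla J(u)=u-Lu-Gu$ established just above the lemma, together with \th\ref{L: Aux PDE's}, which guarantees $Lu,Gu\in H^1_g(M)^\fol$ whenever $u\in H^1_g(M)^\fol$, so the flow $\psi$ indeed stays in $H^1_g(M)^\fol$. The heart of the matter is to show that the operators $L$ and $G$ are \emph{positivity-improving} in the appropriate sense: if $u\geq 0$ then $(\theta-b)u$ need not be nonnegative, but by enlarging $\theta$ (which we are free to do: $\theta>\max\{1,\mu,\vert b\vert_\infty\}$) one has $\theta-b>0$, hence $(\theta-b)u\geq 0$ and $c\vert u\vert^{p-2}u\geq 0$, so by the maximum principle for $-\Delta_g+\theta$ the solutions $Lu$ and $Gu$ are strictly positive on $M$ (or at least nonnegative, with the strict inequality coming from the strong maximum principle). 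One then estimates $\mathrm{dist}_\theta\big(\psi(t,u),\mathcal{P}^{\fol}\big)$ along the flow: writing $w(t)=\psi(t,u)$ and $w^{-}(t)$ its negative part, a Gronwall-type differential inequality of the form $\tfrac{d}{dt}\Vert w^{-}(t)\Vert_\theta\leq -\Vert w^{-}(t)\Vert_\theta+o(\Vert w^{-}(t)\Vert_\theta)$ (coming from $\langle \nabla J(w),-w^{-}\rangle_\theta=\Vert w^{-}\Vert_\theta^2-\langle Lw+Gw,-w^{-}\rangle_\theta$ and the fact that $\langle Lw+Gw,-w^{-}\rangle_\theta\leq 0$ since $Lw+Gw\geq0$ and $-w^{-}\geq 0$) shows that once $u\in\overline B_\alpha(\mathcal{P}^{\fol})$, the quantity $\mathrm{dist}_\theta(w(t),\mathcal{P}^{\fol})$ strictly decreases for $t>0$, hence $w(t)\in\mathrm{int}\,\overline B_\alpha(\mathcal{P}^{\fol})$. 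The argument for $-\mathcal{P}^{\fol}$ is again symmetric.

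The main obstacle I anticipate is part (b): making the differential inequality rigorous requires showing that $t\mapsto\Vert w^{-}(t)\Vert_\theta$ is differentiable (or at least that $\Vert w^{-}(t)\Vert_\theta^2$ is, which follows from the chain rule in $H^1$ applied to the $C^1$ flow and the Lipschitz-in-$H^1$ dependence of $u\mapsto u^{-}$), and controlling the cross term $\langle Lw+Gw,w^{-}\rangle_\theta$ precisely — one needs both the sign ($\geq0$) and, to kill the possibility that $w^{-}$ stays constant rather than strictly decreasing, a quantitative gap that comes from the strict positivity of $Lw+Gw$ away from the zero set of $w^{+}$. This is exactly the point where the foliated setting needs no new idea beyond \th\ref{L: Aux PDE's} and the standard maximum principle, so I would expect the proof in Section~\ref{sub-appendix} to mirror \cite[Lemma~2.2]{ClappFernandez2017} (or the analogous lemma in \cite{ClappPacella2008,ClappWeth2005}) essentially verbatim, with the only modifications being the replacement of the group-invariant subspace by $H^1_g(M)^\fol$ and the invocation of \th\ref{L: Aux PDE's} to keep everything foliated.
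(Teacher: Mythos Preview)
Your proposal has two genuine gaps, one in each part.

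\textbf{Part (a).} The claim that $\mathrm{dist}_\theta(u,\mathcal{P}^{\fol})=\Vert u^{-}\Vert_\theta$, i.e.\ that $u^{+}$ is the $\Vert\cdot\Vert_\theta$-closest nonnegative function to $u$, is false in $H^1$. For $u^{+}$ to be the metric projection onto the convex cone $\mathcal{P}^{\fol}$ one would need $\langle u^{-},v\rangle_\theta\leq 0$ for every $v\geq 0$; the zero-order part $\theta\int u^{-}v$ has the right sign, but the gradient part $\int\langle\nabla u^{-},\nabla v\rangle$ has no sign control. Since you need a \emph{lower} bound on $\mathrm{dist}_\theta(u,\mathcal{P}^{\fol})$ and the trivial inequality $\mathrm{dist}_\theta\leq\Vert u^{-}\Vert_\theta$ goes the wrong way, your argument does not close. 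The paper avoids this by passing to $L^p$: Sobolev gives $\mathrm{dist}_\theta(u,\mathcal{P}^{\fol})\geq C^{-1}\min_{v\geq 0}\vert u-v\vert_{c,p}\geq C^{-1}\vert u^{-}\vert_{c,p}$ (a pointwise inequality valid in $L^p$), and then $u^{-}\in\mathcal{N}_g^{\fol}$ together with $\tau_g^{\fol}>0$ bounds $\vert u^{-}\vert_{c,p}$ from below.

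\textbf{Part (b).} Your Gronwall scheme rests on $Lw+Gw\geq 0$ for $w\in\overline B_\alpha(\mathcal{P}^{\fol})$. The maximum principle gives this only for $w\geq 0$; once $w$ has a nontrivial negative part (which is precisely the case of interest on $\partial\overline B_\alpha(\mathcal{P}^{\fol})$), the source terms $(\theta-b)w$ and $c\vert w\vert^{p-2}w$ change sign and $Lw,Gw$ need not be nonnegative. Moreover, even if $Lw+Gw\geq 0$ and $-w^{-}\geq 0$, the sign of $\langle Lw+Gw,-w^{-}\rangle_\theta$ is not determined, again because of the gradient term. The paper takes a different route: it shows directly that $\mathrm{dist}_\theta(Lu+Gu,\mathcal{P}^{\fol})\leq\nu\,\mathrm{dist}_\theta(u,\mathcal{P}^{\fol})$ for some $\nu<1$ and all $u\in\overline B_\alpha(\mathcal{P}^{\fol})$, treating $L$ and $G$ separately. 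For $L$ one uses linearity, $Lv\in\mathcal{P}^{\fol}$ for $v\in\mathcal{P}^{\fol}$, and the contraction $\Vert Lu\Vert_\theta\leq\tfrac{\theta-\mu}{\theta+\mu}\Vert u\Vert_\theta$; for $G$ one computes $\Vert(Gu)^{-}\Vert_\theta^2=\langle Gu,(Gu)^{-}\rangle_\theta=\int c\vert u\vert^{p-2}u\,(Gu)^{-}$ from the defining equation and estimates by H\"older. This yields $Lu+Gu\in B_\alpha(\mathcal{P}^{\fol})$, after which an abstract invariance theorem of Deimling (no differentiation of $t\mapsto\Vert w^{-}(t)\Vert_\theta$ is needed) gives positive invariance, and Mazur's separation theorem upgrades it to \emph{strict} positive invariance.
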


Fix $\alpha$ as in the previous lemma. For $d\in\mathbb{R}$, set
\[
\mathcal{D}_{d}^{\fol}:=\overline B_{\alpha}(\mathcal{P}^{\fol})\cup \overline B_{\alpha
}(\mathcal{-P}^{\fol})\cup J^{d},
\]
where $J^{d}:=\{u\in H^1_g(M)^\fol\mid J(u)\leq d\}$. 

The next result says that, under suitable conditions, $\mathcal{D}_d^\fol$ is a neighborhood retract.

\begin{lemma}\th\label{Lemma:D positively invariant}
If $J$ has no sign-changing critical point $u\in H_g^1(M)^\fol$ with $J(u)=d$, then $\mathcal{D}_d^\fol$ is strictly positively invariant under $\psi$ and the map 
\[
\rho_d\colon\mathcal{A}(\mathcal{D}_d^\fol)\to\mathcal{D}_d^\fol,\quad \rho_d(u):=\psi(e_{\mathcal{D}_d^\fol}(u),u),
\]
is odd and continuous, and satisfies $\rho_d(u)=u$ for every $u\in\mathcal{D}_d^\fol$.
\end{lemma}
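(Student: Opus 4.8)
The plan is to establish the two assertions of the lemma in order: first that $\mathcal{D}_d^\fol$ is strictly positively invariant under the negative gradient flow $\psi$, and then that the induced entrance–time retraction $\rho_d$ has the stated properties, using the general fact recorded above that strict positive invariance of a set makes the corresponding domain $\mathcal{A}(\cdot)$ open and the entrance time continuous.

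For the strict positive invariance I would first record three elementary facts: $\mathcal{D}_d^\fol$ is \emph{closed}, being a finite union of the closed sets $\overline B_\alpha(\mathcal{P}^{\fol})$, $\overline B_\alpha(-\mathcal{P}^{\fol})$ and $J^d=\{u\mid J(u)\le d\}$; it is \emph{symmetric}, since $\overline B_\alpha(-\mathcal{P}^{\fol})=-\overline B_\alpha(\mathcal{P}^{\fol})$ and $J$ is even so $J^d=-J^d$; and $t\mapsto J(\psi(t,u))$ is \emph{non-increasing}, because $\tfrac{d}{dt}J(\psi(t,u))=-\Vert\nabla J(\psi(t,u))\Vert_\theta^2\le 0$. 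Given $u\in\mathcal{D}_d^\fol$ and $t\in(0,T(u))$ I would then split into cases. If $u\in\overline B_\alpha(\mathcal{P}^{\fol})$ or $u\in\overline B_\alpha(-\mathcal{P}^{\fol})$, then \th\ref{lem-Invariance}(b) gives $\psi(t,u)\in\mathrm{int}\,\overline B_\alpha(\pm\mathcal{P}^{\fol})\subset\mathrm{int}\,\mathcal{D}_d^\fol$. Otherwise $u\in J^d$ and $u\notin\mathcal{P}^{\fol}\cup(-\mathcal{P}^{\fol})$, so $u$ is sign-changing. If $J(u)<d$, monotonicity gives $J(\psi(t,u))<d$, so $\psi(t,u)$ lies in the open set $\{J<d\}\subset\mathrm{int}\,\mathcal{D}_d^\fol$. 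If $J(u)=d$, then by hypothesis $u$ is not a critical point, so $\nabla J(u)\ne 0$; hence $\tfrac{d}{dt}J(\psi(t,u))\big|_{t=0^+}=-\Vert\nabla J(u)\Vert_\theta^2<0$, so $J(\psi(t_0,u))<d$ for some small $t_0>0$, and $J(\psi(\cdot,u))$ cannot equal $d$ on $(0,t_0)$: otherwise $J\circ\psi$ would be constantly $d$ on an initial interval, forcing $\nabla J\circ\psi\equiv 0$ there and $\psi(\cdot,u)$ constant, contradicting $\nabla J(u)\ne 0$. By monotonicity $J(\psi(t,u))<d$ for every $t\in(0,T(u))$, so again $\psi(t,u)\in\mathrm{int}\,\mathcal{D}_d^\fol$. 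This covers all cases.

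For $\rho_d$, strict positive invariance and the quoted general fact give that $\mathcal{A}(\mathcal{D}_d^\fol)$ is open and $e:=e_{\mathcal{D}_d^\fol}$ is continuous, so $\rho_d=\psi(e(\cdot),\cdot)$ is continuous as a composition; since $\mathcal{D}_d^\fol$ is closed, the infimum defining $e(u)$ is attained and $\rho_d(u)\in\mathcal{D}_d^\fol$. For $u\in\mathcal{D}_d^\fol$ one has $\psi(0,u)=u\in\mathcal{D}_d^\fol$, hence $e(u)=0$ and $\rho_d(u)=u$ (note $u\in\mathcal{A}(\mathcal{D}_d^\fol)$ because, by strict positive invariance, $\psi(t,u)\in\mathrm{int}\,\mathcal{D}_d^\fol$ for small $t>0$). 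For oddness, $J$ even yields $J'(-u)=-J'(u)$, hence $\nabla J(-u)=-\nabla J(u)$ for the $\langle\cdot,\cdot\rangle_\theta$-gradient; then $t\mapsto-\psi(t,u)$ solves the Cauchy problem with datum $-u$, so by uniqueness $\psi(t,-u)=-\psi(t,u)$ and $T(-u)=T(u)$. Combined with the symmetry of $\mathcal{D}_d^\fol$ this gives $\mathcal{A}(\mathcal{D}_d^\fol)=-\mathcal{A}(\mathcal{D}_d^\fol)$ and $e(-u)=e(u)$, whence $\rho_d(-u)=\psi(e(u),-u)=-\psi(e(u),u)=-\rho_d(u)$.

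The delicate point, and the one I expect to be the crux, is the subcase $J(u)=d$ with $u$ sign-changing in the strict-invariance argument: one must rule out that the flow lingers at the critical level $d$, which is exactly where the hypothesis ``no sign-changing critical point with $J=d$'' enters (through $\nabla J(u)\ne 0$), together with uniqueness and monotonicity of the gradient flow. The remaining parts are routine manipulations with \th\ref{lem-Invariance} and the general properties of entrance times already recorded in the text.
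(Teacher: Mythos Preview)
Your proof is correct and follows essentially the same approach as the paper: reduce strict positive invariance to the case $u\in J^d$ via \th\ref{lem-Invariance}(b), then split according to whether $J(u)<d$ or $J(u)=d$, using the hypothesis in the latter case to ensure $\nabla J(u)\neq 0$ and hence strict descent of $t\mapsto J(\psi(t,u))$. Your case split is marginally cleaner (excluding the $\overline B_\alpha(\pm\mathcal{P}^{\fol})$ pieces first so that $u$ is automatically sign-changing, thereby avoiding the paper's separate sub-case of a non-sign-changing critical point at level $d$), and you spell out the oddness, continuity, and retraction properties of $\rho_d$ in detail where the paper simply asserts them.
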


\begin{proof}
Note that by definition $\rho_d$ is odd. 
To show that $\mathcal{D}_d^\fol$ is strictly positively invariant under $\psi$, by  \th\ref{lem-Invariance}, it suffices to consider $u\in J^d$.  By definition of the flow, given any $u\in J^d$, we have that
\begin{equation}\label{Eq:Decreasing Flow}
\begin{split}
\frac{d}{dt}J\circ\psi(t,u) 
&= J'(\psi(t,u))\frac{\partial}{\partial t}\psi(t,u)\\
&= J'(\psi(t,u))(-\nabla J(\psi(t,u)))\\
& = -\Vert \nabla J(\psi(t,u))\Vert_{\theta}^2\leq 0,\quad t\in[0,T(u)).
\end{split}
\end{equation}
Thus we conclude that $\psi(t,u)\in J^d$ for every $t\in[0,T(u))$. So, if $J(u)<d$, it follows that $\psi(t,u)\in \text{int}J^d$ for every $t\in[0,T(u))$. Next, suppose that $J(u)=d$. By hypothesis, $u$ cannot be a sign changing critical point for $J$ in $H_g^1(M)^\fol$, hence we have two possible cases: either $u$ is a critical point that does not change sign, or $J'(u)\neq 0$. 

The first case reduces to the first paragraph of the proof, that is we have $u\in\overline{B}_\alpha(\mathcal{P}^\fol)\cup\overline{B}_\alpha(-\mathcal{P}^\fol)$, and hence there is nothing to prove.  
Now, suppose that the second case holds true. Then, there exist real numbers $\beta,\varepsilon>0$ such that $\Vert \nabla J(v)\Vert_\theta>\beta$ for every $v\in [d-\varepsilon,d+\varepsilon]$. Thus, $\Vert \nabla J(\psi(t,u))\Vert_\theta\geq\beta>0$ for $t$ small enough, and \eqref{Eq:Decreasing Flow} yields that $\psi(t,u)\in\text{int}J^d$. Thus, $\mathcal{D}_d^\fol$ is strictly positively invariant under $\psi$.

It is now easy to check that $\rho_d$ has the desired properties.
\end{proof}

\begin{remark}\th\label{Rem:D positively invariant}
Observe that $\mathcal{D}_0^\fol$ is always strictly positively invariant under the flow $\psi$, for there are no nontrivial critical points of $J$ satisfying $J(u)=0$. In fact, if $u$ is a critical point of $J$ satisfying  $J(u)=0$, then $0=J'(u)u=\Vert u\Vert_b -\vert u\vert_{c,p}$; this implies that $J(u)=\frac{p-2}{2p}\Vert u\Vert_b=0$ and $u=0$. 
\end{remark}

\begin{remark}\th\label{Rem:SignChanging complement D}
Notice that a critical point of $J$ changes sign if and only if it lies in the complement of $\mathcal{D}_{0}^{\fol}$ in $H^1_g(M)^\fol$. Indeed, if $u$ lies in $H^1_g(M)^\fol\smallsetminus\mathcal{D}_0^\fol$, then $u\notin\mathcal{P}^\fol\cup(-\mathcal{P}^\fol)$ and it must change sign. Conversely, if $u$ is a sign changing critical point of $J$, as we pointed before, it belongs to $\mathcal{E}_g^\fol$ and \th\ref{lem-Invariance} says that this set has empty intersection with $\overline{B}_\alpha(\mathcal{P}^\fol)\cup\overline{B}_\alpha(-\mathcal{P}^\fol)$; moreover, as it is a nontrivial critical point, we have that $J(u)=\frac{p-2}{2p}\Vert u\Vert_b>0$ and $u\notin J^0$. Thus, $u\notin \mathcal{D}_0^\fol$.
\end{remark} 

A subset $\mathcal{Y}$ of $H_{g}%
^{1}(M)^{\fol}$ will be called symmetric if $-u\in\mathcal{Y}$ for every $u\in\mathcal{Y}$. 

\begin{definition}
Let $\mathcal{D}$ and $\mathcal{Y}$ be symmetric subsets of $H_{g}%
^{1}(M)^{\fol}$. The genus of $\mathcal{Y}$ relative to $\mathcal{D}$,
denoted by $\mathfrak{g}(\mathcal{Y},\mathcal{D})$, is the smallest number $n$
such that $\mathcal{Y}$ can be covered by $n+1$ open symmetric subsets
$\;\mathcal{U}_{0},\mathcal{U}_{1},\ldots,\mathcal{U}_{n}$ of $H_{g}%
^{1}(M)^{\fol}$ with the following two properties:

\begin{itemize}
\item[(i)] $\mathcal{Y}\cap\mathcal{D}\subset\mathcal{U}_{0}$ and there exists
an odd continuous map $\vartheta_{0}\colon\mathcal{U}_{0}\to\mathcal{D}$
such that $\vartheta_{0}(u)=u$ for $u\in\mathcal{Y}\cap\mathcal{D}$.

\item[(ii)] There exist odd continuous maps $\vartheta_{j}\colon \mathcal{U}%
_{j}\to \{1,-1\}$ for every $j=1,\ldots,n$.
\end{itemize}
If no such cover exists, we define $\mathfrak{g}(\mathcal{Y},\mathcal{D}%
):=\infty$.
\end{definition}

As in \cite[Section 3]{ClappPacella2008} in order to obtain a variational principle for sign changing solutions, we need a refined version of the $(PS)^\fol_\tau$ condition. Given $\mathcal{D}\subseteq H_g^1(M)^\fol$, we say that $J$ satisfies the \emph{$(PS)_\tau^\fol$ condition relative to $\mathcal{D}$ in $H_g^1(M)^\fol$}, if every sequence $(u_n)_{n\in\N}$ in $H_g^1(M)^\fol$ such that
\[
u_n\notin\mathcal{D},\quad J(u_n)\to \tau,\quad J'(u_n)\to 0 \text{ in }(H_g^1(M)^\fol)^\ast
\]
has a strongly convergent subsequence  in $H^1_g(M)$. When $\mathcal{D}=\emptyset$, we recover the $(PS)_\tau^\fol$ condition given in Section \ref{Sec:Main Results}. Also notice that if $J$ satisfies the $(PS)_\tau^\fol$ condition in $H_g^1(M)^\fol$, then it satisfies the condition relative to any subset of $H_g^1(M)^\fol.$

\begin{lemma}\th\label{Lemma:Var pri crit points}
Fix $\alpha\in(0,\alpha_{0})$ with $\alpha_0$ given as in Lemma \ref{lem-Invariance}. For $j \in \mathbb N$, define
\[
\tau_{j}:=\inf\{\tau\in\mathbb{R}\mid \mathfrak{g}(\mathcal{D}_{\tau}^{\fol}%
,\mathcal{D}_{0}^{\fol})\geq j\}.
\]
Assume that $J$ satisfies $(PS)_{\tau_{j}}^{\fol}$ relative to $\mathcal{D}_0^\fol$ in $H^1_g(M)^\fol$. Then, the following statements hold true:
\begin{itemize}
\item[(a)] $J$ has a sign-changing critical point $u\in H_{g}%
^{1}(M)^{\fol}$ with $J(u)=\tau_{j}$.
\item[(b)] If $\tau_{j}=\tau_{j+1}$, then $J$ has infinitely many sign-changing
critical points $u\in H^1_g(M)^\fol$ with $J(u)=\tau_{j}$.
\end{itemize}
Consequently, for $d\in\mathbb{R}$, if $J$ satisfies $(PS)_{\tau}^{\fol}$ relative to $\mathcal{D}_0^\fol$ in $H^1_g(M)^\fol$ for
every $\tau\leq d$, then $J$ has at least $\mathfrak{g}(\mathcal{D}%
_{d}^{\fol},\mathcal{D}_{0}^{\fol})$ distinct pairs of sign-changing critical
points: $\pm u_1,...,\pm u_k$, $k \geq \mathfrak{g}(\mathcal{D}%
_{d}^{\fol},\mathcal{D}_{0}^{\fol})$, in $H^1_g(M)^\fol$ with $J(\pm u_i)\leq d$ for all $i=1,...,k$. 
\end{lemma}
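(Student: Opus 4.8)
The argument follows the minimax scheme of \cite[Section~3]{ClappPacella2008} (see also \cite[Theorem~2.3]{ClappFernandez2017}), carried out inside the closed subspace $H_g^1(M)^\fol$ with the negative gradient flow $\psi$ of the restriction of $J$; this is permitted by Lemma~\ref{lem-Invariance} and Lemma~\ref{Lemma:D positively invariant}. The plan is first to recall the standard properties of the relative genus $\mathfrak g(\cdot,\cdot)$: it is monotone under inclusions, one has $\mathfrak g(\mathcal Y_1,\mathcal D)\le\mathfrak g(\mathcal Y_2,\mathcal D)$ whenever there is an odd continuous map $\mathcal Y_1\to\mathcal Y_2$ that is the identity on $\mathcal Y_1\cap\mathcal D$, it is subadditive in the form $\mathfrak g(\mathcal Y_1\cup\mathcal Y_2,\mathcal D)\le\mathfrak g(\mathcal Y_1,\mathcal D)+\gamma(\mathcal Y_2)$ (with $\gamma$ the usual Krasnoselskii genus), and a compact symmetric $K$ disjoint from $\mathcal D$ has a closed symmetric neighbourhood $N$, still disjoint from $\mathcal D$, with $\gamma(N)=\gamma(K)$ --- in particular $\gamma(N)=1$ when $K$ is finite and nonempty. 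I would then record the elementary facts about the levels $\tau_j$: since $\mathcal D_\tau^\fol\subseteq\mathcal D_{\tau'}^\fol$ for $\tau\le\tau'$, the function $\tau\mapsto\mathfrak g(\mathcal D_\tau^\fol,\mathcal D_0^\fol)$ is nondecreasing, so $\tau_1\le\tau_2\le\cdots$; for $\tau\le0$ one has $\mathcal D_\tau^\fol\subseteq\mathcal D_0^\fol$ and $\mathcal D_0^\fol$ is strictly positively invariant (Remark~\ref{Rem:D positively invariant}), so the retraction $\rho_0\colon\mathcal A(\mathcal D_0^\fol)\to\mathcal D_0^\fol$ from Lemma~\ref{Lemma:D positively invariant} shows $\mathfrak g(\mathcal D_\tau^\fol,\mathcal D_0^\fol)=0$ there, whence $\tau_j\ge0$; and $\tau_j<\infty$ for the indices that matter because, by Theorem~\ref{Lemma:Infinite dimensional}, a $k$-dimensional subspace spanned by functions with pairwise disjoint supports, pushed into $\mathcal E_g^\fol$ via the Nehari projection \eqref{eq: Projection Nehari}, is a compact symmetric set of relative genus $\ge k$ on which $J$ is bounded, so $\mathfrak g(\mathcal D_\tau^\fol,\mathcal D_0^\fol)\to\infty$ as $\tau\to\infty$. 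Finally, by Remark~\ref{Rem:SignChanging complement D} the sign-changing critical points of $J|_{H_g^1(M)^\fol}$ are exactly the critical points lying in $H_g^1(M)^\fol\smallsetminus\mathcal D_0^\fol$, and since $J(u)=J(u^+)+J(u^-)$ with $u^\pm\in\mathcal N_g^\fol$ they all satisfy $J(u)\ge2\tau_g^\fol>0$.

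For (a), suppose for contradiction that $J$ has no sign-changing critical point at level $\tau_j$. Then, by the previous paragraph, $J$ has no critical point in $H_g^1(M)^\fol\smallsetminus\mathcal D_0^\fol$ at level $\tau_j$, and --- combining the $(PS)^\fol_{\tau_j}$ condition relative to $\mathcal D_0^\fol$ with the fact that $\tau_j>0$ (so the points involved automatically lie outside $J^0$) --- there are $\varepsilon_0,\beta>0$ with $\Vert\nabla J(u)\Vert_\theta\ge\beta$ whenever $J(u)\in[\tau_j-\varepsilon_0,\tau_j+\varepsilon_0]$ and $u\notin\overline B_\alpha(\mathcal P^\fol)\cup\overline B_\alpha(-\mathcal P^\fol)$. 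A second use of $(PS)^\fol_{\tau_j}$ together with Lemma~\ref{lem-Invariance}(a) --- a limit of functions of $\mathcal E_g^\fol$ stays at $\langle\cdot,\cdot\rangle_\theta$-distance $\ge\alpha$ from $\pm\mathcal P^\fol$, hence changes sign, so a critical point obtained as such a limit at level $\tau_j$ would be sign-changing --- shows, after shrinking $\varepsilon_0$, that there is no sign-changing critical value in $(\tau_j-\varepsilon_0,\tau_j]$. Fix $0<\varepsilon<\varepsilon_0$. Since $\tau_j-\varepsilon$ is not a sign-changing critical value, Lemma~\ref{Lemma:D positively invariant} gives that $\mathcal D_{\tau_j-\varepsilon}^\fol$ is strictly positively invariant with odd continuous retraction $\rho_{\tau_j-\varepsilon}\colon\mathcal A(\mathcal D_{\tau_j-\varepsilon}^\fol)\to\mathcal D_{\tau_j-\varepsilon}^\fol$. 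Along a flow line $t\mapsto\psi(t,u)$ with $u\in\mathcal D_{\tau_j+\varepsilon}^\fol$ the energy is nonincreasing; while it stays in $[\tau_j-\varepsilon,\tau_j+\varepsilon]$ and outside the cones $\overline B_\alpha(\pm\mathcal P^\fol)$ it decreases at rate $\ge\beta^2$, finiteness of $\int\Vert\nabla J(\psi(t,u))\Vert_\theta^2\,dt$ rules out finite-time blow-up before the line leaves that region, and by Lemma~\ref{lem-Invariance}(b) a flow line that enters one of the cones stays there; hence every such $u$ reaches $\mathcal D_{\tau_j-\varepsilon}^\fol$ in time $\le2\varepsilon/\beta^2$, so $\mathcal D_{\tau_j+\varepsilon}^\fol\subseteq\mathcal A(\mathcal D_{\tau_j-\varepsilon}^\fol)$ and $\rho_{\tau_j-\varepsilon}$ restricts to an odd continuous map $\mathcal D_{\tau_j+\varepsilon}^\fol\to\mathcal D_{\tau_j-\varepsilon}^\fol$ fixing $\mathcal D_0^\fol$. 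Monotonicity of the relative genus now yields $j\le\mathfrak g(\mathcal D_{\tau_j+\varepsilon}^\fol,\mathcal D_0^\fol)\le\mathfrak g(\mathcal D_{\tau_j-\varepsilon}^\fol,\mathcal D_0^\fol)\le j-1$, a contradiction.

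For (b), assume $\tau_j=\tau_{j+1}=:c$ but, for contradiction, that $J$ has only finitely many pairs of sign-changing critical points at level $c$; by (a) there is at least one, so their union $K$ is a nonempty finite symmetric subset of $H_g^1(M)^\fol\smallsetminus\mathcal D_0^\fol$. Arguing as in (a) one gets $\varepsilon,\beta>0$ with $\Vert\nabla J\Vert_\theta\ge\beta$ on $\{J\in[c-\varepsilon,c+\varepsilon]\}\smallsetminus(N\cup\overline B_\alpha(\pm\mathcal P^\fol))$ for any small neighbourhood $N$ of $K$, and no sign-changing critical value in $(c-\varepsilon,c]$; set $T^\ast:=2\varepsilon/\beta^2$ and choose $\delta>0$ so small that $N:=\overline B_{2\delta}(K)$ satisfies $N\cap\mathcal D_0^\fol=\emptyset$, $\gamma(N)=1$, and $\Vert\nabla J\Vert_\theta\le\delta/T^\ast$ on $N$, so that a flow line meeting $\overline B_\delta(K)$ cannot escape $N$ within time $T^\ast$. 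Running $\psi$ for time $T^\ast$, frozen at the moment the line enters $\mathcal D_{c-\varepsilon}^\fol$, then produces an odd continuous map $\eta\colon\mathcal D_{c+\varepsilon}^\fol\to\mathcal D_{c-\varepsilon}^\fol\cup N$ equal to the identity on $\mathcal D_0^\fol$, since a line starting in $\mathcal D_{c+\varepsilon}^\fol$ that has not reached $\mathcal D_{c-\varepsilon}^\fol$ by time $T^\ast$ must, by the gradient bound, have met $\overline B_\delta(K)$ and hence end in $N$. Subadditivity and monotonicity of $\mathfrak g$ give
\[
j+1\le\mathfrak g(\mathcal D_{c+\varepsilon}^\fol,\mathcal D_0^\fol)\le\mathfrak g(\mathcal D_{c-\varepsilon}^\fol,\mathcal D_0^\fol)+\gamma(N)\le(j-1)+1=j,
\]
a contradiction, so level $c$ carries infinitely many sign-changing critical points. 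For the last assertion, set $k:=\mathfrak g(\mathcal D_d^\fol,\mathcal D_0^\fol)$; monotonicity of $\mathfrak g$ in $\tau$ gives $0\le\tau_1\le\cdots\le\tau_k\le d$, the hypothesis gives $(PS)^\fol_{\tau_j}$ relative to $\mathcal D_0^\fol$ for each $j\le k$, and (a)--(b) produce at least $k$ distinct pairs $\pm u_1,\dots,\pm u_k$ of sign-changing critical points of $J|_{H_g^1(M)^\fol}$ with $J(\pm u_i)\le d$ (immediately from (a) if the $\tau_j$ are pairwise distinct, and from (b) otherwise).

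The step I expect to be the main obstacle is precisely this deformation: producing, from the relative $(PS)$ condition alone, the odd continuous map from $\mathcal D_{c+\varepsilon}^\fol$ into $\mathcal D_{c-\varepsilon}^\fol$ (or $\mathcal D_{c-\varepsilon}^\fol\cup N$) that is the identity on $\mathcal D_0^\fol$. It rests on the uniform lower bound for $\Vert\nabla J\Vert_\theta$ away from the critical set, on the exclusion of finite-time blow-up of the flow via the $L^2$-bound for $\Vert\nabla J\Vert_\theta$ along flow lines, and --- the point genuinely specific to the present non-group setting --- on the strict positive invariance of the cones $\overline B_\alpha(\pm\mathcal P^\fol)$ from Lemma~\ref{lem-Invariance}, which keeps the flow from escaping toward a critical point that does not change sign. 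A subsidiary subtlety, also handled via Lemma~\ref{lem-Invariance}(a) and $(PS)^\fol_{\tau_j}$, is that sign-changing critical values cannot accumulate at $\tau_j$ from below, which is exactly what allows Lemma~\ref{Lemma:D positively invariant} to be invoked at the level $\tau_j-\varepsilon$.
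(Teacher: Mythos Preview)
Your proposal is correct and follows the same minimax/relative-genus approach as the paper, which in fact details only (a) and defers (b) entirely to \cite[Proposition~3.6]{ClappPacella2008}; your account is, if anything, more explicit than the paper's.

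One small point to tighten in your treatment of (b): the parameter choices are circular as written. You first obtain $\varepsilon,\beta$ (and hence $T^\ast$) using ``any small neighbourhood $N$ of $K$'', and only afterwards set $N:=\overline B_{2\delta}(K)$ with $\delta$ chosen in terms of $T^\ast$; in addition, the trapping requirement $\sup_{N}\|\nabla J\|_\theta\le\delta/T^\ast$ asks that $\sup_{\overline B_{2\delta}(K)}\|\nabla J\|_\theta$ decay at least linearly in $\delta$, which does not follow from continuity of $\nabla J$ alone. The standard fix (and the one in \cite{ClappPacella2008}) is to choose $N$ \emph{first}---any open symmetric neighbourhood of $K$ with $\gamma(N)=\gamma(K)=1$ and $\overline N\cap\mathcal D_0^\fol=\emptyset$---then obtain $\beta,\varepsilon$ for that fixed $N$, and finally use the energy identity $\int_0^T\|\nabla J(\psi(t,u))\|_\theta^2\,dt\le 2\varepsilon$ to bound the \emph{total time} a flow line in the band spends outside $N\cup\overline B_\alpha(\pm\mathcal P^\fol)$, rather than attempting to trap it inside a shrinking ball. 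This yields the odd continuous map $\mathcal D_{c+\varepsilon}^\fol\to\mathcal D_{c-\varepsilon}^\fol\cup\overline N$ fixing $\mathcal D_0^\fol$ without the circularity.
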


\begin{proof}
The proof is exactly the same as that of Proposition~3.6 in \cite{ClappPacella2008}, but we include it for the sake of completeness. 

We prove part $(a)$ by contradiction,  using the fact that $\mathcal{D}_{0}^{\fol}$ is strictly positively invariant under the flow $\psi$ (see \th\ref{Rem:D positively invariant}). More precisely, we assume that there does not exist a sign-changing critical point $u\in H^1_g(M)^\fol$ with $J(u)=\tau_j$. We claim that the $(PS)_{\tau_j}^\fol$ condition relative to $\mathcal{D}_0^\fol$ implies the existence of $\beta>0$ and $\varepsilon\in (0,\tau_j)$ such that 
\begin{equation}\label{Eq:Nonstopping flow}
\Vert \nabla J(u)\Vert_{\theta}\geq \beta>0 ,\quad\text{for every } u\in J^{-1}[ \tau_{j}-\varepsilon ,\tau_{j}+\varepsilon] \setminus\mathcal{D}_0^\fol.
\end{equation}
To see this, suppose in order to get a contradiction, the existence of a sequence $(u_n)_{n\in\N}$ in $H_g^1(M)^\fol$ such that $J(u_n)\to \tau_j$ and $\Vert \nabla J(u_n)\Vert_\theta\to 0$. Then $(u_n)_{n\in\N}$ is a $(PS)_{\tau_j}$ sequence, and since $J$ satisfies the $(PS)_{\tau_j}$ condition relative to $\mathcal{D}_0^\fol$ there exists $u\in H_g^1(M)^\fol$ such that, up to a subsequence which we also denote by $u_n$, it holds that $u_n\to u$ strongly in $H_g^1(M)$. As $J$ and $J'$ are continuous, it follows that $J(u)=\tau_j$ and $J'(u)=0$. By hypothesis, $u$ cannot change sign. Hence $u$ is a critical point of $J$ lying in $\mathcal{P}^\fol\cup(-\mathcal{P}^\fol )\subset \text{int}\mathcal{D}_0^\fol$, which is open. Therefore, there exists $n_0$ such that $u_n\in \mathcal{D}_0^\fol$ for every $n\geq n_0$, which is a contradiction and the claim follows. 

As we pointed out in  \th\ref{Rem:SignChanging complement D}, every sign changing critical point lies in $H_g^1(M)^\fol\smallsetminus \mathcal{D}_{0}^\fol$, and \eqref{Eq:Nonstopping flow} implies that there are no sign changing critical points of $J$ in $J^{-1}[\tau_j-\varepsilon,\tau_j+\varepsilon]$. Therefore $\mathcal{D}_d^\fol$ is strictly positively invariant under $\psi$ for every $d\in[\tau_j-\varepsilon,\tau_j+\varepsilon]$ by \th\ref{Lemma:D positively invariant}. As $\Vert \nabla J\Vert_\theta>0$, identity \eqref{Eq:Nonstopping flow} yields that $\mathcal{D}_{\tau_j+\varepsilon}^\fol$ flows under $\psi$ to $\mathcal{D}_{\tau_j-\varepsilon}^\fol$ and $\mathcal{D}_{\tau_j+\varepsilon}^\fol\subset\mathcal{A}(\mathcal{D}_{\tau_j-\varepsilon}^\fol)$. In this way, \th\ref{Lemma:D positively invariant} implies that $\rho_{\tau_j-\varepsilon}\colon \mathcal{D}_{\tau_i+\varepsilon}^\fol\to\mathcal{D}_{\tau_i-\varepsilon}^\fol$ is odd, continuous and $\rho_{\tau_j-\varepsilon}(u)=u$ for every $u\in\mathcal{D}_0^\fol$, given that $\mathcal{D}_0^\fol\subset\mathcal{D}_{\tau_i-\varepsilon}^\fol$. As $\mathcal{D}_0^\fol$ is a symmetric neighborhood retract by \th\ref{Rem:D positively invariant} and \th\ref{Lemma:D positively invariant}, from the monotonicity of the genus $\mathfrak{g}$ \cite[Lemma 3.4]{ClappPacella2008} it follows that $j \leq \mathfrak{g}(\mathcal{D}_{\tau_j+ \varepsilon}^{\fol},\mathcal{D}_{0}^{\fol}) \leq \mathfrak{g}(\mathcal{D}_{\tau_j - \varepsilon}^{\fol},\mathcal{D}_{0}^{\fol})<j$ which is a contradiction. 

The proof of part $(b)$ is exactly the same as the proof of Proposition 3.6 (b) in \cite{ClappPacella2008}. The argument uses again the fact that $\mathcal{D}_{0}^{\fol}$ is strictly positively invariant under the flow $\psi$, the contradicting hypothesis, and, the monotonicity and subadditive properties of the genus  (see \cite[Lemma 3.4]{ClappPacella2008}) to get $j+1 \leq \mathfrak{g}(\mathcal{D}_{\tau_j+ \varepsilon}^{\fol},\mathcal{D}_{0}^{\fol}) \leq \mathfrak{g}(\mathcal{D}_{\tau_j - \varepsilon}^{\fol},\mathcal{D}_{0}^{\fol}) +1 <j+1$. 
\end{proof}

We now prove \th\ref{Theorem:Variational Principle}. We start by showing the existence of a positive solution of minimal energy using the ideas developed in Section~\ref{Sec:Compactness}. Then we show the existence of the sign-changing solutions following the proof of Theorem 3.7 in \cite{ClappPacella2008}.

\begin{proof}
[Proof of Theorem \ref{Theorem:Variational Principle}]
We first show the existence of a positive critical point attaining $\tau_g^\fol = \inf\{J(u)\mid u\in \mathcal{N}_g^\fol\}$. Let $u_n\in\mathcal{N}_g^\fol$ such that $J(u_n)\rightarrow\tau_g^\fol$. Since $u_n\in\mathcal{N}_g^\fol$ 
we get
\[
J(u_n)=\frac{1}{2} \Vert u_n\Vert_b^2 - \frac{1}{p}\vert u_n\vert_{c,p}^p
=\frac{p-2}{2p}\vert u_n\vert_{c,p}^p.
\]
Thus, $\vert u_n\vert_{c,p}^p$ is bounded in $H_g^1(M)^\fol$. 
As $2<p\leq 2_m^\ast$ and $\kappa_\fol\geq 1$, as in the proof of Theorem \ref{Theorem:Compactness}, there exists $u\in H_g^1(M)^\fol$ such that $u_n$ converges weakly to $u$ in $H_g^1(M)^\fol$ and strongly in $L_g^p(M)$. Then 
\[
\vert u\vert_{c,p}^p =\lim_{n\rightarrow\infty}\vert u_n\vert_{c,p}^p=\frac{2p}{p-2}\tau_g^\fol>0,
\]
implying that $u\neq 0$. Therefore, there exists $t_u>0$ such that $\sigma(u)=t_u u\in\mathcal{N}_g^\fol$, where $\sigma$ is the projection onto $\mathcal{N}_g$.
 As $u_n\in \mathcal{N}_g^\fol$, identity \eqref{Eq:Nehari maximum} yields that $J(t_u u_n)\leq J(u_n)$. Therefore, using basic properties of weak convergence and that $u_n\rightarrow u$ in $L_g^p(M)$, we obtain that
\[
\begin{split}
\tau_g^\fol&\leq J(t_u u) = \frac{1}{2}\Vert t_u u\Vert_b^2 - \frac{1}{p}\vert t_u u\vert_{c,p}^p\\
&\leq \liminf_{n\rightarrow\infty}\left( \frac{1}{2}\Vert t_u u_n\Vert_b^2 \right) - \liminf_{n\rightarrow\infty}\left( \frac{1}{p}\vert t_u u_n\vert_{c,p}^p \right)\\
&=\liminf_{n\rightarrow\infty}J(t_uu_n)\leq \lim_{n\rightarrow\infty}J(u_n)=\tau_g^\fol.
\end{split}
\]
Again, since $u_n\rightarrow u$ strongly in $L^p_g(M)$, we conclude from this inequalities that $\lim_{n\rightarrow\infty}\Vert u_n\Vert^2_b$ exists and is equal to $\Vert u\Vert^2_b$. Hence $u_n\rightarrow u$ strongly in $H_g^1(M)^\fol$, and since $\mathcal{N}_g^\fol$ is closed in $H^1_g(M)^\fol$ then it follows that $u\in\mathcal{N}_g^\fol$, and $J(u)=\tau_g^\fol$. As $\mathcal{N}_g^\fol$ is a natural restriction for the functional $J$ (see \cite[Chapter 4]{Willem1996}), $u$ is a nontrivial $\fol$-invariant critical point for $J$ in $H_g^1(M)^\fol$ attaining $\tau_g^\fol$.
\smallskip

Now we see that $u$ does not change sign. Suppose, in order to get a contradiction, that this is not true. Then, $u^+\neq0$ and $u^-\neq 0$. As $u^{\pm}\in H_g^1(M)$ and as $u$ is a critical point for $J$, we have that
\[\begin{split}
0&=J'(u)u^{\pm}=\int_M [ \langle \nabla_g u, \nabla_g(u^\pm) \rangle_g + b u (u^\pm) ] - \int_M c \vert u\vert^{p-2} u (u^\pm) \\
&= \Vert u^\pm\Vert_b^2 - \vert u^{\pm}\vert_{c,p}^p,
\end{split}\]
concluding that $u^\pm\in\mathcal{N}_g^\fol$. Hence 
\[
\tau_g^\fol = J(u)= J(u^+) + J(u^-)\geq 2 \inf_{v\in\mathcal{N}_g^\fol}J(v)=2\tau_g^\fol,
\]
which is a contradiction since $\tau_g^\fol>0$. Thus $u$ does not change sign. If $u\leq 0$, we can take $-u$, since it is also a critical point for $J$ and it is positive.

\medskip
We proceed to prove the existence of the sign-changing critical points:
\medskip
Let $d:=\sup_{W}J$. By Lemma
\ref{Lemma:Var pri crit points}, we only need to show that 
$$n:=\mathfrak{g}
\left(  \mathcal{D}_{d}^{\fol},\mathcal{D}_{0}^{\fol}\right)  \geq
\dim(W)-1.$$
Let $\mathcal{U}_{0},\mathcal{U}_{1},\ldots,\mathcal{U}_{n}$ be
open symmetric subsets of $H^1_g(M)^\fol$ covering $\mathcal{D}%
_{d}^{\fol}$ with $\mathcal{D}_{0}^{\fol}\subset\mathcal{U}_{0}$ and let
$\vartheta_{0}\colon \mathcal{U}_{0}\to \mathcal{D}_{0}^{\fol}$ and
$\vartheta_{j}\colon \mathcal{U}_{j}\to \{1,-1\}$, $j=1,\ldots,n$, be odd
continuous maps such that $\vartheta_{0}(u)=u$ for all $u\in\mathcal{D}%
_{0}^{\fol}$. Since $H^1_g(M)^\fol$ is an absolute retract, we may assume that
$\vartheta_{0}$ is the restriction of an odd continuous map $\widetilde
{\vartheta}_{0}\colon H^1_g(M)^\fol\to H^1_g(M)^\fol$. Let
$\mathcal{B}$ be the connected component of the complement of the Nehari
manifold $\mathcal{N}_{g}^{\fol}$ in $H^1_g(M)^\fol$ which contains
the origin, and set $\mathcal{O}:=\{u\in W\mid\widetilde{\vartheta}_{0}%
(u)\in\mathcal{B}\}$. Then, $\mathcal{O}$ is a bounded open symmetric
neighborhood of $0$ in $W$.

Let $\mathcal{V}_{j}:=\mathcal{U}_{j}\cap\partial\mathcal{O}$. Then,
$\mathcal{V}_{0},\mathcal{V}_{1},\ldots,\mathcal{V}_{n}$ are symmetric and
open in $\partial\mathcal{O}$, and they cover $\partial\mathcal{O}$. Further,
by Lemma \ref{lem:variational_principle},
\[
\vartheta_{0}(\mathcal{V}_{0})\subset\mathcal{D}_{0}^{\fol}\cap
\mathcal{N}_{g}^{\fol}\subset\mathcal{N}_{g}^{\fol}\smallsetminus
\mathcal{E}_{g}^{\fol}.
\]
The set $\mathcal{N}_{g}^{\fol}\smallsetminus\mathcal{E}_{g}^{\fol}$
consists of two connected components, see for example \cite[Lemmas 2.5 and 2.6]{CastroCossioNeuberger1997}. Therefore there
exists an odd continuous map $\eta\colon\mathcal{N}_{g}^{\fol}\smallsetminus
\mathcal{E}_{g}^{\fol}\to\{1,-1\}$. Let $\eta_{j}\colon\mathcal{V}%
_{j}\to\{1,-1\}$ be the restriction of the map $\eta\circ\vartheta
_{0}$ if $j=0,$ and the restriction of $\vartheta_{j}$ if $j=1,\ldots,n$. Take
a partition of the unity $\{\pi_{j}\colon\partial\mathcal{O}\to
\lbrack0,1]\mid j=0,1,\ldots,n\}$ subordinated to the cover $\{\mathcal{V}%
_{0},\mathcal{V}_{1},\ldots,\mathcal{V}_{n}\}$ consisting of even functions,
and let $\{e_{1},\ldots,e_{n+1}\}$ be the canonical basis of $\mathbb{R}%
^{n+1}$. Then, the map $\Psi\colon\partial\mathcal{O}\to\mathbb{R}^{n+1}$
given by
\[
\Psi(u):=\sum_{j=0}^{n}\eta_{j}(u)\pi_{j}(u)e_{j+1}%
\]
is odd and continuous, and satisfies $\Psi(u)\neq0$ for every $u\in
\partial\mathcal{O}$. The Borsuk-Ulam theorem allow us to conclude that
$\dim(W)\leq n+1,$ as claimed.
\end{proof}

\subsection{Proof of auxiliary results}\label{sub-appendix}

\begin{proof}[Proof of Lemma \ref{L: Aux PDE's}]
Fix $f\in L^2_g(M)$, and define $I_f\colon H^1_g(M)\to \R$ as 
\begin{linenomath}
\[
I_f(u) = \int_M fu dV_g.  
\]
\end{linenomath}
Observe that $I_f$ is linear and bounded because $f\in L^2(M)$ and since for any $u\in H^1_g(M)$ it holds that $u\in L_g^2(M)$.  By the
Frèchet-Riesz representation theorem, there exists a unique $v_f\in H^1_g(M)$ such that the following equality holds for any $u\in H^1_g(M)$,
\begin{linenomath}
\[
    \langle v_f, u\rangle_\theta  = I_f(u). 
\]
\end{linenomath}
This implies that $v_f$ is a weak solution in $M$ to the equation
\begin{linenomath}
\begin{equation*}
    -\Delta_g v + \theta v = f.
\end{equation*}
\end{linenomath}

Now we show that if $f$ is $\fol$-invariant then $v_f$ is $\fol$-invariant. Consider the operator $J_\theta\colon H^1_g(M)\to \R$ given by \begin{linenomath}
\[
J_\theta(u) = \tfrac{1}{2}\Vert u \Vert_\theta - I_f(u).
\]
\end{linenomath}
Observe that for any $u,w\in H^1_g(M)$ we have that
\begin{linenomath}
\[
J'_\theta(w)(u) = \langle w,u\rangle_\theta - I'_f(w)(u).
\]
\end{linenomath}
Note that for any $u,w\in H^1_g(M)$,
\[
    I'_f(w)(u) = I_f(u).
\]
Thus we see that $v_f$ is a critical value of $J_\theta$ by construction.

Recall that $H = H^1_g(M)^\fol$ is a closed linear subspace
of $H^1_g(M)$.  We note that the orthogonal decomposition $H^1_g (M)= H\oplus H^\perp$ with respect to the standard inner product is also an orthogonal decomposition with respect to $\langle\,,\,\rangle_\theta$. Indeed if $u\in H$ and $v\in H^\perp$, so that $\langle u,v\rangle_{H^1_g(M)} = 0$,
then we have 
\[ 
\langle u,v\rangle_{\theta} =  \int_M (\theta -1) uv dV_g.
\]
Since $\theta>1$ we conclude that the previous expression equals zero applying Lemma \ref{lem:H1Ortho}. 

Write $v_f = v_f^\top+v_f^\perp$, with $v_f^\top \in H$ and $v_f^\perp \in H^\perp$.
We claim that $v^\top_f$ is a critical point of $J_\theta$. If the claim holds, then for any $u\in H^1_g(M)$ we have that $I_f(u) = \langle v^\top_f,u\rangle_\theta$, and since we know that $I_f(u)=\langle v_f, u\rangle_\theta$ for any $u\in H^1_g(M)$, we conclude that $v_f=v_f^\top$, that is, $v_f\in H^1_g(M)^\fol$ as desired.

Now we prove the claim. For $u\in H$,
\begin{linenomath}
\begin{align*}
J'_\theta(v^\top_f)(u) = & \langle v_f^\top, u \rangle_\theta- I'_f(v^\top_f)(u) \\
= & \langle v_f, u\rangle_\theta- I'_f(v_f)(u) = J'(v_f)(u)=0.
\end{align*}
\end{linenomath}
This implies that $v^\top_f$ is a critical point of $J_\theta|_{H^{1}_{g}(M)^{\fol}}$. 
 Now consider a fixed $w\in H^\perp$. Then, 
 $$J'_\theta(v^\top_f)(w) = \langle v^\top_f, w\rangle_\theta - I'_f(v^\top_f)(w)=I_f(w)=\int_M f w dV_g.$$
  Since $f\in L^2(M)^\fol$,  \th\ref{L: H1(M) fol perp in L2(M) fol perp}
  implies that $v^\top_f$ is a critical point of $J_\theta|_{H^\perp}$.
 By linearity, it follows that $v^\top_f$  is a critical point of $J_\theta$.
 \end{proof}

We proceed to prove Lemma \ref{lem-Invariance}. The proof, up to minor modifications, is the same as in \cite[Lemma 5.2]{ClappFernandez2017}, but we sketch it for the convenience of the reader.

\begin{proof}[Proof of Lemma \ref{lem-Invariance} part $(a)$]

For every $u\in H^1_g(M)^\fol$, Sobolev's inequality yields a positive constant $C$ such that
\begin{equation}\label{eq:distP} 
\text{dist}_{\theta}(u,\mathcal{P}^{\fol
}) = \min_{v\in\mathcal{P}^{\fol
}}\left\Vert u-v\right\Vert_\theta \geq 
 C^{-1}\min_{v\in\mathcal{P}^{\fol}}\left\vert u-v\right\vert _{c,p}  \geq C^{-1} \vert u^{-}\vert _{c,p},
\end{equation}
where in the last part we used the fact that $\left\vert
u-v\right\vert \geq \left\vert u^{-}\right\vert$ for every $u,v\colon M\to\mathbb{R}$ with $v\geq0$, and $u^-=-\min\{0,u\}$.

Now we assume that $u\in\mathcal{E}_g^{\fol}$. 
We bound $\vert u^{-}\vert _{c,p}$
by noticing that $u^{-}\in\mathcal{N}_g^{\fol}$
implies that, 
\[
J(u^{-})= 
\tfrac{1}{2}\Vert u^{-}\Vert^2_b -\tfrac{1}{p}\vert u^{-}\vert_{c,p}^p 
= \tfrac{p-2}{2p}\vert u^{-}\vert^p_{c,p}.
\]
 Then by definition of $\tau_g^\fol$, see \eqref{def:LeastEnergy}, we get
\[
\vert
u^{-}\vert _{c,p}^p = \tfrac{2p}{p-2}J(u^{-})\geq \tfrac{2p}{p-2}\tau_g^\fol>0.
 \]
Taking $\alpha_1:=C^{-1}\left(\tfrac{2p}{p-2}\tau_g^\fol\right)^{\tfrac{1}{p}}>0$, it follows that $\mathrm{dist}_{\theta}(u,\mathcal{P}^{\fol})\geq
\alpha_{1}$ for all $u\in\mathcal{E}^{\fol}_g$.
Similarly, $\mathrm{dist}_{\theta}(u,-\mathcal{P}^{\fol})\geq
\alpha_{1}$ for all $u\in\mathcal{E}^{\fol}_g$.
Thus, 
$\overline B_{\alpha}(\mathcal{P}^{\fol}) \cap\mathcal{E}_g^{\fol}, \,\overline B_{\alpha}(-\mathcal{P}%
^{\fol})\cap\mathcal{E}_g^{\fol}=\emptyset$, 
for every $\alpha\in(0,\alpha_{1})$.
\end{proof}

To prove part $(b)$ of Lemma \ref{lem-Invariance} we will use the following result.

 \begin{proposition}[Theorem 5.2 in \cite{Deimling}]\th\label{T: Deimling}
 Let $X$ be a real vector space with a norm inducing a distance $\rho$, $\Omega \subset X$ open and $D \subset X$ closed convex with non empty interior
 with $D \cap \Omega \neq \emptyset$ and such that the distance from any point in $X$ to $D$ is achieved by some point. Let $f \colon (0,a) \times \Omega \to X$ be 
 a locally Lipschitz function that satisfies
 \[\lim_{\lambda \to 0}\lambda^{-1}\rho(u+\lambda f(t,u), D)=0  \quad \text{for} \quad u \in \Omega \cap \partial D \quad \text{and} \quad t \in (0,a).
 \]
 Then any continuous $x\colon [0,b) \to \Omega$ such that
 $x(0) \in D$ and $x'(t)=f(t,x(t))$ in $(0,b)$ satisfies $x(t) \in D$
 for all $t\in [0,b)$. 
 \end{proposition}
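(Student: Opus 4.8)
The plan is to view this result as a Nagumo--Bony--Brezis type viability (flow--invariance) statement: the infinitesimal \emph{subtangential} condition on $f$ is to be propagated to global invariance of $D$ along any solution of $x'=f(t,x)$. Everything is reduced to controlling the distance function $g(t):=\rho(x(t),D)$, which is nonnegative, continuous and locally Lipschitz in $t$ (it is the composition of the $1$-Lipschitz map $\rho(\cdot,D)$ with the locally Lipschitz curve $x$), and satisfies $g(0)=0$. I would argue by contradiction. If $x(t_{*})\notin D$ for some $t_{*}$, set $t_{1}:=\inf\{t\in[0,b):x(t)\notin D\}$; since $D$ is closed and $x$ is continuous, the set $\{t:x(t)\notin D\}$ is open, so $x(t_{1})\in D$ and $g(t_{1})=0$. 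Choose a ball $B$ around $x(t_{1})$ with $B\subset\Omega$ on which $f$ is Lipschitz with constant $L$, and $\delta>0$ so small that $x(t)\in B$ for $t\in[t_{1},t_{1}+\delta]$ and, in addition, every nearest point of $x(t)$ in $D$ (which exists by the proximinality hypothesis on $D$) also lies in $B\subset\Omega$ for such $t$. It then suffices to establish, on $[t_{1},t_{1}+\delta)$, the differential inequality $D^{+}g(t)\le L\,g(t)$ for the upper right Dini derivative $D^{+}g$.

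For this estimate, fix $t\in[t_{1},t_{1}+\delta)$ and small $h>0$. Since $x'(t)=f(t,x(t))$ and $s\mapsto f(s,x(s))$ is continuous, $x(t+h)=x(t)+h\,f(t,x(t))+o(h)$, so the triangle inequality for $\rho(\cdot,D)$ gives
\[
g(t+h)\ \le\ \rho\bigl(x(t)+h\,f(t,x(t)),D\bigr)+o(h).
\]
Pick a nearest point $v=v(t)\in D$ with $\|x(t)-v\|=g(t)$. If $g(t)>0$ then necessarily $v\in\partial D$, and by our choice of $\delta$ we have $v\in\Omega$; comparing the point $x(t)+h\,f(t,x(t))$ with $v+h\,f(t,v)$ and using $\|f(t,x(t))-f(t,v)\|\le L\,g(t)$ yields
\[
\rho\bigl(x(t)+h\,f(t,x(t)),D\bigr)\ \le\ g(t)+hL\,g(t)+\rho\bigl(v+h\,f(t,v),D\bigr),
\]
and the subtangential hypothesis at $v\in\Omega\cap\partial D$ makes the last term $o(h)$. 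If instead $g(t)=0$, then $x(t)\in D$ and either $x(t)$ is interior to $D$ (so $g\equiv 0$ near $t$) or $x(t)\in\partial D$ and the subtangential condition applies directly at $v=x(t)$. In all cases $g(t+h)\le g(t)+hL\,g(t)+o(h)$, i.e. $D^{+}g(t)\le L\,g(t)$. Finally, $G(t):=e^{-Lt}g(t)$ is continuous with $D^{+}G\le 0$ everywhere on $[t_{1},t_{1}+\delta)$, hence non-increasing, so $G\le G(t_{1})=0$ and therefore $g\equiv 0$ there. Thus $x(t)\in D$ on $[t_{1},t_{1}+\delta)$, contradicting the definition of $t_{1}$; consequently $x(t)\in D$ for all $t\in[0,b)$.

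The step I expect to require the most care is the \emph{localization} near $t_{1}$: one must simultaneously control the curve $x$, the neighbourhood on which $f$ is Lipschitz, and the nearest points $v(t)$ so that all three remain inside $\Omega$ for $t\in[t_{1},t_{1}+\delta]$. This is exactly where the openness of $\Omega$ and the assumption that distances to $D$ are attained enter the argument; without the latter one cannot even define $v(t)$. The convexity of $D$ is not strictly needed for the core Dini estimate, but it guarantees uniqueness, and hence continuity, of $v(t)$, which is convenient if one wants the comparison to be genuinely pointwise rather than only along selected nearest points. A further routine but non-trivial ingredient is the comparison lemma invoked at the end — a continuous function with everywhere non-positive upper right Dini derivative is non-increasing — which is classical and should be cited rather than re-proved.
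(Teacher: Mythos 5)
The paper does not prove this proposition: it is quoted as Theorem 5.2 of \cite{Deimling} and used as a black box in the proof of Lemma \ref{lem-Invariance}, so there is no internal argument to compare yours against. What you wrote is the classical Nagumo--Brezis flow-invariance argument, and its core is sound: the $1$-Lipschitz property of $\rho(\cdot,D)$, the expansion $x(t+h)=x(t)+h\,f(t,x(t))+o(h)$, the comparison with a nearest point $v(t)\in\Omega\cap\partial D$ (which exists by proximinality and lies on $\partial D$ when $g(t)>0$), the subtangential hypothesis applied at the fixed pair $(t,v(t))$, and the resulting Dini inequality $D^{+}g\le L\,g$ closed by the $e^{-Lt}$ comparison lemma. (One quibble: uniqueness of nearest points requires strict convexity of the norm, not just convexity of $D$; since you never actually use continuity of $v(t)$, this is harmless.)

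The one step that does not go through as written is the localization when $t_{1}=0$. You require a single Lipschitz constant $L$ for $f(t,\cdot)$ on a ball $B$ around $x(t_{1})$, valid for all $t\in[t_{1},t_{1}+\delta]$; for $t_{1}>0$ this follows from local Lipschitz continuity at the point $(t_{1},x(t_{1}))$, but the hypothesis only makes $f$ locally Lipschitz on the open set $(0,a)\times\Omega$, so at $t_{1}=0$ no uniform $L$ is guaranteed (and $x'(0)$ is not assumed either). This is not merely cosmetic: with the hypotheses exactly as transcribed the conclusion can fail, e.g. $X=\Omega=\mathbb{R}$, $D=(-\infty,0]$, $f(t,u)=u/t$, which is locally Lipschitz on $(0,a)\times\mathbb{R}$ and satisfies the subtangential condition on $\partial D=\{0\}$ since $f(t,0)=0$, while $x(t)=t$ has $x(0)=0\in D$, solves $x'=f(t,x)$ on $(0,b)$, and leaves $D$ immediately. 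So the gap sits in the transcription of the hypotheses rather than in your strategy: under Deimling's actual assumptions, and in the paper's application where $f(t,u)=-\nabla J(u)$ is autonomous and locally Lipschitz on $H^1_g(M)^\fol$, a uniform $L$ near $x(0)$ is available, and your argument, run on the open interval $(t_{1},t_{1}+\delta)$ and closed up by continuity of $g$ at $t_{1}$, is complete.
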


\begin{proof}[Proof of Lemma \ref{lem-Invariance} part $(b)$]
By symmetry, we will only prove this part for
$\mathcal{P}^{\fol})$. The proof consists in the following two steps:

{\bf{Step 1:}} For $\alpha>0$ small enough, we apply \th\ref{T: Deimling}
    taking $X=\Omega=H_g^1(M)^\fol$, $\rho=\text{dist}_\theta$, and $D=\overline B_{\alpha}(\mathcal{P}^{\fol})$
    which is closed and
convex, and $f(t,u)=-\nabla J(u)$, to obtain that 
    \[
\psi(t,u)\in \overline B_{\alpha}(\mathcal{P}^{\fol})\text{ \ for all \ }t\in
(0,T(u))\text{ \ if }u\in \overline B_{\alpha}(\mathcal{P}^{\fol}).
\]

{\bf{Step 2:}} We then apply Mazur's separation theorem \cite[Section 2.2.19]{Megginson} to prove that 
\[
\psi(t,u)\in B_{\alpha}(\mathcal{P}^{\fol})\text{ \ for all \ }t\in
(0,T(u))\text{ \ if }u\in \partial \overline B_{\alpha}(\mathcal{P}^{\fol}).
\]

\medskip

{\bf{Proof of Step 1:}} Recall that the gradient of $J$ with respect to $\langle\cdot,\cdot\rangle_\theta$ is $\nabla J (u) = u-Lu-Gu$.  In order to apply 
 Proposition \ref{T: Deimling}, for $u\in\partial \overline{B}_\alpha(\mathcal{P}^\fol)$, we have to show that the 
  limit as $\lambda \searrow 0$ of the following expression equals zero:
\[
\begin{split}
&\lambda^{-1}\text{dist}_\theta\left(u+\lambda(-\nabla J(u)),\overline{B}_\alpha(\mathcal{P}^\fol)\right) \\
&= \lambda^{-1}\text{dist}_\theta\left((1-\lambda) u + \lambda(Lu+Gu),\overline{B}_\alpha(\mathcal{P}^\fol)\right)\\
&\leq \lambda^{-1}\left[ (1-\lambda)\text{dist}_\theta\left( u ,\overline{B}_\alpha(\mathcal{P}^\fol)\right) + \lambda\text{dist}_\theta \left(Lu+Gu,\overline{B}_\alpha(\mathcal{P}^\fol)\right) \right]\\
& = \text{dist}_\theta \left(Lu+Gu,\overline{B}_\alpha(\mathcal{P}^\fol)\right).
\end{split}
\]
This will be achieved by showing that  $Lu+Gu\in\,B_{\alpha}(\mathcal{P}^{\fol})$. In particular, we show that for some $\alpha_2 >0$ and any
$\alpha\in(0,\alpha_{2})$,  
\[
\text{dist}_{\theta}(Lu+Gu,\mathcal{P}^{\fol})< \text{dist}_{\theta}%
(u,\mathcal{P}^{\fol})\text{\qquad}\forall u\in \overline B_{\alpha}(\mathcal{P}%
^{\fol}).
\]
Since $\text{dist}_{\theta}$ is given by a norm, by the triangle inequality it is enough to bound $\text{dist}_{\theta}(Lu,\mathcal{P}^{\fol})$ and $\text{dist}_{\theta}(Gu,\mathcal{P}^{\fol})$ separately. 

\smallskip
We first note that $Lv\in\mathcal{P}^{\fol}$ and $Gv\in\mathcal{P}^{\fol}$ if $v\in\mathcal{P}^{\fol}$. This follows from the fact that $M$ is compact, $\theta>0$, $\theta-b\geq0$ and $c>0$ and the Maximum Principle.  Indeed consider $Lv\colon M\to \R$ and denote by $D(Lv)(p)$ the derivative of $Lv$ at $p\in M$. Since $M$ is compact, then $Lv$ attains it minimum at some point $p\in M$. If we assume that  $Lv(p)<0$, then since we also know that $D(Lv)(p) = 0$  and $-\Delta(Lv)(p)\leqslant 0$, we have $(\theta-b)v(p) = -\Delta(Lv)(p)+\theta Lv(p) <0$, which is a contradiction. The same reasoning holds for $Gv$.

As in \cite[Lemma 5.1]{ClappFernandez2017},  the following inequality holds true
\begin{equation}\label{Eq:Vetois}
\Vert Lu \Vert_\theta \leq \frac{\theta - \mu}{\theta + \mu}\Vert u\Vert_\theta.
\end{equation}
For $u\in H_g^{1}(M)^{\fol}$ let $v\in\mathcal{P}^{\fol}$ be such that dist$_{\theta}%
(u,\mathcal{P}^{\fol})=\left\Vert u-v\right\Vert_\theta.$ Then, the previous paragraph, linearity of $L$ and inequality \eqref{Eq:Vetois} yield
\begin{linenomath}
\begin{align}
\begin{split}
\text{dist}_{\theta}(Lu,\mathcal{P}^{\fol})&\leq\left\Vert Lu-Lv\right\Vert_{\theta} \label{eq:distLu}\\
&\leq \tfrac{\theta-\mu}{\theta + \mu} \left\Vert u-v\right\Vert_\theta\\
&= \tfrac{\theta-\mu}{\theta + \mu}\,\text{dist}_{\theta}(u,\mathcal{P}^{\fol}).
\end{split}
\end{align}
\end{linenomath}

Now we bound $\text{dist}_{\theta}(Gu,\mathcal{P}^{\fol})$. For any function
$v\colon M \to \R$ we defined $v^\pm$ as $v^{+}=\max\{0,v\}$ and $v^{-}=\min\{0,v\}$. Then, since for $u\in H^1_g(M)^\fol$ we have $(Gu)^{+}\in \mathcal{P}^\fol$, by the definition of $\text{dist}_{\theta}$, using that $\left\langle
G(u)^+,G(u)^{-}\right\rangle_\theta=0$ and identity \eqref{Eq:Lu and Gu}, we get 
\begin{linenomath}
\begin{align*}
  \text{dist}_{\theta}(Gu,\mathcal{P}^{\fol})\left\Vert G(u)^{-}\right\Vert
_{\theta} \leqslant 
& \Vert G(u)-G(u)^{+} \Vert_\theta \,\Vert G(u)^{-}\Vert_\theta \\
=&\left\Vert G(u)^{-}\right\Vert_\theta^{2}\\
=&\left\langle
G(u),G(u)^{-}\right\rangle_\theta\notag\\
  =&\int_{M}c\left\vert u\right\vert ^{p-2}uG(u)^{-}dV_g.
\end{align*}
\end{linenomath}
To bound the integral above, recall that $c>0$ and $G(u)^-\leq 0$. Thus, we can apply 
H\"{o}lder's inequality, with $\frac{p-1}{p}+\frac{1}{p}=1$, to the functions 
$c^{(p-1)/p}\vert u^-\vert^{p-1}$  and 
$c^{1/p}\vert G(u)^-\vert$,
and then apply
(\ref{eq:distP})  and Sobolev's inequality, to get 
\begin{linenomath}
\begin{align*}
&\int_{M}c\left\vert u\right\vert ^{p-2}uG(u)^{-}dV_g \\
&= \int_{\{u>0\}} c \left\vert u\right\vert ^{p-2}uG(u)^{-}dV_g+\int_{\{u<0\}} c \left\vert u\right\vert ^{p-2}uG(u)^{-}dV_g\\
&\leq  \int_{\{u<0\}} c \left\vert u\right\vert ^{p-2}uG(u)^{-}dV_g\\
&= \int_{M}c\vert u^{-}\vert^{p-2}u^{-}%
G(u)^{-}\,dV_{g}\\
 &\leq \vert u^{-}\vert _{c,p}^{p
-1}\vert G(u)^{-}\vert _{c,p}\\
&\leq C^{p}\,\text{dist}_{\theta}(u,\mathcal{P}^{\fol})^{p-1}\Vert
G(u)^{-}\Vert_\theta.
\end{align*}
\end{linenomath}
Combining the inequalities above, we conclude that for all $ u\in H_{g}^{1}(M)^{\fol}$ with $\Vert Gu^{-}\Vert\neq 0$ it holds
\begin{equation} \label{eq:distGu}
\text{dist}_{\theta}(Gu,\mathcal{P}^{\fol})\leq C^{p}\,\text{dist}%
_{\theta}(u,\mathcal{P}^{\fol})^{p-1}. %
\end{equation}
This inequality is also true if $\Vert Gu^{-}\Vert= 0$, for, in this case $Gu=Gu^+$ and $\text{dist}_\theta(Gu,\mathcal{P}^\fol)=0$. So, inequality \eqref{eq:distGu} holds true for every $u\in H_g^1(M)^\fol.$

Fix $\nu\in\left( \tfrac{\theta-\mu}{\theta + \mu},1\right)  $ and let $\alpha_{2}>0$ be such
that $C^{p}\alpha_{2}^{p-2}  \leq\nu-\left(\tfrac{\theta-\mu}{\theta + \mu}\right)$. Then, for any
$\alpha\in(0,\alpha_{2}),$ by adding inequalities (\ref{eq:distLu}) and
(\ref{eq:distGu}) we obtain 
\[
\text{dist}_{\theta}(Lu+Gu,\mathcal{P}^{\fol})\leq\nu\,\text{dist}_{\theta}%
(u,\mathcal{P}^{\fol})<\text{dist}_\theta(u,\mathcal{P}^\fol),
\]
for all $u\in \overline B_{\alpha}(\mathcal{P}^{\fol})$. 
Therefore, $Lu+Gu\in\,B_{\alpha}(\mathcal{P}^{\fol})$ if $u\in \overline B_{\alpha
}(\mathcal{P}^{\fol})$.

\medskip
{\bf{Proof of Step 2:}}
Observe that we have not ruled out $\psi(u,t)\in \partial B_\alpha(\mathcal{P}^\fol)$. We now argue as in the proof of Lemma 2 in \cite{ClappWeth2005}. 

By contradiction, assume that there exists 
$u\in \overline B_\alpha(\mathcal{P}^\fol)$ such that 
$\psi(u,t)\in \partial B_\alpha(\mathcal{P}^\fol)$ for some $t \in (0, T(u))$. By Mazur's separation theorem, there exists a continuous linear functional $R \in (H^1_g(M)^\fol)^*$ and $\beta>0$ such that $R(\psi(u,t))-\beta$ and $R(v)> \beta$
for any $v\in B_\alpha(\mathcal P^\fol)$. It follows that 
\begin{linenomath}
\begin{align*}
    \tfrac{\partial}{\partial s}\big|_{s=t}R(\psi(u,s)) = 
    & R(-\nabla  J \psi(u,t)) \\
    & = R((L+G)\psi(u,t))-R(\psi(u,t)).
\end{align*}
\end{linenomath}
Since by hypothesis $\psi(u,t)\in \partial B_\alpha(\mathcal P^\fol)\subset \overline B_\alpha(\mathcal P^\fol)$, then by Step 1 above $(L+G)(\psi(u,t))\in B_\alpha(\mathcal P^\fol)$. Thus we obtain that
\begin{linenomath}
\[
    R((L+G)\psi(u,t))-R(\psi(u,t))= R((L+G)\psi(u,t))-\beta >0,
\]
\end{linenomath}
and consequently $\tfrac{\partial}{\partial s}\big|_{s=t}R(\psi(u,s)) >0$. Hence, there exists $\varepsilon>0$ such that $R(\psi(u,s))<\beta$ for $s\in (t-\varepsilon, t)$. Thus, $\psi(u,s) \not \in \overline B_\alpha(\mathcal{P}^\fol)$ for $s\in (t-\varepsilon, t)$. This contradicts Step 1. 

\medskip
Letting $\alpha_0:=\min\{\alpha_1,\alpha_2\}$ we conclude the proof of the lemma. \end{proof}

\bibliographystyle{siam}
\bibliography{Bibliography.bib}
\end{document}